\documentclass[11pt]{amsart}

\usepackage[usenames,dvipsnames,svgnames,table]{xcolor}
\usepackage[hyphens]{url}
\usepackage[pagebackref,linktocpage=true,colorlinks=true,linkcolor=Blue,citecolor=BrickRed,urlcolor=RoyalBlue]{hyperref}
\usepackage[msc-links,abbrev]{amsrefs}
\usepackage{amsmath,amsthm,amssymb}
\usepackage{mathtools,mathrsfs}
\usepackage{booktabs,placeins}

\newtheorem{theorem}{Theorem}[section]
\newtheorem{lemma}[theorem]{Lemma}
\newtheorem{proposition}[theorem]{Proposition}

\theoremstyle{definition}
\newtheorem{note}[theorem]{Note}

\DeclareMathOperator{\tr}{tr\vphantom{g}}
\DeclareMathOperator{\diver}{div}

\DeclareMathOperator{\sgn}{sgn}
\DeclareMathOperator{\dist}{dist}

\newcommand{\R}{\mathbf R}
\newcommand{\Sph}{\mathbf S}
\newcommand{\B}{\mathbf B}
\newcommand{\cE}{\mathcal E}
\newcommand{\bcE}{\overline{\cE}}
\newcommand{\eps}{\varepsilon}
\newcommand{\1}{\mathbf 1}
\newcommand{\C}{\mathcal{C}}

\newcommand{\M}{\mathcal M}
\newcommand{\CC}{\mathcal C}

\renewcommand{\tilde}{\widetilde}

\title[The isoperimetric inequality and CMC hypersurfaces]
{The isoperimetric inequality and CMC hypersurfaces\\in Cartan--Hadamard manifolds}

\author{Shibing Chen}
\address{School of Mathematical Sciences, University of Science and Technology of China, Hefei, Anhui 230026, China}
\email{chenshib@ustc.edu.cn}

\author{Mohammad Ghomi}
\address{School of Mathematics, Georgia Institute of Technology,
Atlanta, GA 30332}
\email{ghomi@math.gatech.edu}
\urladdr{ghomi.math.gatech.edu}

\author{Peng Wang}
\address{School of Mathematics and Statistics, FJKLAMA, Key Laboratory of Analytical Mathematics and Applications, Fujian Normal University, Fuzhou, China}
\email{pengwang@fjnu.edu.cn,\;netwangpeng@163.com}

\date{\today\,(Last Typeset)}
\subjclass[2020]{Primary 53C20, 53C42; Secondary 49Q20, 53C24.}

\keywords{Cartan--Hadamard conjecture, isoperimetric inequality,
constant mean curvature, Jacobi fields, Green function,  isoperimetric profile, singularity.}

\begin{document}

\begin{abstract}
We show that the sharp Euclidean isoperimetric inequality holds for domains in complete
simply connected Riemannian $n$-manifolds of nonpositive sectional
curvature, $3\leq n\leq9$, which establishes the Cartan--Hadamard conjecture in these dimensions. The main
step is a sharp inequality for constant-mean-curvature hypersurfaces,
proved via integrals over pairs of boundary points,  together with estimates for Jacobi fields along geodesic
chords of the boundary. The weights in these integrals depend on the
length of the chord and  its angles with the boundary, and are chosen dimension by dimension, with 
a Green function pole. 
The inequality
persists for boundaries of isoperimetric regions trapped in geodesic balls, whose
mean curvature is constant only on the free part and which may have
singularities. The isoperimetric-profile argument  of Kleiner and Ghomi--Spruck completes the proof. 
\end{abstract}

\maketitle

\section{Introduction}

A Cartan--Hadamard manifold $M^n$ is a complete simply connected
Riemannian $n$-space of nonpositive sectional curvature. The
Cartan--Hadamard conjecture \cite{aubin1976,burago-zalgaller1988,gromov1999}
asserts that regions in $M^n$ satisfy the Euclidean
isoperimetric inequality. We prove this in the following dimensions. Let $\B^n$
be the unit ball in Euclidean space $\R^n$, and $\Sph^{n-1}:=\partial \B^n$ be the unit sphere.

\begin{theorem}\label{thm:main}
Let $\Omega\subset M^n$, $3\leq n\leq9$, be a bounded set, and
$\Gamma\coloneqq\partial\Omega$. Suppose that the perimeter $|\Gamma|$
is finite and the volume $|\Omega|>0$. Then
\begin{equation}\label{eq:main}
\frac{|\Gamma|^n}{|\Omega|^{n-1}}
\geq
\frac{|\Sph^{n-1}|^n}{|\B^n|^{n-1}},
\end{equation}
with equality only if $\Omega$ is isometric to a Euclidean ball.
\end{theorem}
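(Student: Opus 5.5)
The plan is to follow the isoperimetric-profile scheme of Kleiner and Ghomi--Spruck, and to reduce Theorem~\ref{thm:main} to a sharp inequality for hypersurfaces of constant mean curvature.

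\textbf{Reduction to the profile.} Fix a geodesic ball $B_R\subset M$ containing $\Omega$. Consider the isoperimetric profile of $B_R$:
\[
I_R(v):=\inf\{|\partial D| : D\subset B_R,\ |D|=v\}.
\]
By compactness in BV and standard regularity theory, minimizers $D_v$ exist. Their boundary $\partial D_v$ has a free part $\partial D_v\setminus\partial B_R$, which is a smooth CMC hypersurface away from a singular set of codimension at least $8$ (this is where $n\le 9$ enters, together with the dimension-dependent weights). On the contact part $\partial D_v\cap \partial B_R$, which is $C^{1,1}$, the mean curvature is bounded by that of $\partial B_R$. Comparing $I_R$ with the Euclidean profile $c_n v^{(n-1)/n}$, we have $I_R(v)\sim c_n v^{(n-1)/n}$ as $v\to 0$. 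The aim is to show that $I_R^{n/(n-1)}$ satisfies a differential inequality in the viscosity sense: at a minimizer, the one-sided derivatives of $I_R$ are controlled by the free mean curvature $H$. This reduces the problem to the pointwise bound
\[
H\,|\partial D|\ \ge\ (n-1)\,|\Sph^{n-1}|^{1/(n-1)}\,|\partial D|^{(n-2)/(n-1)},
\]
or equivalently $H\ge H_{\mathrm{ball}}(|\partial D|)$. This is the Ghomi--Spruck form. Integrating the resulting ODE comparison from $v=0$ then yields $I_R(v)\ge c_n v^{(n-1)/n}$ for every $v<|B_R|$, and hence \eqref{eq:main}.

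\textbf{Key CMC inequality.} For $\Gamma=\partial D$ with $D$ as above, I would prove the bound via a double integral over pairs $(x,y)\in\Gamma\times\Gamma$ joined by the geodesic chord $\gamma_{xy}$ in $M$ (geodesics are unique in a Cartan--Hadamard manifold). The integrand is a weight $\phi(r,\cos\theta_x,\cos\theta_y)$, where $r=d(x,y)$ and $\theta_x,\theta_y$ are the angles of the chord with the normals. Differentiating in $x$ along $\Gamma$ and applying the divergence theorem on $\Gamma$ produces the mean curvature $H(x)$ through the first variation. It also produces Hessian terms of $r$, and these are controlled by Jacobi field comparison: nonpositive curvature gives $\nabla^2 r\ge \frac1r(g-dr^2)$, with equality in $\R^n$. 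The weight $\phi$ is chosen with a Green-function singularity $r^{2-n}$ at the diagonal. The point of this choice is that the local contribution near $x=y$ yields exactly the sharp constant $|\Sph^{n-1}|$, while every remaining term has a sign. The sign follows from the comparison inequalities provided $\phi$ satisfies a finite set of monotonicity and convexity conditions in $(r,\cos\theta)$, which I would check for each $n\in\{3,\dots,9\}$. On the contact part, $H\le H_{\partial B_R}$ is only an inequality, but it has the correct sign for the estimate. Near singular points, a capacity cutoff suffices because the singular set has codimension $\ge 7$ in $\Gamma$.

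\textbf{Main obstacle and equality.} The hardest step is constructing $\phi$ so that all Jacobi-field error terms are nonnegative with a sharp constant. I would first try $\phi=r^{2-n}f(\cos\theta_x)f(\cos\theta_y)$ and reduce admissibility to one-variable inequalities for $f$, checking these dimension by dimension.

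For the equality case: equality in \eqref{eq:main} forces equality in the Jacobi comparison along almost every chord. This makes the curvature vanish on the convex hull of $\Omega$, and together with $H$ being constant it forces $\Omega$ to be a flat round ball.
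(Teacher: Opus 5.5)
\textbf{Verdict.} Your reduction to the isoperimetric profile is the one the paper uses (Kleiner, Ghomi--Spruck). The key CMC step, however, which is where the substance lies, has a genuine gap: as sketched it would not go through.

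\textbf{The weight cannot depend on the angle at the varied endpoint.} If your weight depends on $\cos\theta_x$, the incidence cosine at the endpoint you differentiate, the divergence contains
$$\langle\nabla_\Gamma u,\nabla_\Gamma r_q\rangle=\nabla^2r_q(\nabla_\Gamma r_q,\nu_p)+\mathrm{II}_p(\nabla_\Gamma r_q,\nabla_\Gamma r_q).$$
This involves a single normal curvature of $\Gamma$, about which the CMC hypothesis says nothing. A full Laplacian would add third derivatives of $r$ and $\Delta_\Gamma$ of the angles, and no comparison theorem controls these. The paper avoids this in two ways. It lets only $\partial_r\psi=r^{2-n}c(r,v)$ depend on $r$ and on the angle $v$ at the \emph{fixed} endpoint. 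It then integrates the divergence of the radial field $X_q=\partial_r\psi\,\nabla_\Gamma r_q$ alone. That divergence involves only $r,u,v,H,\tr_\Gamma\cE_p$ and $w=\langle\tilde A\tilde\nu_p,\tilde\nu_q\rangle$, with $|w|\le h$.

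Note also that $\Gamma$ is $(n-1)$-dimensional. So it is $\partial_r\psi$, not the potential, that must behave like $r^{2-n}$; otherwise the flux through $\Gamma\cap\partial B_\eps$ diverges.

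\textbf{One identity does not produce the sharp constant.} The local pole contributes $C_\psi|\Sph^{n-2}|$, not $|\Sph^{n-1}|$. In the paper, a single identity plus a constant $\kappa$ (that is, $\phi=0$) suffices only for $n=3$. For $4\le n\le9$ it adds a second, global identity, the degree identity
$$\int_{\Gamma\times\Gamma}\frac{v\phi(u)+u\phi(v)}{J}=2I_\phi|\Sph^{n-2}||\Gamma|.$$
This comes from counting signed crossings of geodesic rays from $p$, so it uses that $\Gamma$ bounds $\Omega$. The sharp constant then comes from the mass balance $2|\Sph^{n-2}|(C_\psi-I_\phi)=\kappa|\Sph^{n-1}|$.

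\textbf{Hessian comparison alone no longer gives signs.} Once the degree term is present, $\nabla^2r\geq\frac1r(g-dr\otimes dr)$ does not make every term nonnegative. In curved space the degree term falls short of its Euclidean value by $(1-r^{n-1}/J)(v\phi(u)+u\phi(v))$, which is positive when $u,v>0$. Compensating for this needs a new Jacobi-field estimate that bounds the Jacobian defect by the Hessian excess at both endpoints. This is Proposition~\ref{lem:jacobi-endpoint}:
$$\lambda\tr_\Gamma\cE_q+\mu\tr_\Gamma\cE_p\geq\frac{2|uv|\sqrt{\lambda\mu}}{r}\Big(1-\frac{r^{n-1}}{J}\Big).$$
For $n=8,9$ the paper needs the sharper joint estimate $\tr(\CC\M)\geq\lambda(\CC)(1-r^{n-1}/J)$ (Proposition~\ref{thm:joint}), which also absorbs the $\tilde A$-dependence of $w$.

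\textbf{The existence of weights is the whole difficulty.} ``A finite set of monotonicity and convexity conditions'' on $f$ is not an argument. One must exhibit weights for which the combined integrand is nonnegative in every Cartan--Hadamard manifold and vanishes only when $u=v=r/2$ and $J=r^{n-1}$. The paper's weights are explicit, chosen dimension by dimension, and certified by exact positivity checks.

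\textbf{The zero set drives rigidity.} This zero set is also what your equality sketch leaves out. Convexity of $\Omega$ follows from $u=v=r/2$, flatness from $J=r^{n-1}$, and roundness from Alexandrov's theorem. For $n=8,9$ one must first show that the singular set is empty.

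\textbf{Smaller points.}
\begin{itemize}
\item On the contact set the relevant inequality is $H\le H_0$, not a bound by the mean curvature of $\partial B_R$.
\item Its favourable sign in the integral needs $u(p,q)\geq0$ at contact points $p$, which comes from convexity of geodesic balls.
\item Cutting off across the singular set also needs the density bound $|\Gamma\cap B_\rho|\le C\rho^{n-1}$ and integrability of $(u^2+v^2)/r^{n-1}$.
\item The restriction $n\le9$ does not enter through regularity. It comes only from the range in which calibrations are exhibited.
\end{itemize}
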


The Cartan--Hadamard conjecture had been proved earlier, via different methods, only in dimensions $n=2$, $3$, $4$, and $5$
by Weil \cite{weil1926}, Kleiner \cite{kleiner1992}, Croke
\cite{croke1984}, and the authors \cite{chen-ghomi-wang} respectively.  Theorem~\ref{thm:main} gives a unified proof of these results for $n\geq 3$ by refining the approach in  \cite{chen-ghomi-wang}.  The main step  is again a sharp
inequality for constant-mean-curvature (CMC) hypersurfaces:

\begin{theorem}\label{thm:CMC}
Let $\Gamma^{n-1}\subset M^n$, $3\leq n\leq9$, be a smooth compact
embedded hypersurface whose mean curvature $H$ is a positive constant.
Then
\begin{equation}\label{eq:CMC}
|\Gamma|\geq\left(\frac{n-1}{H}\right)^{n-1}|\Sph^{n-1}|.
\end{equation}
Equality holds only if $\Gamma$ bounds a Euclidean ball of radius
$(n-1)/H$.
\end{theorem}

The above result may be regarded as the analogue of Alexandrov's celebrated sphere theorem in $\R^n$ \cite{alexandrov1962}. As in \cite{chen-ghomi-wang}, the proof is based on combining two integral identities over $\Gamma\times\Gamma$: a Minkowski--Green identity and a degree identity. Here the Minkowski--Green identity uses weights which depend not only on the lengths of chords of $\Gamma$ but also on their angles with $\Gamma$, and the degree identity employs more general weight functions as well. Another major difference with \cite{chen-ghomi-wang} is that isoperimetric regions may be singular in dimensions $n\geq 8$; however, we show that our identities persist. 

In
Section~\ref{sec:jacobi-section} we begin the study of the chords of $\Gamma$ and their Jacobi fields.
Sections~\ref{sec:comparisons} and~\ref{subsec:joint} prove
two comparison estimates for these fields, which extend the estimate in  \cite{chen-ghomi-wang}. First we establish an
endpoint comparison which bounds the Jacobian of the exponential
map along a chord by the Hessians of the distance function at its
endpoints. Then we sharpen that result by taking into account
the effect of curvature on the angles between a chord and $\Gamma$. Section~\ref{sec:two-point} establishes the
two integral identities, and Section~\ref{sec:singular-section}
extends them to the boundaries of isoperimetric regions trapped inside geodesic balls.

In Section~\ref{sec:CMC} we combine the two identities through a
calibration scheme and establish our reduction result
(Theorem~\ref{thm:general-reduction}): a calibration in dimension
$n$, that is, a choice of weights for which a certain pointwise
inequality holds along every chord, yields both \eqref{eq:main} and \eqref{eq:CMC} in that dimension. The CMC inequality \eqref{eq:CMC}
follows quickly by integration, and its
equality case gives the rigidity; the isoperimetric inequality \eqref{eq:main}
then follows through the profile method of Kleiner
\cite{kleiner1992} and Ghomi--Spruck \cite{ghomi-spruck2022},
which reduces it to the CMC inequality for the isoperimetric regions. Section~\ref{sec:beyond-seven} exhibits
the calibrations for $3\leq n\leq9$, using the endpoint estimates, and completes the proofs.

The calibrations are quite simple in dimensions $3$ and $5$ but require longer computations in other dimensions, which are included in Appendix~\ref{app:calibrations} and certified in  the accompanying Mathematica notebook \cite{chen-ghomi-wang-calibrations}.  We have verified that calibrations exist at least up to dimension $13$, but do not include those computations here due to their length. 
In a sequel  \cite{chen-ghomi-wang-generalized}, we will show that our methods can be refined to obtain the generalized form of the Cartan--Hadamard conjecture in dimensions $n=4$ and $5$: when the curvature of $M$ is bounded above by a negative constant, inequality \eqref{eq:main} holds in terms of balls in the corresponding hyperbolic space. This has been known only in dimensions 2 \cite{beckenbach-rado1933} and 3 \cite{kleiner1992}. See \cite{ghomi-stavroulakis2026,ghomi2026-nullity} for recent partial results which hold in all dimensions and \cite{ghomi-spruck2022,kloeckner-kuperberg2019,ritore2023} for more references.

\section{Basic Chord Data}\label{sec:jacobi-section}

Let $\Gamma$ be a closed embedded \emph{smooth} ($\C^\infty$)
hypersurface in $M$. Then
$\Gamma$ bounds a unique domain $\Omega$ with compact closure,
and we let $\nu$ be the outward unit normal of $\Gamma$ with respect to $\Omega$. Any two points of $\Gamma$ are joined
by a unique geodesic, which we call a \emph{chord}. We begin by recording some basic facts about these chords.

\subsection{Incidence cosines}\label{sec:chords}
Fix distinct points $p,q\in\Gamma$, let $\gamma\colon[0,r]\to M$
be the unit speed geodesic from $p$ to $q$, and set
$$
r(p,q)\coloneqq\dist(p,q),\qquad
u\coloneqq\langle\gamma'(0),-\nu_p\rangle,\qquad
v\coloneqq\langle\gamma'(r),\nu_q\rangle,
$$
where $\nu_p\coloneqq\nu(p)$ and $\nu_q\coloneqq\nu(q)$, so that
$-1\leq u,v\leq1$. Thus $u$ is the cosine of the angle between
the directed chord $pq$ and the inward normal at $p$, and $v$ is
the cosine of the angle between $pq$ and the outward normal at
$q$; we call $u$ and $v$ the \emph{incidence cosines} of the
chord.

The distance function
$\dist\colon M\times M\to\R$ is smooth off the diagonal, since $M$ is
Cartan--Hadamard, and we write $r_q\coloneqq\dist(\cdot,q)$,
$r_p\coloneqq\dist(p,\cdot)$. By the first-variation formula,
$\nabla r_q(p)=-\gamma'(0)$ and $\nabla r_p(q)=\gamma'(r)$, so
\begin{equation}\label{eq:u-v}
u=\langle\nabla r_q(p),\nu_p\rangle,
\qquad v=\langle\nabla r_p(q),\nu_q\rangle .
\end{equation}
The tangential gradients are
$\nabla_\Gamma r_q=\nabla r_q-u\nu_p$ and
$\nabla_\Gamma r_p=\nabla r_p-v\nu_q$, and hence
\begin{equation}\label{eq:tangential-gradients}
|\nabla_\Gamma r_q|^2=1-u^2,
\qquad
|\nabla_\Gamma r_p|^2=1-v^2.
\end{equation}

\subsection{Jacobi fields}\label{sec:jacobi-fields}
Along $\gamma$ fix an orthonormal frame $E_1(t),\dots,E_{n-1}(t)$
of $\gamma'(t)^\perp$ which is parallel in $t$; through it, every
normal space $\gamma'(t)^\perp$ is identified with $\R^{n-1}$. With
$R$ the Riemann curvature tensor of $M$, let
$$
K_{ij}(t)
\coloneqq\Big\langle
R\big(E_j(t),\gamma'(t)\big)\gamma'(t),\,E_i(t)\Big\rangle.
$$
This $(n-1)\times(n-1)$ matrix is symmetric, and it is
nonpositive, because for $X=\sum_i\xi^iE_i(t)$, we have
$
\xi^TK(t)\xi=\langle R(X,\gamma')\gamma',X\rangle
=\operatorname{sec}_M(X\wedge\gamma')|X|^2\leq0.
$
We write $A\geq B$ for symmetric matrices if $A-B$
is positive semidefinite. So $K\leq0$.

A vector field $Z$ along $\gamma$ with $Z(t)\perp\gamma'(t)$ for
all $t$ becomes, in the frame, a function
$Z\colon[0,r]\to\R^{n-1}$, and the Jacobi equation
$Z''+R(Z,\gamma')\gamma'=0$ becomes
$
Z''+KZ=0.
$
The Jacobi fields normal to $\gamma$ which vanish at $p$ are
therefore $Z(t)=A(t)Z'(0)$, with
\begin{equation}\label{eq:angular-jacobi}
A''(t)+K(t)A(t)=0,\qquad A(0)=0,\qquad A'(0)=I_{n-1}.
\end{equation}

Let $\Sph_p$ be the unit sphere in $T_pM$ and $\theta\in\Sph_p$.
Varying the direction $\theta$ of the geodesic $\exp_p(t\theta)$
in a direction $\xi\in T_\theta\Sph_p=\theta^\perp$ produces the
Jacobi field $Z_\xi$ with $Z_\xi(0)=0$ and $Z_\xi'(0)=\xi$, which
in the frame is $A(t)\xi$. Consequently the volume element of $M$
in geodesic polar coordinates about $p$ is
$
dV=\det A(t)\,dt\,d\theta,
$
and we call
$$
J(p,q)\coloneqq\det A\big(r(p,q)\big)
$$
the \emph{polar Jacobian}: it is the Jacobian, at
$\theta=\gamma'(0)$, of the map $\theta\mapsto\exp_p(r\theta)$
from $\Sph_p$ onto the geodesic sphere of radius $r=r(p,q)$ about
$p$; in $\R^n$ it equals $r^{n-1}$. As the curvature of $M$ is
nonpositive, Rauch's comparison theorem gives
\begin{equation}\label{eq:B-lower-bound}
|A(t)\xi|\geq t|\xi|,
\qquad
J\geq r^{n-1}.
\end{equation}

\subsection{Derivative of the incidence cosines}\label{sec:angles}
When $q$ is displaced by a vector $Y\in T_qM$ orthogonal to the
chord $pq$, the direction of the chord at $p$ turns by $A(r)^{-1}Y$,
with $Y$ expressed in the frame $E_i(r)$, since $A(r)^{-1}$ is
the derivative of the radial projection from $p$; in $\R^n$ this
is $Y/r$. Accordingly we set
\begin{equation}\label{eq:W}
\tilde A\coloneqq rA(r)^{-T},\qquad\text{so that}\qquad
\det \tilde A=\frac{r^{n-1}}{J},
\end{equation}
and $\|\tilde A\|\leq1$ by \eqref{eq:B-lower-bound}, where $\|\cdot\|$
is the operator norm. The polar Jacobian is symmetric, $J(p,q)=J(q,p)$
\cite[Lem.~5]{yau1975}; under reversal of the chord, $\tilde A$ is
replaced by $\tilde A^T$ in the parallel identifications.
Put
$$
\tilde\nu_p\coloneqq\nu_p-u\nabla r_q(p),\qquad
\tilde\nu_q\coloneqq\nu_q-v\nabla r_p(q),\qquad
w\coloneqq\langle \tilde A\tilde\nu_p,\tilde\nu_q\rangle .
$$
Thus $w$ is the inner product of the components $\tilde\nu_p$,
$\tilde\nu_q$ of the normals transverse to the chord, after
$\tilde\nu_p$ is carried to $q$ by $\tilde A$. Since
$\tilde A$ is replaced by $\tilde A^T$ under reversal of the
chord, $w(p,q)=w(q,p)$. Since $|\tilde\nu_p|^2=1-u^2$,
$|\tilde\nu_q|^2=1-v^2$, and $\|\tilde A\|\leq1$, the
Cauchy--Schwarz inequality gives
\begin{equation}\label{eq:zeta-bound-flat}
|w|\leq h\coloneqq\sqrt{(1-u^2)(1-v^2)}
\leq h_0\coloneqq1-\frac{u^2+v^2}{2}.
\end{equation}

\begin{lemma}\label{lem:angular}
With $p$ fixed and $q$ varying on $\Gamma\setminus\{p\}$,
\begin{equation*}
\nabla_{\Gamma}u=-\frac1r(\tilde A\tilde\nu_p)^\top,\qquad
\left\langle\nabla_{\Gamma}u,\nabla_{\Gamma}r_p\right\rangle=\frac vrw .
\end{equation*}
With
$q$ fixed and $p$ varying,
$
\langle\nabla_\Gamma v,\nabla_\Gamma r_q\rangle=\frac urw.
$
\end{lemma}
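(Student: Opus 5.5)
We will compute the derivative of $u$ by differentiating the initial direction of the chord as $q$ moves, using the description of $A(r)^{-1}$ as the derivative of the radial projection from $p$.

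\emph{Step 1: the derivative of the initial direction.} Write $u(q)=\langle\theta(q),-\nu_p\rangle$, where $\theta(q)\coloneqq\gamma'(0)=\exp_p^{-1}(q)/r\in\Sph_p$. Let $Y\in T_qM$ be a displacement of $q$.
\begin{itemize}
\item The radial component $\langle Y,\gamma'(r)\rangle\gamma'(r)$ does not change $\theta$.
\item For the component $Y^\perp$ orthogonal to the chord, the Jacobi field $A(t)\xi$ with $A(r)\xi=Y^\perp$ gives $d\theta(Y)=\xi=A(r)^{-1}Y^\perp$, as stated before \eqref{eq:W}.
\end{itemize}
Here $A(r)^{-1}Y^\perp$ is read in the frame at $t=0$, that is, in $\gamma'(0)^\perp$. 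Since $\nu_p$ does not depend on $q$,
\[
du(Y)=-\langle A(r)^{-1}Y^\perp,\nu_p\rangle=-\langle Y^\perp, A(r)^{-T}\nu_p^\perp\rangle .
\]
The component of $\nu_p$ orthogonal to $\gamma'(0)$ is $\nu_p-\langle\nu_p,\gamma'(0)\rangle\gamma'(0)$. By \eqref{eq:u-v}, $\nabla r_q(p)=-\gamma'(0)$ and $u=\langle \nabla r_q(p),\nu_p\rangle$, so this component equals $\nu_p-u\nabla r_q(p)=\tilde\nu_p$. Therefore
\[
du(Y)=-\tfrac1r\langle Y,\tilde A\tilde\nu_p\rangle,
\]
where $\tilde A\tilde\nu_p\in\gamma'(r)^\perp$ is transported to $q$ through the parallel frame. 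This identifies $\nabla u=-\frac1r\tilde A\tilde\nu_p$ in $T_qM$. Projecting onto $T_q\Gamma$ gives $\nabla_\Gamma u=-\frac1r(\tilde A\tilde\nu_p)^\top$.

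\emph{Step 2: the pairing with $\nabla_\Gamma r_p$.} By \eqref{eq:u-v}, $\nabla r_p(q)=\gamma'(r)$ and $\nabla_\Gamma r_p=\gamma'(r)-v\nu_q$. Because $\nabla_\Gamma r_p$ is tangent to $\Gamma$, we may drop the projection $\top$ in the pairing. Since $\tilde A\tilde\nu_p\perp\gamma'(r)$,
\[
\langle\nabla_\Gamma u,\nabla_\Gamma r_p\rangle=-\tfrac1r\langle\tilde A\tilde\nu_p,\gamma'(r)-v\nu_q\rangle=\tfrac vr\langle \tilde A\tilde\nu_p,\nu_q\rangle .
\]
Again because $\tilde A\tilde\nu_p\perp\gamma'(r)$, we may replace $\nu_q$ by its transverse part $\tilde\nu_q=\nu_q-v\gamma'(r)$. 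This gives $\frac vr w$.

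\emph{Step 3: the reversed chord.} Fix $q$ and let $p$ vary. We apply the same computation to the reversed chord $\bar\gamma(t)=\gamma(r-t)$, which runs from $q$ to $p$. For this chord the roles of the data change as follows:
\begin{itemize}
\item The incidence cosine at the starting point $q$ is $\langle\bar\gamma'(0),-\nu_q\rangle=\langle \gamma'(r),\nu_q\rangle=v$, so $v$ plays the role of $u$.
\item The cosine at the endpoint $p$ is $\langle\bar\gamma'(r),\nu_p\rangle=\langle-\gamma'(0),\nu_p\rangle=u$, so $u$ plays the role of $v$.
\item $\tilde A$ is replaced by $\tilde A^T$, as noted after \eqref{eq:W}.
\end{itemize}
Step 2 then yields $\frac ur\langle\tilde A^T\tilde\nu_q,\tilde\nu_p\rangle=\frac ur w$, using the symmetry $w(p,q)=w(q,p)$.

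The only delicate point is the sign and transpose bookkeeping in Step 1. We must check that $A(r)^{-1}$, acting from $\gamma'(r)^\perp$ to $\gamma'(0)^\perp$, has adjoint $A(r)^{-T}$ in the parallel frame, so that $\tilde A=rA(r)^{-T}$ carries $\tilde\nu_p$ to $q$. We must also confirm the sign conventions in the reversal: $\bar\gamma'(0)=-\gamma'(r)$, and the inward normal at $q$ is $-\nu_q$.
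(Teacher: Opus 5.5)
Your proposal is correct and follows the paper's own proof essentially step for step. You differentiate $u=-\langle P_p(q),\nu_p\rangle$ using $dP_p(Y)=A(r)^{-1}Y^\perp$, identify the transverse part of $\nu_p$ with $\tilde\nu_p$ to get $\nabla u=-\frac1r\tilde A\tilde\nu_p$, pair with $\nabla_\Gamma r_p=\gamma'(r)-v\nu_q$ using $\tilde A\tilde\nu_p\perp\gamma'(r)$, and obtain the last formula by reversing the chord, which replaces $\tilde A$ by $\tilde A^T$. Your sign and transverse-normal bookkeeping under that reversal is also correct.
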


\begin{proof}
Let $P_p(q)\coloneqq\exp_p^{-1}(q)/r$. For an endpoint variation $Y\in
T_qM$, the corresponding angular Jacobi field gives
$
dP_p(Y)=A(r)^{-1}Y^\perp,
$
where $Y^\perp$ is the component of $Y$ orthogonal to $\gamma'(r)$.
Since $u=-\langle P_p(q),\nu_p\rangle$, differentiation gives
$d_qu(Y)=-\langle A(r)^{-T}\tilde\nu_p,Y\rangle$. Restricting to
$T_q\Gamma$ proves the first formula; the second follows from
\eqref{eq:tangential-gradients}, since $\nabla_\Gamma r_p=\nabla
r_p(q)-v\nu_q$ and $\tilde A\tilde\nu_p\perp\gamma'(r)$. The last
assertion follows by reversing the chord, which replaces $A(r)$ by
$A(r)^T$.
\end{proof}

\section{Endpoint Comparison I}\label{sec:comparisons}\label{subsec:jacobi}

The curvature of $M$ affects the chords of $\Gamma$ through the polar
Jacobian $J$, which exceeds $r^{n-1}$ by \eqref{eq:B-lower-bound},
and the Hessian of $r$ at the end points $p$ and $q$. Here we compare these two effects. On $T_pM$ and $T_qM$ let
\begin{equation}\label{eq:hessian-remainders}
\cE_p\coloneqq\nabla^2r_q(p)-\frac{g-dr_q\otimes dr_q}{r},
\qquad
\cE_q\coloneqq\nabla^2r_p(q)-\frac{g-dr_p\otimes dr_p}{r},
\end{equation}
with $g$ the metric of $M$; these symmetric forms vanish
identically in $\R^n$, where
$\nabla^2r_q=\frac1r(g-dr_q\otimes dr_q)$. Since
$\nabla r_q(p)=-\gamma'(0)$ lies in the kernel of both terms of
$\cE_p$, the form $\cE_p$ is determined by its restriction to
$\gamma'(0)^\perp$; we write $\bcE_p$ for the matrix of that
restriction in the frame $E_i(0)$, which is the matrix of
$\nabla^2r_q(p)$ on $\gamma'(0)^\perp$ minus $I_{n-1}/r$, and
likewise $\bcE_q$ for the matrix of $\cE_q$ on $\gamma'(r)^\perp$.
Hessian comparison gives $\cE_p,\cE_q\geq0$. In the frame
$E_i(t)$, the restriction of $\nabla^2r_p$ to $\gamma'(t)^\perp$
at $\gamma(t)$ is $A'(t)A(t)^{-1}$, and the same holds along the
reversed chord; thus
\begin{equation}\label{eq:E-from-A}
\bcE_q=A'(r)A(r)^{-1}-r^{-1}I_{n-1},
\end{equation}
and likewise for $\bcE_p$. The following estimate shows how
$\cE_p$ and $\cE_q$ control the excess of $J$ over $r^{n-1}$.

\begin{proposition}[Endpoint comparison I]\label{lem:jacobi-endpoint}
For every distinct pair of points $p,q\in\Gamma$ and all $\lambda,\mu>0$,
\begin{equation}\label{eq:weighted-jacobi}
\lambda\tr_\Gamma\cE_q+\mu\tr_\Gamma\cE_p
\geq\frac{2|uv|\sqrt{\lambda\mu}}{r}\Big(1-\frac{r^{n-1}}{J}\Big)
\geq\frac{4|uv|\lambda\mu}{(\lambda+\mu)r}\Big(1-\frac{r^{n-1}}{J}\Big).
\end{equation}
In particular,
$
\tr_\Gamma\cE_p+\tr_\Gamma\cE_q\geq\frac{2|uv|}{r}\big(1-\frac{r^{n-1}}{J}\big).
$
\end{proposition}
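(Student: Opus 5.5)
The plan has two parts. First, reduce the tangential traces $\tr_\Gamma\cE_q$ and $\tr_\Gamma\cE_p$ to the full traces $\tr\bcE_q$ and $\tr\bcE_p$, losing factors $v^2$ and $u^2$. Second, compare those full traces with the polar Jacobian by a Jacobi field (Riccati) argument along the chord.

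\emph{Step 1 (algebraic reduction).} Since $\cE_q\geq0$ and $\gamma'(r)$ lies in its kernel, we have $\cE_q(\nu_q,\nu_q)=\cE_q(\tilde\nu_q,\tilde\nu_q)$. Here $\tilde\nu_q\perp\gamma'(r)$ and $|\tilde\nu_q|^2=1-v^2$. The tangential trace is the full trace minus the normal component:
$$
\tr_\Gamma\cE_q=\tr\bcE_q-\bcE_q(\tilde\nu_q,\tilde\nu_q)\geq\tr\bcE_q-(1-v^2)\|\bcE_q\|\geq v^2\tr\bcE_q,
$$
where the last step uses $\|\bcE_q\|\leq\tr\bcE_q$, valid because $\bcE_q\geq0$. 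Likewise $\tr_\Gamma\cE_p\geq u^2\tr\bcE_p$. The arithmetic--geometric mean inequality then gives
$$
\lambda\tr_\Gamma\cE_q+\mu\tr_\Gamma\cE_p\geq 2|uv|\sqrt{\lambda\mu}\,\sqrt{\tr\bcE_p\,\tr\bcE_q}.
$$
The second inequality in \eqref{eq:weighted-jacobi} is just $\sqrt{\lambda\mu}\geq 2\lambda\mu/(\lambda+\mu)$. Taking $\lambda=\mu=1$ gives the last claim. So everything reduces to the chord inequality
$$
\sqrt{\tr\bcE_p\,\tr\bcE_q}\;\geq\;\frac1r\Big(1-\frac{r^{n-1}}{J}\Big).
$$

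\emph{Step 2 (one-endpoint bound).} Since $J(p,q)=J(q,p)$, and $\bcE_p$ is given by \eqref{eq:E-from-A} for the reversed chord, it suffices to prove the one-sided bound
$$
r\tr\bcE_q\geq1-\frac{r^{n-1}}{J}=1-\det\tilde A .
$$
Applying it to both orientations, the geometric mean dominates the minimum, which gives the chord inequality. To prove the one-sided bound, set $U(t)=A'(t)A(t)^{-1}$. This matrix is symmetric by the Wronskian identity $A^TA'=A'^TA$. It satisfies the Riccati equation $U'=-K-U^2$, and $U(t)\geq I/t$ by Hessian comparison. Let $f(t)=\log\big(\det A(t)/t^{n-1}\big)$. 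Then $f(0^+)=0$, $f'(t)=\tr\big(U(t)-I/t\big)\geq0$, and $f(r)=\log(J/r^{n-1})$. Since $1-e^{-f(r)}\leq f(r)$, it is enough to show
$$
f(r)\leq r f'(r),
$$
which would follow if $t\mapsto t\,f'(t)-f(t)$ is nondecreasing, i.e.\ if $t f''(t)\geq0$. If that fails, I would work with the eigenvalues $\sigma_i\geq1$ of $rU(r)$ and use $\sum(\sigma_i-1)\geq\sum(1-\sigma_i^{-1})\geq1-\prod\sigma_i^{-1}$. I would then try to compare $\prod\sigma_i$ with $J/r^{n-1}$ using the matrix $\tilde A$ directly.

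\emph{Main obstacle.} The delicate point is Step~2. We have $f''=-\tr K-\tr U^2+(n-1)/t^2$. The term $-\tr K\geq0$ helps, while $-\tr U^2$ hurts, so convexity of $f$ is not automatic. A single-endpoint estimate may therefore be false for long chords in strongly curved regions. In that case the genuinely two-endpoint form $\tr\bcE_p\,\tr\bcE_q$ must be used. First I would write $\bcE_p$ in terms of $A(r)$, $A'(r)$ and the Jacobi fields from $q$; a Wronskian identity expresses the Jacobi fields vanishing at $q$ through $A$ and $\int_0^r (A^TA)^{-1}$. I would then aim to prove $r^2\,\tr\bcE_p\,\tr\bcE_q\geq(1-\det\tilde A)^2$ via Cauchy--Schwarz on this integral representation. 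This is the step I expect to require the real work.
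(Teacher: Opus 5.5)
Your Step~1 is correct and coincides with the paper's reduction: $\tr_\Gamma\cE_q\geq v^2\tr\bcE_q$ and $\tr_\Gamma\cE_p\geq u^2\tr\bcE_p$, followed by AM--GM. So everything rests on $r\sqrt{\tr\bcE_p\,\tr\bcE_q}\geq1-r^{n-1}/J$, and this is the gap: you do not prove it. Moreover, your main route to it is false, not merely delicate. The one-sided bound $r\tr\bcE_q\geq1-r^{n-1}/J$ already fails for curvature concentrated at a point nearer to $p$ than to $q$, however weak that curvature is. In the normalized setting of Section~\ref{subsec:joint}, take $\tilde K=\rho\,\xi\xi^T\delta_s$ with $\rho>0$ and $0<s<1/2$. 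Then $Y(1)=I+\rho s(1-s)\xi\xi^T$ and $Y'(1)=I+\rho s\,\xi\xi^T$, so
$$
r\tr\bcE_q=\frac{\rho s^2}{1+\rho s(1-s)}
<\frac{\rho s(1-s)}{1+\rho s(1-s)}=1-\frac{r^{n-1}}{J}.
$$
Smoothing the point mass gives continuous counterexamples. At the other endpoint $r\tr\bcE_p=\rho(1-s)^2/(1+\rho s(1-s))$, and the product of the two is exactly $(1-r^{n-1}/J)^2$ (cf.\ Note~\ref{note:joint-sharp}). So the needed inequality is sharp and genuinely two-endpoint: it cannot follow from a bound on $\min\{\tr\bcE_p,\tr\bcE_q\}$. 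The same objection applies to your other ideas. Your eigenvalue fallback $\prod\sigma_i\geq J/r^{n-1}$ would imply the false one-sided bound, so it fails too. The plan via Jacobi fields from $q$ and $\int_0^r(A^TA)^{-1}$ is only a sketch.

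The missing idea is to couple the two endpoints through the off-diagonal block of the endpoint (index-form) matrix. For the Jacobi field with $Z(0)=x$ and $Z(r)=y$,
$$
-\langle x,Z'(0)\rangle+\langle y,Z'(r)\rangle=\int_0^r\big(|Z'|^2-\langle KZ,Z\rangle\big)\,dt\geq\frac{|y-x|^2}{r}.
$$
The left side is a quadratic form in $(x,y)$ with diagonal blocks $\bcE_p+I/r$ and $\bcE_q+I/r$, and off-diagonal blocks $-A(r)^{-1}$ and $-A(r)^{-T}$. The inequality is therefore exactly the positivity \eqref{eq:block-positive} of $\M$. Testing it on the singular vectors of $I-\tilde A=\sum_is_i\zeta_i\xi_i^T$ gives $s_i^2\leq r^2\langle\bcE_p\xi_i,\xi_i\rangle\langle\bcE_q\zeta_i,\zeta_i\rangle$. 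Cauchy--Schwarz then yields $\|I-\tilde A\|_{S_1}\leq r\sqrt{\tr\bcE_p\,\tr\bcE_q}$. Since the singular values of $\tilde A$ lie in $(0,1]$, you get $1-\det\tilde A\leq\sum_i(1-s_i(\tilde A))\leq\|I-\tilde A\|_{S_1}$. With a single transverse direction this is literally $\det\M\geq0$, which is precisely the product inequality your Step~1 reduces to.
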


\begin{proof}
Work along the unit-speed geodesic $\gamma\colon[0,r]\to M$ from
$p$ to $q$, in the parallel frame of
Section~\ref{sec:jacobi-fields}, with curvature matrix $K(t)$ and
Jacobi matrix $A(t)$; both sides of the inequality will be
expressed through the second derivatives of the distance at the
endpoints of $\gamma$. A Jacobi field $Z$ perpendicular to
$\gamma$, viewed as a map $Z\colon[0,r]\to\R^{n-1}$, satisfies
$
Z''+KZ=0.
$
Denote by $T$ the solution operator of this equation, that is, the
matrix for which
$$
\begin{pmatrix}Z(r)\\ Z'(r)\end{pmatrix}
=\begin{pmatrix}T_{11}&T_{12}\\ T_{21}&T_{22}\end{pmatrix}
\begin{pmatrix}Z(0)\\ Z'(0)\end{pmatrix}.
$$
Since the Wronskian
$
Z_1^TZ_2'-(Z_1')^TZ_2
$
of two solutions $Z_1,Z_2$ is constant,
\begin{equation}\label{eq:symplectic-relations}
T_{11}T_{12}^T=T_{12}T_{11}^T,
\qquad T_{12}^TT_{22}=T_{22}^TT_{12},
\qquad T_{11}^TT_{22}-T_{21}^TT_{12}=I.
\end{equation}
Moreover $T_{12}=A(r)$ and $T_{22}=A'(r)$, and $T_{12}$ is
invertible since $\gamma$ has no conjugate points. Given
$x,y\in\R^{n-1}$, let $Z\coloneqq Z_{x,y}$ be the transverse
Jacobi field with
$
Z(0)=x
$
and
$
Z(r)=y.
$
Solving the transfer relation with the help of
\eqref{eq:symplectic-relations} gives
$$
\begin{pmatrix}-Z'(0)\\ Z'(r)\end{pmatrix}
=Q\begin{pmatrix}x\\y\end{pmatrix},
\qquad
Q\coloneqq
\begin{pmatrix}
T_{12}^{-1}T_{11}&-T_{12}^{-1}\\
-T_{12}^{-T}&T_{22}T_{12}^{-1}
\end{pmatrix},
$$
whose diagonal blocks are symmetric by the first two relations in
\eqref{eq:symplectic-relations}; we call $Q$ the \emph{endpoint
matrix} of the chord, as it sends the endpoint values of a
transverse Jacobi field to its outward endpoint derivatives. An integration by parts along
$\gamma$ yields
\begin{equation}\label{eq:endpoint-energy}
-\langle x,Z'(0)\rangle+\langle y,Z'(r)\rangle
=\int_0^r\frac d{dt}\langle Z,Z'\rangle\,dt
=\int_0^r\big(|Z'|^2-\langle KZ,Z\rangle\big)\,dt.
\end{equation}
By the second-variation formula for the distance between the
endpoints, or directly from \eqref{eq:endpoint-energy}, first with
$y=0$ and then with $x=0$, the diagonal blocks of $Q$ are the
restrictions of $\nabla^2r_q(p)$ to $\gamma'(0)^\perp$ and of
$\nabla^2r_p(q)$ to $\gamma'(r)^\perp$. Hence, in terms of the
matrices $\bcE_p$, $\bcE_q$ of the remainders
\eqref{eq:hessian-remainders},
\begin{equation}\label{eq:Ep-transfer}
\bcE_p=T_{12}^{-1}T_{11}-r^{-1}I,
\qquad
\bcE_q=T_{22}T_{12}^{-1}-r^{-1}I;
\end{equation}
in particular $\bcE_q=A'(r)A(r)^{-1}-r^{-1}I$, which is
\eqref{eq:E-from-A}.
For arbitrary endpoint values,
$$
\int_0^r|Z'|^2\,dt
\geq\frac1r\left|\int_0^rZ'\,dt\right|^2
=\frac{|y-x|^2}{r},
$$
and $-\langle KZ,Z\rangle\geq0$ since $K\leq0$. Therefore
$
Q\geq Q_0
\coloneqq\frac1r\left(\begin{smallmatrix}I&-I\\-I&I\end{smallmatrix}\right).
$
Since $T_{12}=A(r)$, the matrix $\tilde A$ of \eqref{eq:W} is $rT_{12}^{-T}$.
Using \eqref{eq:Ep-transfer}, the matrix inequality
$r(Q-Q_0)\geq0$ becomes
\begin{equation}\label{eq:block-positive}
\begin{pmatrix}
r\bcE_p&(I-\tilde A)^T\\
I-\tilde A&r\bcE_q
\end{pmatrix}\geq0.
\end{equation}

To extract the Jacobian defect from \eqref{eq:block-positive},
put $\alpha\coloneqq\tr \bcE_p$, $\beta\coloneqq\tr \bcE_q$, and $N\coloneqq I-\tilde A$.
Let $N=\sum_i s_i\,\zeta_i\xi_i^{T}$ be a singular value
decomposition of $N$, with $s_i\geq0$ and $\{\xi_i\}$,
$\{\zeta_i\}$ orthonormal bases of $\R^{n-1}$, so that
$N\xi_i=s_i\zeta_i$. By \eqref{eq:block-positive},
$
s_i^2\leq r^2\langle \bcE_p\xi_i,\xi_i\rangle
\langle \bcE_q\zeta_i,\zeta_i\rangle.
$
Hence, by the Cauchy--Schwarz inequality, the trace norm
$\|\cdot\|_{S_1}$, i.e., the sum of the singular values, satisfies
$$
\|I-\tilde A\|_{S_1}
=\sum_{i=1}^{n-1}s_i
\leq r
\sqrt{\sum_{i=1}^{n-1}\langle \bcE_p\xi_i,\xi_i\rangle}
\sqrt{\sum_{i=1}^{n-1}\langle \bcE_q\zeta_i,\zeta_i\rangle}
=r\sqrt{\alpha\beta}.
$$
By \eqref{eq:B-lower-bound}, every singular value of $A(r)=T_{12}$ is
at least $r$. Hence the singular values $s_i(\tilde A)$ lie in $(0,1]$.
Moreover,
$
\det \tilde A=r^{n-1}/\det A(r)=r^{n-1}/J.
$
For $0\leq s_i\leq1$,
we have
$
1-\prod_{i=1}^{n-1}s_i
\leq\sum_{i=1}^{n-1}(1-s_i).
$
It follows that
\begin{equation}\label{eq:det-trace-norm}
1-\frac{r^{n-1}}{J}
\leq\sum_{i=1}^{n-1}(1-s_i(\tilde A))
=\|I\|_{S_1}-\|\tilde A\|_{S_1}
\leq\|I-\tilde A\|_{S_1}
\leq r\sqrt{\alpha\beta}.
\end{equation}

It remains to pass from $\tr\bcE_p$, $\tr\bcE_q$ to the traces
over $T_p\Gamma$, $T_q\Gamma$. Write
$
\nu_p=u\nabla r_q+\tilde\nu_p,
$
$
|\tilde\nu_p|^2=1-u^2.
$
Because $\cE_p$ vanishes in the radial direction,
$
\tr_\Gamma\cE_p=\alpha-\langle\bcE_p\tilde\nu_p,\tilde\nu_p\rangle.
$
Since $\bcE_p\geq0$,
$
\langle\bcE_p\tilde\nu_p,\tilde\nu_p\rangle
\leq|\tilde\nu_p|^2\tr \bcE_p=(1-u^2)\alpha,
$
and therefore
$
\tr_\Gamma\cE_p\geq u^2\alpha.
$
The same argument gives $\tr_\Gamma\cE_q\geq v^2\beta$. Combining these equations with
\eqref{eq:det-trace-norm},  and the
arithmetic--geometric mean inequality, yields
$$
\lambda\tr_\Gamma\cE_q+\mu\tr_\Gamma\cE_p\geq \lambda v^2\beta+\mu u^2\alpha
\geq2|uv|\sqrt{\lambda\mu\alpha\beta}
\geq\frac{2|uv|\sqrt{\lambda\mu}}{r}\Big(1-\frac{r^{n-1}}{J}\Big),
$$
which is the first inequality in \eqref{eq:weighted-jacobi}; the second is $\sqrt{\lambda\mu}\geq2\lambda\mu/(\lambda+\mu)$.
\end{proof}

\section{Endpoint Comparison II}\label{subsec:joint}

To establish our main results in dimensions $8$ and $9$ we need to sharpen the estimate of the last section.
Chords of $\Gamma$ carry three pieces of information about the
curvature of $M$: (i) how much the geodesic spheres about one endpoint
have spread by the time they reach the other, which is the polar
Jacobian $J$; (ii) how much the geodesic sphere about one endpoint
curves at the other beyond the $1/r$ due to its radius, which is
the Hessian remainder $\cE$ there; and (iii) how much the direction of
the chord at one endpoint turns when the other endpoint is moved,
which is measured by the map $\tilde A$ of \eqref{eq:W}.

These three effects
cannot vary independently. Proposition~\ref{lem:jacobi-endpoint}
described one relation: the spreading in (i) is bounded above by
the extra curvature in (ii). The map $\tilde A$ in (iii) is
subject to the same constraint: its deviation from the identity
is bounded above by the extra curvature in (ii), as we show below.
We generalize Proposition~\ref{lem:jacobi-endpoint}
to a single estimate which bounds the spreading in (i) above by
any nonnegative combination of the extra curvature in (ii) and
the deviation of $\tilde A$ in (iii), with a sharp constant.
To formulate the estimate, let
\begin{equation}\label{eq:M-block}
\M\coloneqq\begin{pmatrix} r\bcE_p&(I-\tilde A)^T\\ I-\tilde A&r\bcE_q\end{pmatrix}
\end{equation}
which is positive semidefinite by \eqref{eq:block-positive}.
Let $\CC\coloneqq\left(\begin{smallmatrix}U&W\\W^T&V\end{smallmatrix}\right)$ be a positive semidefinite symmetric  $2(n-1)\times2(n-1)$
matrix, written in $(n-1)\times(n-1)$ blocks, and put
\begin{equation}\label{eq:lambda-C}
\lambda(\CC)\coloneqq\min_{|\xi|=1}
\Big\{2\sqrt{\langle U\xi,\xi\rangle\langle V\xi,\xi\rangle}
+2\langle W\xi,\xi\rangle\Big\}\geq 0.
\end{equation}

\begin{proposition}[Endpoint comparison II]\label{thm:joint}
For every positive semidefinite symmetric matrix $\CC$,
\begin{equation}\label{eq:joint}
\tr(\CC\M)\geq\lambda(\CC)\Big(1-\frac{r^{n-1}}{J}\Big).
\end{equation}
\end{proposition}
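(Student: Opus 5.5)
The plan is to expand the trace blockwise and then reduce to a one-direction-at-a-time estimate of the same kind as in the proof of Proposition~\ref{lem:jacobi-endpoint}. Write $Y\coloneqq r\bcE_p$, $Z\coloneqq r\bcE_q$, $N\coloneqq I-\tilde A$, so that $\M=\left(\begin{smallmatrix}Y&N^T\\ N&Z\end{smallmatrix}\right)\geq0$ by \eqref{eq:block-positive}. Multiplying out the blocks gives
$$
\tr(\CC\M)=\tr(UY)+\tr(VZ)+2\tr(WN).
$$

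\emph{The right-hand side.} First I will replace $1-r^{n-1}/J$ by $\tr N$. Since $\|\tilde A\|\leq1$, we have $\langle N\xi,\xi\rangle=1-\langle\tilde A\xi,\xi\rangle\geq0$ for every unit $\xi$. Moreover $\tr\tilde A\leq\sum_i s_i(\tilde A)$. Combining this with the product inequality already used in \eqref{eq:det-trace-norm} gives
$$
1-\frac{r^{n-1}}{J}=1-\det\tilde A\leq\sum_i\big(1-s_i(\tilde A)\big)\leq (n-1)-\tr\tilde A=\tr N=\sum_i\langle N\xi_i,\xi_i\rangle
$$
for \emph{any} orthonormal basis $\{\xi_i\}$. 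It therefore suffices to find a basis with
$$
\tr(\CC\M)\geq\lambda(\CC)\sum_i\langle N\xi_i,\xi_i\rangle .
$$

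\emph{The directional estimate.} Fix a unit vector $\xi$ and write $u_\xi=\langle U\xi,\xi\rangle$, and similarly $v_\xi,w_\xi,y_\xi,z_\xi,n_\xi$. Testing $\M\geq0$ on the vectors $(a\xi,-b\xi)$ gives $y_\xi z_\xi\geq n_\xi^2$. Hence, by AM--GM and $n_\xi\geq0$,
$$
u_\xi y_\xi+v_\xi z_\xi+2w_\xi n_\xi\geq2\sqrt{u_\xi v_\xi}\,n_\xi+2w_\xi n_\xi\geq\lambda(\CC)\,n_\xi .
$$
When $U,V,W$ are simultaneously diagonal in some orthonormal basis $\{\xi_i\}$, $\tr(\CC\M)$ is exactly the sum of these directional expressions, and the proposition follows.

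\emph{Reduction for general $\CC$.} This is the main obstacle, since $U,V,W$ need not be simultaneously diagonalizable. I would treat it as a semidefinite minimization. With $N$ fixed, minimize $\tr(UY)+\tr(VZ)$ over $Y,Z\geq0$ subject to $\M\geq0$. By the Schur complement, $Y\geq N^TZ^{+}N$, so the minimum is governed by the trace-norm pairing $2\|V^{1/2}NU^{1/2}\|_{S_1}$; one checks this on the $2\times2$ scalar case and then in general via a singular value decomposition. It then remains to show
$$
2\|V^{1/2}NU^{1/2}\|_{S_1}+2\tr(WN)\geq\lambda(\CC)\tr N .
$$
For this I would use the singular vectors of $V^{1/2}NU^{1/2}$ as a basis, apply Cauchy--Schwarz directionwise as in the proof of Proposition~\ref{lem:jacobi-endpoint}, and use again that $n_\xi\geq0$.

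If that route stalls, the fallback is a pinching argument. One replaces $\CC$ by its block-diagonal pinching $D(\CC)$ in an eigenbasis of the symmetric part of $N$. This pinching satisfies $\lambda(D(\CC))\geq\lambda(\CC)$: the defining expression is concave in the weights $\xi_i^2$, so its minimum is attained at the basis vectors. The point to verify is then that pinching does not increase $\tr(\CC\M)$ at the SDP minimizer.
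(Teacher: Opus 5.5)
The parts you checked are correct: the directional estimate $u_\xi y_\xi+v_\xi z_\xi+2w_\xi n_\xi\ge\lambda(\CC)n_\xi$, and the case where $U,V,W$ are simultaneously diagonal. The general case, however, cannot be completed along your lines. Your reduction uses only three facts: $\M\ge0$, $\|\tilde A\|\le1$, and $\det\tilde A=r^{n-1}/J$. These three facts do not imply \eqref{eq:joint}. Take $n-1=2$ and
\[
\tilde A=\begin{pmatrix}\tfrac7{10}&-\tfrac25\\ \tfrac25&\tfrac7{10}\end{pmatrix},\quad
N=\begin{pmatrix}\tfrac3{10}&\tfrac25\\ -\tfrac25&\tfrac3{10}\end{pmatrix},\quad
Y=Z=\tfrac12I,\quad U=V=I,\quad W=\begin{pmatrix}0&1\\ -1&0\end{pmatrix}.
\]
Then $\tilde A^T\tilde A=\tfrac{13}{20}I$, $\det\tilde A=\tfrac{13}{20}$ and $N^TN=\tfrac14I$, so $\M\ge0$ and $\|\tilde A\|<1$. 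Also $\CC\ge0$ and $\lambda(\CC)=2$. But $\tr(WN)=-\tfrac45$, so
\[
\tr(\CC\M)=\tfrac25<\tfrac7{10}=\lambda(\CC)(1-\det\tilde A).
\]
This breaks each step you proposed. Since $Y=Z=\tfrac12I$ attains your SDP minimum $2\|N\|_{S_1}=2$, the trace-norm inequality you need (with right side $\lambda(\CC)\tr N=\tfrac65$) is false. The pinching fallback also fails: pinching kills the skew $W$ and raises $\tr(\CC\M)$ from $\tfrac25$ to $2$. The mechanism is that a skew part of $N$ pairs with a skew part of $W$ in $\tr(WN)$, while $\lambda(\CC)$ and $\tr N$ see only symmetric parts. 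Nonsymmetric $W=2buv\,\tilde\nu_p\tilde\nu_q^T$ is exactly what Section~\ref{subsec:integrand-89} needs.

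Your first reduction is also too lossy in its own right. $\tr N$ can approach $n-1$, while $1-r^{n-1}/J<1$. The target $\tr(\CC\M)\ge\lambda(\CC)\tr N$ fails even for genuine Jacobi data with $\tilde K\ge0$. Take two point masses $\rho_i=100$ at $t=\tfrac13,\tfrac23$ in directions $(1,0)$ and $(\tfrac1{20},\tfrac{\sqrt{399}}{20})$, or smoothings of them. Take $U=\operatorname{diag}(1,4)$, $V=\operatorname{diag}(4,1)$ and the same $W$, so that $\CC\ge0$ and $\lambda(\CC)=4$. A direct computation from $Y(1)$, $Y'(1)$, $T_{11}$ gives $\tr(\CC\M)\approx7.59<7.65\approx\lambda(\CC)\tr N$, although $\lambda(\CC)(1-\tilde J^{-1})\approx3.99$.

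What is missing is the coupling between $\M$ and $J$ imposed by the Jacobi equation with $\tilde K\ge0$, which is far stronger than $\M\ge0$. The paper captures it in three steps.

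1. It first takes $\tilde K$ to be a finite sum of rank-one point masses. There $\M=B^T(D_\rho^{-1}+G)^{-1}B$ and $\tilde J=\det(I+D_\rho G)$, with $G$ the Gram matrix of the Dirichlet Green function.
2. The principal-minor expansion in Lemma~\ref{lem:matrix-inequality} then gives $\M\ge(1-\tilde J^{-1})\sum_i\theta_ie_{t_i,\xi_i}e_{t_i,\xi_i}^T$ with $\sum_i\theta_i=1$. This bounds $\M$ below by a multiple of $1-\tilde J^{-1}$ directly, never passing through $\tr N$. Each rank-one piece has the symmetric off-diagonal block $\xi\xi^T$. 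So any skew part of $N$ lies in a positive semidefinite remainder, which pairs nonnegatively with $\CC$. Finally $\langle\CC e_{t,\xi},e_{t,\xi}\rangle\ge\lambda(\CC)$ by AM--GM.
3. Continuous $\tilde K$ is handled by convergence of the transfer matrices.
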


Proposition~\ref{lem:jacobi-endpoint} follows by taking
$W=0$, $U=\mu(I-\tilde\nu_p\tilde\nu_p^T)$, and
$V=\lambda(I-\tilde\nu_q\tilde\nu_q^T)$, since then
$\tr(\CC\M)=r\big(\mu\tr_\Gamma\cE_p+\lambda\tr_\Gamma\cE_q\big)$,
as in the proof of Proposition~\ref{lem:jacobi-endpoint}, and
$\lambda(\CC)\geq2|uv|\sqrt{\lambda\mu}$.

To prove Proposition~\ref{thm:joint}, we
rescale the chord to unit length, so that $r$ drops out and both
sides of \eqref{eq:joint} become functions of a single nonnegative
curvature matrix $\tilde K$ on $[0,1]$. Put
$\tilde K(t)\coloneqq-r^2K(rt)\geq0$ and $Y(t)\coloneqq A(rt)/r$
for $0\leq t\leq1$, so that
\begin{equation}\label{eq:normalized-jacobi}
Y''=\tilde K Y,\qquad Y(0)=0,\qquad Y'(0)=I,
\end{equation}
and $\tilde J\coloneqq\det Y(1)=J/r^{n-1}\geq1$. A transverse Jacobi
field $Z$ along $\gamma$ corresponds to $\tilde Z(t)=Z(rt)$, with
$\tilde Z'=rZ'$, so the endpoint matrix $Q$ of
Section~\ref{subsec:jacobi} becomes $rQ$ for
\eqref{eq:normalized-jacobi}, and by
\eqref{eq:Ep-transfer}--\eqref{eq:block-positive}
\begin{equation}\label{eq:M-normalized}
\M=rQ-\begin{pmatrix}I&-I\\-I&I\end{pmatrix}.
\end{equation}

First we consider the case where $\tilde K$ consists of finitely many point masses:
\begin{equation}\label{eq:point-potential}
\tilde K=\sum_{i=1}^k\rho_i\,\xi_i\xi_i^T\delta_{t_i},
\qquad \rho_i>0,\quad|\xi_i|=1,\quad0<t_1<\dots<t_k<1.
\end{equation}
Here $\delta_{t_i}$ is the Dirac mass at $t_i$, and $\xi_i\xi_i^T$
is the orthogonal projection onto the line spanned by $\xi_i$; so
$\langle\tilde KX,X\rangle=\rho_i\langle X,\xi_i\rangle^2\delta_{t_i}$
near $t_i$, that is, the plane spanned by $\gamma'$ and a
transverse vector $X$ is curved at $t_i$ in proportion to
$\langle X,\xi_i\rangle^2$, and not at all when $X\perp\xi_i$.
Accordingly, solutions of $Y''=\tilde KY$ are continuous and
affine between the nodes, and $Y'$ jumps at $t_i$ by
$\rho_i\xi_i\langle\xi_i,Y(t_i)\rangle$, so only the
$\xi_i$-component of a Jacobi field is affected.
Let $B$ be the $k\times2(n-1)$ matrix with rows
$B_i\coloneqq\big((1-t_i)\xi_i^T,\ t_i\xi_i^T\big)$, let
$$
G_{ij}\coloneqq\big(\min\{t_i,t_j\}-t_it_j\big)\langle\xi_i,\xi_j\rangle,
\qquad D_\rho\coloneqq\operatorname{diag}(\rho_1,\dots,\rho_k),
$$
and note that $\min\{s,t\}-st$ is the Green function of $-d^2/dt^2$
on $[0,1]$ with Dirichlet boundary conditions. Thus $G$ is the
Gram matrix, for the inner product $\int\langle V',V'\rangle$ on
$H^1_0((0,1);\R^{n-1})$, of the functionals $V\mapsto\langle\xi_i,V(t_i)\rangle$,
which are linearly independent, so $G$ is positive definite. We need the following linear algebra fact, which  reduces the required estimate for several point masses to bounds associated with individual masses:

\begin{lemma}\label{lem:matrix-inequality}
Let $G$ be a positive definite $k\times k$ matrix, $D$ a positive
diagonal $k\times k$ matrix, and $B$ a $k\times m$ matrix with rows
$B_1,\dots,B_k$. Then there are $\theta_1,\dots,\theta_k\geq0$
with $\sum_i\theta_i=1$ such that
\begin{equation*}
B^T(D^{-1}+G)^{-1}B
\geq\Big(1-\frac1{\det(I+DG)}\Big)\sum_{i=1}^k\theta_i\frac{B_i^TB_i}{G_{ii}} .
\end{equation*}
\end{lemma}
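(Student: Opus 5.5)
The plan is to reduce the matrix inequality to a statement about the $k\times k$ matrix $S\coloneqq D^{-1}+G$, and then to induct on $k$ using Schur complements. The determinant factorizes along Schur complements, and the number $1-1/\det(I+DG)$ splits accordingly as a telescoping convex-type sum.

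\emph{Step 1: reduction.} Since $x^TB^TS^{-1}Bx=y^TS^{-1}y$ with $y=Bx$, and $x^TB_i^TB_ix=y_i^2$, it suffices to find $\theta$ in the simplex such that, as $k\times k$ forms,
$$
S^{-1}\geq c\,\operatorname{diag}(\theta_i/G_{ii}),\qquad c\coloneqq1-\frac1{\det(I+DG)}=1-\frac{\det D^{-1}}{\det S}.
$$
The case $k=1$ is an identity: $S^{-1}=\rho/(1+\rho G_{11})$ and $c=\rho G_{11}/(1+\rho G_{11})$, so $\theta_1=1$ gives equality. The example with $G$ diagonal shows that $\theta$ genuinely has to be spread out; indeed $\theta_i=1$ fails when some $\rho_iG_{ii}$ is small.

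\emph{Step 2: splitting off the last mass.} Write $S=\left(\begin{smallmatrix}S'&s\\s^T&\sigma\end{smallmatrix}\right)$ with $\sigma=\rho_k^{-1}+G_{kk}$. For every $y=(y',y_k)$,
$$
y^TS^{-1}y=\frac{(y_k-s^TS'^{-1}y')^2}{\sigma-s^TS'^{-1}s}+y'^TS'^{-1}y'\geq y'^TS'^{-1}y'.
$$
A second lower bound comes from the rank-one Cauchy--Schwarz estimate $y^TS^{-1}y\geq (e_k^Ty)^2/(e_k^TSe_k)=y_k^2/\sigma$. Thus $S^{-1}\geq\operatorname{diag}(0,\dots,0,1/\sigma)$ and $S^{-1}\geq S'^{-1}\oplus0$. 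Any convex combination of these two bounds is also a lower bound. Here $S'=D'^{-1}+G'$ has the same structure for the first $k-1$ masses, with $G'$ still a positive definite Gram matrix with the same diagonal entries. So the induction hypothesis gives $S'^{-1}\geq c'\operatorname{diag}(\theta'_i/G_{ii})$ with $c'=1-1/\det(I+D'G')$.

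\emph{Step 3: bookkeeping of constants, the main obstacle.} Write $a\coloneqq\det(I+DG)/\det(I+D'G')=\det S\,\rho_k/\det S'=\rho_k(\sigma-s^TS'^{-1}s)$. Set $b\coloneqq\det(I+D'G')$, so that $\det(I+DG)=ab$ and
$$
1-\frac1{ab}=\Big(1-\frac1a\Big)+\frac1a\Big(1-\frac1b\Big).
$$
Taking the combination $t\cdot(\text{rank-one bound})+(1-t)\cdot(\text{inductive bound})$ yields $\theta_k\propto tG_{kk}/\sigma$ and $\theta_i\propto(1-t)c'\theta'_i$. One must then choose $t\in[0,1]$ and normalize so that the resulting constant is at least $c$. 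The naive choice runs into trouble: the rank-one term only produces $G_{kk}/\sigma=\rho_kG_{kk}/(1+\rho_kG_{kk})$ instead of $1-1/a$. Moreover $a=1+\rho_k(G_{kk}-s^TS'^{-1}s)\leq1+\rho_kG_{kk}$, so $1-1/a\leq \rho_kG_{kk}/(1+\rho_kG_{kk})$, which is the favourable direction. Hence the first bound contributes at least $1-1/a$ per unit weight. The second bound contributes $c'=1-1/b$, which is at least $\frac1a(1-\frac1b)$.

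Because the two pieces are combined with weights summing to one while the target is a sum, I would take $t\coloneqq(1-1/a)/c$ and $1-t=\frac1a(1-\frac1b)/c$. These are nonnegative and sum to one by the displayed identity. I would then check that $t\,G_{kk}/\sigma\geq c\,\theta_k$ and $(1-t)c'\theta'_i\geq c\,\theta_i$ with $\theta_k\coloneqq t$ and $\theta_i\coloneqq(1-t)\theta'_i$. These checks reduce to the two one-variable inequalities $G_{kk}/\sigma\geq c$ (adjusted by $t$) and $c'\geq c(1-t)/(1-t)$, where the previous paragraph's comparisons are used. The delicate point, where I expect the real work, is getting the correct constant in the first inequality, i.e.\ confirming $G_{kk}/\sigma\geq 1-1/a$ with equality structure compatible with the telescoping. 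If this direct scheme loses a factor, my fallback is to replace the rank-one bound $y_k^2/\sigma$ by the sharper Schur-complement term $(y_k-s^TS'^{-1}y')^2/(\sigma-s^TS'^{-1}s)$. Its denominator equals $a/\rho_k$ exactly, and I would absorb the cross term with a weighted Cauchy--Schwarz inequality against $y'^TS'^{-1}y'$.
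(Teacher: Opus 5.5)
The gap is in Step~3: no convex combination of your two bounds $S^{-1}\geq S'^{-1}\oplus0$ and $S^{-1}\geq\sigma^{-1}e_ke_k^T$ can reach the constant $c$.

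Take $k=2$ and $G=D=I$. Then $S=2I$ and $c=\tfrac34$. The lemma holds with $\theta=(\tfrac12,\tfrac12)$, since $S^{-1}=\tfrac12I\geq\tfrac38I$. But $t\operatorname{diag}(0,\tfrac12)+(1-t)\operatorname{diag}(\tfrac12,0)$ has trace $\tfrac12$, while $c\operatorname{diag}(\theta_1,\theta_2)$ has trace $\tfrac34$. So the domination you need fails for every $t$ and every $\theta$.

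In this example $s=0$ and $S^{-1}=S'^{-1}\oplus\sigma^{-1}$ exactly: the two pieces must be added, not averaged. Your two checks reflect this:
\begin{itemize}
\item The second check, $c'\geq c$, is never true. Indeed $a=1+\rho_k(G_{kk}-s^TS'^{-1}s)>1$ by positive definiteness of $G$, so $c=1-1/(ab)>1-1/b=c'$.
\item The first check, $G_{kk}/\sigma\geq c$, fails in the example ($\tfrac12<\tfrac34$).
\end{itemize}

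Your fallback is the right repair, and it does close, but only after the decoupling constant is optimized. Put $g\coloneqq\rho_kG_{kk}$, $e\coloneqq\rho_k\,s^TS'^{-1}s=1+g-a$, and $z\coloneqq s^TS'^{-1}y'$. Use $(y_k-z)^2\geq(1-\mu)y_k^2-\frac{1-\mu}{\mu}z^2$ together with $z^2\leq\frac{e}{\rho_k}\,y'^TS'^{-1}y'$; this gives
\[
y^TS^{-1}y\geq\frac{g(1-\mu)}{a}\,\frac{y_k^2}{G_{kk}}+\Big(1-\frac{(1-\mu)e}{a\mu}\Big)\,y'^TS'^{-1}y'.
\]
After the induction hypothesis, the total weight $W$ satisfies $a(W-c)=gc'+e-g\mu-ec'/\mu$. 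At $\mu=\sqrt{ec'/g}$ this equals $(\sqrt{gc'}-\sqrt e)^2\geq0$.

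The coefficient of $y'^TS'^{-1}y'$ is nonnegative exactly when $\mu\geq e/(1+g)$. If $\sqrt{ec'/g}<e/(1+g)$, take $\mu=e/(1+g)$ instead, which is your pure rank-one bound. Then $a(W-c)=e/(1+g)-c'>0$, since $c'<ge/(1+g)^2$. If $e=0$, no decoupling is needed.

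This yields a valid inductive proof, different from the paper's. The paper avoids induction altogether:
\begin{itemize}
\item It expands $\det(I+DG)=1+\sum_Tm_T$ over nonempty index sets $T$, with $m_T=\big(\prod_{i\in T}\rho_i\big)\det G_T$.
\item Via Jacobi's formula it shows $\det(I+DG)\,B^T(D^{-1}+G)^{-1}B=\sum_Tm_T\,B_T^TG_T^{-1}B_T$, so the weights total exactly $\det(I+DG)-1$ with no loss.
\item A single Schur complement then gives $B_T^TG_T^{-1}B_T\geq B_i^TB_i/G_{ii}$ for any $i\in T$.
\end{itemize}
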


\begin{proof}
Write $\Phi\coloneqq B^T(D^{-1}+G)^{-1}B$ and
$D=\operatorname{diag}(\rho_1,\dots,\rho_k)$. We first expand
$\det(I+DG)$ in principal minors:
$$
\det(I+DG)=1+\sum_{S\neq\varnothing}m_S,
\qquad
m_S\coloneqq\Big(\prod_{i\in S}\rho_i\Big)\det G_S>0,
$$
where $S$ runs over the nonempty subsets of $\{1,\dots,k\}$ and
$G_S$ is the submatrix of $G$ with rows and columns in $S$.
Next we expand $\det(I+DG)\,\Phi$ in the same way: for every
symmetric $\CC$, $\tr(\CC\Phi)$ is the derivative at $\epsilon=0$
of $\log\det\big(I+D(G+\epsilon B\CC B^T)\big)$, and computing
this derivative once by Jacobi's formula and once minor by minor
gives
\begin{equation*}
\det(I+DG)\,\Phi=\sum_{S\neq\varnothing}m_S\Theta_S,
\qquad
\Theta_S\coloneqq B_S^TG_S^{-1}B_S ,
\end{equation*}
with $B_S$ the rows of $B$ in $S$. Thus
$\Phi/\big(1-\det(I+DG)^{-1}\big)$ is a weighted average of the
matrices $\Theta_S$, and it suffices to show that each $\Theta_S$
dominates $B_i^TB_i/G_{ii}$ for some $i$; the $\theta_i$ are then
the total weights of the sets $S$ assigned to $i$.
In fact, for every $i\in S$, $\Theta_S\geq\Theta_{\{i\}}$.
Indeed, reorder $i$ first and write
$G_S=\left(\begin{smallmatrix}g&z^T\\ z&G'\end{smallmatrix}\right)$,
$B_S=\left(\begin{smallmatrix}B_i\\ B'\end{smallmatrix}\right)$,
with $g=G_{ii}$; then
$$
\Theta_S=\frac{B_i^TB_i}{g}
+\big(B'-zB_i/g\big)^T\big(G'-zz^T/g\big)^{-1}\big(B'-zB_i/g\big)
\geq\frac{B_i^TB_i}{g}=\Theta_{\{i\}},
$$
since $G'-zz^T/g$ is positive definite.
\end{proof}

\begin{lemma}\label{lem:point-curvature}
If $\tilde K$ is a finite sum of point masses given by \eqref{eq:point-potential}, then
\eqref{eq:joint} holds.
\end{lemma}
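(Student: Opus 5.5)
The plan is to compute both sides of \eqref{eq:joint} explicitly for the point-mass potential \eqref{eq:point-potential}. We work throughout in the normalized setting \eqref{eq:normalized-jacobi}, where $1-r^{n-1}/J=1-1/\tilde J$. The goal is two identities,
\begin{equation*}
\M=B^T(D_\rho^{-1}+G)^{-1}B,\qquad \tilde J=\det Y(1)=\det(I+D_\rho G).
\end{equation*}
With these in hand, Lemma~\ref{lem:matrix-inequality} reduces everything to a single point mass, and that case is an AM--GM computation.

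\textbf{The formula for $\M$.} By \eqref{eq:endpoint-energy}, rescaled to $[0,1]$, the quadratic form of $rQ$ at endpoint data $(x,y)$ is the energy $\int_0^1|\tilde Z'|^2+\langle\tilde K\tilde Z,\tilde Z\rangle$ of the Jacobi field with $\tilde Z(0)=x$, $\tilde Z(1)=y$. Since $\tilde K\geq0$, this Jacobi field minimizes the convex functional
\begin{equation*}
\mathcal F(Z)=\int_0^1|Z'|^2\,dt+\sum_i\rho_i\langle\xi_i,Z(t_i)\rangle^2
\end{equation*}
among $H^1$ paths with these endpoints. Write $Z=L+V$, where $L(t)=(1-t)x+ty$ and $V\in H^1_0$. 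Because $\int\langle L',V'\rangle=0$, we get $\mathcal F=|y-x|^2+\int|V'|^2+\sum_i\rho_i(b_i+s_i)^2$, where $b_i=\langle\xi_i,L(t_i)\rangle=B_i(x,y)^T$ and $s_i=\langle\xi_i,V(t_i)\rangle$.

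For prescribed values $s$, the minimal Dirichlet energy of $V$ is $s^TG^{-1}s$, by the Green-function/Gram interpretation of $G$. Minimizing $s^TG^{-1}s+(b+s)^TD_\rho(b+s)$ over $s\in\R^k$ is a standard quadratic computation, and it gives $b^T(D_\rho^{-1}+G)^{-1}b$. Subtracting $|y-x|^2$, which is the form of $\left(\begin{smallmatrix}I&-I\\-I&I\end{smallmatrix}\right)$, and using \eqref{eq:M-normalized} yields the claimed formula for $\M$.

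\textbf{The determinant.} This is the step I expect to need the most care. Integrating $Y''=\tilde KY$ gives
\begin{equation*}
Y(t)=tI+\sum_{t_i<t}(t-t_i)\rho_i\xi_i\eta_i^T,\qquad \eta_i^T\coloneqq\xi_i^TY(t_i).
\end{equation*}
Evaluating at $t=t_j$ and pairing with $\xi_j$ gives a linear system for the $\eta_j$. The cleanest route to $\det Y(1)$ is probably the kernel: $Y(1)x=0$ if and only if the Jacobi field $Y(t)x$ is a Dirichlet solution, and the same ansatz turns this into $(I+GD_\rho)s=0$ for $s_i=\rho_i^{-1}$-weighted node values. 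To get the full determinant rather than just the kernel, I would first try a continuity/homotopy argument in the $\rho_i$. Both sides are polynomial in the $\rho_i$, equal $1$ at $\rho=0$, and vanish on the same sets for complexified $\rho$. A more robust alternative is a direct Schur-complement computation, inserting the masses one at a time (induction on $k$) and checking that each insertion multiplies both sides by the same factor $1+\rho_k(\cdot)$. Forman/Gel'fand--Yaglom gives the same identity, as a ratio of Dirichlet determinants, and serves as a sanity check.

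\textbf{Conclusion.} By Lemma~\ref{lem:matrix-inequality}, there are $\theta_i\geq0$ with $\sum_i\theta_i=1$ such that
\begin{equation*}
\tr(\CC\M)\geq\Big(1-\frac1{\tilde J}\Big)\sum_i\theta_i\frac{\tr(\CC B_i^TB_i)}{G_{ii}}.
\end{equation*}
It therefore suffices to show that $\tr(\CC B_i^TB_i)\geq\lambda(\CC)G_{ii}$ for each $i$. With $a=\langle U\xi_i,\xi_i\rangle$, $b=\langle V\xi_i,\xi_i\rangle$, $c=\langle W\xi_i,\xi_i\rangle$ and $G_{ii}=t_i(1-t_i)$, we have
\begin{equation*}
\frac{\tr(\CC B_i^TB_i)}{G_{ii}}=\frac{1-t_i}{t_i}\,a+\frac{t_i}{1-t_i}\,b+2c\geq2\sqrt{ab}+2c\geq\lambda(\CC),
\end{equation*}
by AM--GM and the definition \eqref{eq:lambda-C}. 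Since the $\theta_i$ sum to $1$, this proves \eqref{eq:joint} for point masses.
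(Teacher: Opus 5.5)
\textbf{Gap: the determinant identity is never proved.} Your derivation of $\M=B^T(D_\rho^{-1}+G)^{-1}B$ matches the paper's. So does your closing step through Lemma~\ref{lem:matrix-inequality} and AM--GM. The problem is the identity $\tilde J=\det(I+D_\rho G)$. It is half of what the lemma needs, and you leave it as a list of strategies without carrying any of them out.

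The strategy you would try first does not work as stated. Two polynomials in $\rho$ that both equal $1$ at $\rho=0$ and vanish on the same complex set need not agree: take $1+\rho_1$ and $(1+\rho_1)^2$. The argument can be rescued, but only by adding that both sides are affine in each $\rho_i$ separately. For $\det(I+D_\rho G)$ this follows from the principal-minor expansion. For $\det Y(1)$ it holds because $Y(1)$ depends on $\rho_i$ through a term $\rho_iR_i$ with $R_i$ of rank at most one. Both sides are then squarefree, and the Nullstellensatz turns equal zero sets into equality. Even so, you would still have to prove that the zero sets agree for complex $\rho$. There is also a slip: $I+GD_\rho$ kills $s_i=\langle\xi_i,Z(t_i)\rangle$ itself, and $I+D_\rho G$ kills $\rho_is_i$; no $\rho_i^{-1}$ weighting occurs.

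The inductive Schur-complement route is only named. Adding the mass at $t_k$ multiplies $\det Y(1)$ by $1+\rho_k\,\xi_k^Tg(t_k,t_k)\xi_k$, where $g$ is the Dirichlet Green function of the operator carrying the earlier masses. Identifying this factor with the Schur-complement factor on the other side requires computing $g$, which is the actual content and is not supplied. The Forman--Gel'fand--Yaglom remark you offer only as a sanity check.

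\textbf{How the paper closes the step.} It works directly from the formula for $Y(t)$ that you already wrote down. Set $N_i\coloneqq\xi_i^TY(t_i)$. Let $P$ and $\bar P$ have rows $(1-t_i)\xi_i^T$ and $t_i\xi_i^T$, and put $L_{ji}\coloneqq(t_j-t_i)_+\langle\xi_j,\xi_i\rangle$. Evaluate at $t=1$, and pair the value at $t=t_j$ with $\xi_j$. This gives $Y(1)=I+P^TD_\rho N$ and $(I-LD_\rho)N=\bar P$. Since $L$ is strictly lower triangular, $\det(I-LD_\rho)=1$. Sylvester's identity $\det(I+XY)=\det(I+YX)$ then yields $\det Y(1)=\det\big(I+(\bar PP^T-L)D_\rho\big)$. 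Checking the cases $t_j\geq t_i$ and $t_j<t_i$ separately shows $\bar PP^T-L=G$ entrywise. This completes the identity with no appeal to zero sets or Green functions.
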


\begin{proof}
We first show that
\begin{equation}\label{eq:finite-identities}
\M=B^T\big(D_\rho^{-1}+G\big)^{-1}B,
\qquad
\tilde J=\det\big(I+D_\rho G\big).
\end{equation}
By \eqref{eq:endpoint-energy}, $\langle e,rQe\rangle$ is the
energy $\int_0^1\big(|V'|^2+\langle\tilde KV,V\rangle\big)\,dt$ of
the Jacobi field with endpoint values $e=(x,y)$, and it is the
least energy of a field $V$ on $[0,1]$ with those endpoint
values: the difference of $V$ and the Jacobi field lies in
$H^1_0$, the cross term in the energy vanishes by an integration
by parts and \eqref{eq:normalized-jacobi}, and the energy of the
difference is nonnegative since $\tilde K\geq0$. For
\eqref{eq:point-potential} the energy is
$\int_0^1|V'|^2\,dt+\sum_i\rho_i\langle\xi_i,V(t_i)\rangle^2$.
Write such a field as
$(1-t)x+ty+V_0(t)$ with $V_0\in H^1_0$; the cross term
$\int\langle y-x,V_0'\rangle\,dt$ vanishes. If
$\eta_i\coloneqq\langle\xi_i,V_0(t_i)\rangle$, the least value of
$\int|V_0'|^2$ is $\eta^TG^{-1}\eta$, so the energy of the
minimizing field is
$$
|y-x|^2+\min_{\eta\in\R^k}\Big\{\eta^TG^{-1}\eta
+(Be+\eta)^TD_\rho(Be+\eta)\Big\}
=|y-x|^2+e^TB^T\big(D_\rho^{-1}+G\big)^{-1}Be,
$$
by the Woodbury identity
$D_\rho-D_\rho(G^{-1}+D_\rho)^{-1}D_\rho=(D_\rho^{-1}+G)^{-1}$.
Subtracting the Euclidean form $|y-x|^2$ gives the first
identity, by \eqref{eq:M-normalized}.
For the second, integrate $Y''=\tilde KY$ with $Y(0)=0$, $Y'(0)=I$:
$$
Y(t)=tI+\sum_i\rho_i(t-t_i)_+\,\xi_i\xi_i^TY(t_i).
$$
Let $N$ be the $k\times(n-1)$ matrix with rows
$N_i\coloneqq\xi_i^TY(t_i)$, and let $P$, $\bar P$, and $L$ be the
matrices with rows $P_i\coloneqq(1-t_i)\xi_i^T$,
$\bar P_i\coloneqq t_i\xi_i^T$, and entries
$L_{ji}\coloneqq(t_j-t_i)_+\langle\xi_j,\xi_i\rangle$, so that $L$
is strictly lower triangular. Evaluating the display at $t=1$ and
at $t=t_j$, and multiplying the latter by $\xi_j^T$, gives
$$
Y(1)=I+P^TD_\rho N,\qquad (I-LD_\rho)N=\bar P .
$$
Since $\det(I+AB)=\det(I+BA)$ and $\det(I-LD_\rho)=1$,
$$
\det Y(1)=\det\big(I+(I-LD_\rho)^{-1}\bar PP^TD_\rho\big)
=\det\big(I+(\bar PP^T-L)D_\rho\big)=\det(I+GD_\rho),
$$
because $(\bar PP^T-L)_{ji}
=\big(t_j(1-t_i)-(t_j-t_i)_+\big)\langle\xi_j,\xi_i\rangle
=G_{ji}$, as one checks separately for $t_j\geq t_i$ and
$t_j<t_i$. As $\det(I+GD_\rho)=\det(I+D_\rho G)$, this is the
second identity.
By \eqref{eq:finite-identities}, Lemma~\ref{lem:matrix-inequality}
applies with $D=D_\rho$; since $G_{ii}=t_i(1-t_i)$, it gives
$$
\M\geq\big(1-\tilde J^{-1}\big)\sum_i\theta_i\,e_{t_i,\xi_i}e_{t_i,\xi_i}^T,
\qquad
e_{t,\xi}\coloneqq\frac{\big((1-t)\xi,\ t\xi\big)}{\sqrt{t(1-t)}},
$$
for some $\theta_i\geq0$ with $\sum_i\theta_i=1$. Taking the
trace against $\CC$ gives
$$
\tr(\CC\M)\geq\big(1-\tilde J^{-1}\big)\sum_i\theta_i
\langle\CC e_{t_i,\xi_i},e_{t_i,\xi_i}\rangle,
$$
and the quadratic form of $\CC$ at $e_{t,\xi}$ is
\begin{equation}\label{eq:e-t-xi}
\langle\CC e_{t,\xi},e_{t,\xi}\rangle
=\frac{1-t}{t}\langle U\xi,\xi\rangle
+\frac{t}{1-t}\langle V\xi,\xi\rangle+2\langle W\xi,\xi\rangle
\geq\lambda(\CC)
\end{equation}
by the arithmetic--geometric mean inequality. Hence
$\tr(\CC\M)\geq\lambda(\CC)(1-\tilde J^{-1})$, which is
\eqref{eq:joint}.
\end{proof}

\begin{proof}[Proof of Proposition~\ref{thm:joint}]
By Lemma~\ref{lem:point-curvature}, \eqref{eq:joint} holds when
$\tilde K$ is a finite sum of point masses. We approximate the given continuous
$\tilde K\geq0$ by such sums.
Partition $[0,1]$ into intervals $I_j$ of lengths
$h_j\leq\epsilon$, and choose $s_j\in I_j$. On $I_j$, approximate
$\tilde K\,dt$ by rank-one point masses at distinct interior
points of $I_j$, one for each eigenvector of an orthonormal
eigenbasis of $\tilde K(s_j)$ with positive eigenvalue, with
strength $h_j$ times that eigenvalue.
Let $\tilde K_\epsilon$ denote the resulting atomic measure.
Write the Jacobi equation as the first-order system
$$
\frac d{dt}\binom{Y}{Y'}=
\begin{pmatrix}0&I\\ \tilde K(t)&0\end{pmatrix}
\binom{Y}{Y'}.
$$
For the continuous equation, the transfer matrix across $I_j$ is
$$
I+h_j
\begin{pmatrix}0&I\\ \tilde K(s_j)&0\end{pmatrix}
+O\big(h_j^2+h_j\omega(\epsilon)\big),
$$
where $\omega(\epsilon)\coloneqq
\sup_{|s-t|\leq\epsilon}\|\tilde K(s)-\tilde K(t)\|\to0$.
For the atomic equation, free propagation through a segment of
length $\ell$ has transfer matrix
$\left(\begin{smallmatrix}I&\ell I\\0&I\end{smallmatrix}\right)$,
while a point mass $\rho\,\xi\xi^T$ has transfer matrix
$\left(\begin{smallmatrix}I&0\\ \rho\,\xi\xi^T&I\end{smallmatrix}\right)$.
The ordered product of these matrices across $I_j$ has the same
first-order term and an $O(h_j^2)$ remainder, since the sum of the
free lengths is $h_j$ and the sum of the point-mass matrices is
$h_j\tilde K(s_j)$.

All partial transfer products are uniformly bounded, since each
factor is $I+O(h_j)$ and $\sum_jh_j=1$. A telescoping comparison
therefore shows that the full transfer matrices differ by
$$
O\!\Big(\sum_jh_j^2+\omega(\epsilon)\sum_jh_j\Big)
=O\big(\epsilon+\omega(\epsilon)\big).
$$
Hence the full transfer matrices converge. In particular
$\tilde J_\epsilon\to\tilde J$, and, since $Y(1)$ is invertible,
the endpoint matrices $Q_\epsilon$ converge to $Q$. Thus
$\M_\epsilon\to\M$, and \eqref{eq:joint} passes to the limit.
\end{proof}

\begin{note}\label{note:joint-sharp}
The constant $\lambda(\CC)$ cannot be improved for
$\tilde K\geq0$. For a single point mass
$\tilde K=\rho\,\xi\xi^T\delta_t$, \eqref{eq:finite-identities}
gives $\M=(1-\tilde J^{-1})e_{t,\xi}e_{t,\xi}^T$, so
$\tr(\CC\M)/(1-\tilde J^{-1})
=\langle\CC e_{t,\xi},e_{t,\xi}\rangle$,
whose infimum over $t$ and $\xi$ is $\lambda(\CC)$
by \eqref{eq:e-t-xi}; smoothing the point mass, with
$(t,\xi)$ near the infimum, gives continuous $\tilde K\geq0$
with this quotient arbitrarily close to $\lambda(\CC)$.
\end{note}

\section{The Two Main Identities}\label{sec:two-point}

Here we establish the Minkowski--Green
identity, through which the mean curvature of $\Gamma$ enters,
and the degree identity, which encodes that $\Gamma$ bounds a
domain. Their Euclidean
versions, which motivate the construction, are in
\cite[\S2]{chen-ghomi-wang}; here we set them up directly in
Cartan--Hadamard manifolds, for more general weights.

\subsection{Estimates near the diagonal}\label{sec:diagonal}
We first record how fast the chord data $u$, $v$, $\cE_p$,
$\cE_q$, $J/r^{n-1}$ approach their limits as $q\to p$. These
rates are what make the singular integrands below locally
integrable, and they are valid at the $\C^{1,1}$ regularity at
which the identities will later be needed. Throughout, $B_\rho(p)$
is the (closed) geodesic ball in $M$ of radius $\rho$ about $p$, and
$|X|$ is the measure of a submanifold $X\subset M$ in its own
dimension.

\begin{lemma}\label{lem:diagonal-estimates}
Let $\Gamma$ be $\C^{1,1}$. There are $r_0,C>0$, depending only
on $\Gamma$ and on the geometry of $M$ near it, such that for
$0<r=\dist(p,q)<r_0$,
$$
|u|+|v|\leq Cr,
\qquad \|\cE_p\|+\|\cE_q\|\leq Cr,
\qquad |J/r^{n-1}-1|\leq Cr^2,
\qquad \|\tilde A-I\|\leq Cr^2,
$$
and $|\Gamma\cap B_\rho(p)|\leq C\rho^{n-1}$ for $0<\rho<r_0$.
\end{lemma}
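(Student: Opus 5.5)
The plan is to separate the estimates into two groups. The first group depends only on the geometry of $M$ near $\Gamma$: the bounds on $\cE$, $J$ and $\tilde A$. The second group uses the $\C^{1,1}$ regularity of $\Gamma$: the bounds on $u$, $v$ and the area of $\Gamma\cap B_\rho(p)$.

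Since $\Gamma$ is compact, fix a compact neighborhood $\mathcal K$ of $\Gamma$. Let $\kappa$ bound $\|R\|$ and $\|\nabla R\|$ on $\mathcal K$, and take $r_0$ smaller than the injectivity and convexity radii there, so chords of length $<r_0$ stay in $\mathcal K$.

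\textbf{Jacobi quantities.} In the normalized form \eqref{eq:normalized-jacobi}, $\|\tilde K\|\leq \kappa r^2$. Gronwall applied to the integral equation $Y(t)=tI+\int_0^t(t-s)\tilde K(s)Y(s)\,ds$ gives $\|Y-tI\|\leq C r^2$ and $\|Y'-I\|\leq Cr^2$ on $[0,1]$. Hence
\[
J/r^{n-1}=\det Y(1)=1+O(r^2),\qquad \tilde A=Y(1)^{-T}=I+O(r^2).
\]
For $\bcE_q$, use \eqref{eq:E-from-A} in normalized form: $r\bcE_q=Y'(1)Y(1)^{-1}-I=O(r^2)$, so $\|\bcE_q\|\leq Cr$. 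The same argument on the reversed chord, or the expression $\bcE_p=T_{12}^{-1}T_{11}-r^{-1}I$ from \eqref{eq:Ep-transfer}, handles $\bcE_p$. This step only needs a curvature bound, which even gives $O(r)$ directly; $\nabla R$ is not required.

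\textbf{Incidence cosines.} Work in normal coordinates at $p$, or in exponential coordinates of $T_pM$, where the metric is Euclidean up to $O(|x|^2)$. Since $\Gamma$ is $\C^{1,1}$ and compact, there is a uniform $r_0$ such that near every $p$ the surface $\Gamma$ is a graph over $T_p\Gamma$ of a function $f$ with $f(0)=0$, $\nabla f(0)=0$ and $\mathrm{Lip}(\nabla f)\leq C$. Then $|f(x)|\leq C|x|^2$, so the chord direction $\gamma'(0)$, which equals $\exp_p^{-1}(q)/r$, makes an angle $O(r)$ with $T_p\Gamma$. This gives $|u|\leq Cr$. For $v$, exchange the roles of $p$ and $q$: the graph representation is centered at $q$ with the same uniform constants, and this yields $|v|\leq Cr$.

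The area bound follows because $\Gamma\cap B_\rho(p)$ lies in the graph of $f$ over a ball of radius $\approx\rho$ in $T_p\Gamma$. Its area is therefore at most $(1+C\rho)^{n-1}|\B^{n-1}|\rho^{n-1}$, with the Jacobian of $\exp_p$ bounded by $1+O(\rho^2)$.

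\textbf{Main obstacle.} The main difficulty is making the graph representation uniform in $p$ under only $\C^{1,1}$ regularity, with a uniform radius and uniform Lipschitz constant for $\nabla f$ across the compact $\Gamma$. This requires care because the curvature of $\Gamma$ is only defined a.e. I would derive it from a uniform two-sided ball condition: a $\C^{1,1}$ compact hypersurface has a positive reach, and I would then use the tangent-plane deviation estimate $\dist(q,T_p\Gamma)\leq C\,r^2$ directly, without second derivatives.
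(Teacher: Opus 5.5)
Your proposal is correct and follows essentially the paper's proof. A uniform $\C^{1,1}$ graph representation in normal coordinates gives the bounds on $u$ and on $|\Gamma\cap B_\rho(p)|$, and the integral form of the Jacobi equation with bounded curvature gives the bounds on $\cE_p,\cE_q$, $J/r^{n-1}$ and $\tilde A$; your rescaling to $Y$ on $[0,1]$ is only cosmetic. The one small difference is that you obtain $|v|\leq Cr$ from $v(p,q)=u(q,p)$ by recentering the graph at $q$, whereas the paper transfers the bound on $u$ using the Lipschitz normal and the metric expansion at $q$. Both work, and your positive-reach justification of the uniform graph radius spells out a point the paper simply asserts.
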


\begin{proof}
As $\Gamma$ is compact and $\C^{1,1}$, there is $r_0>0$ with the
following property: in normal coordinates about any $p\in\Gamma$
in which $T_p\Gamma=\R^{n-1}\times\{0\}$ and $\nu_p=e_n$, the set
$\Gamma\cap B_{r_0}(p)$ lies in the graph of a $\C^{1,1}$
function $h\colon\{|y|<r_0\}\subset\R^{n-1}\to\R$,
$$
q=(y,h(y)),
\qquad
h(0)=0,\qquad \nabla h(0)=0,
\qquad |h(y)|\leq C|y|^2,
\qquad |\nabla h(y)|\leq C|y|,
$$
with $C$ independent of $p$. In these coordinates the chord
$\gamma$ from $p$ to such a $q$ is the segment $\gamma(t)=tq/r$,
$0\leq t\leq r$, with $r=|q|$, so $\gamma'\equiv q/r$ and
$\langle\gamma'(0),\nu_p\rangle=h(y)/r=O(r)$. The normal $\nu$ is
Lipschitz, $\Gamma$ being $\C^{1,1}$, so $|\nu_q-\nu_p|\leq Cr$,
and the metric coefficients at $q$ are $\delta_{ij}+O(r^2)$;
hence
$\langle\gamma'(r),\nu_q\rangle=\langle\gamma'(0),\nu_p\rangle+O(r)$,
and \eqref{eq:u-v} gives
$
|u|+|v|\leq Cr.
$
Next, $\|K(t)\|\leq C$ along every chord with $r<r_0$, by
compactness of $\Gamma$, and \eqref{eq:angular-jacobi} can be
written as
$$
A(t)=tI_{n-1}-\int_0^t(t-s)K(s)A(s)\,ds,
$$
from which
$
A(t)=tI_{n-1}+O(t^3)
$
and
$
A'(t)=I_{n-1}+O(t^2).
$
Therefore, by \eqref{eq:E-from-A},
$$
\bcE_q=A'(r)A(r)^{-1}-r^{-1}I_{n-1}=O(r),
\qquad
\frac{J}{r^{n-1}}=\frac{\det A(r)}{r^{n-1}}=1+O(r^2),
$$
and $\tilde A=rA(r)^{-T}=I+O(r^2)$; the bound on $\cE_p$ follows
in the same way along the chord from $q$ to $p$. Finally,
$\Gamma\cap B_\rho(p)$ is contained in the graph of $h$ over
$\{|y|\leq\rho\}$, and since $|\nabla h|\leq C\rho$ there, that
graph has area at most $C\rho^{n-1}$.
\end{proof}

Consequently, for every bounded function $\phi$ on $[-1,1]$, the
singular quantities that occur below satisfy
$$
\frac{|v|}{J}=O(r^{2-n}),
\qquad
\frac{|uv|}{J}=O(r^{3-n}),
\qquad
\frac{|v\phi(u)+u\phi(v)|}{J}=O(r^{2-n}),
$$
and the area bound gives
$\int_{\Gamma\cap B_\eps(p)}r^{-\alpha}=O(\eps^{\,n-1-\alpha})$
for $0\leq\alpha<n-1$. Thus each of these quantities is integrable
on $\Gamma$, and every integral over $\Gamma\times\Gamma$ below in
which one of them appears converges absolutely.

\subsection{The Minkowski--Green identity}
\label{sec:green}
The identities of this section will be applied, in
Sections~\ref{sec:singular-section} and~\ref{sec:CMC}, to
hypersurfaces which are in general only of class $\C^{1,1}$, so from
now on $\Gamma$ is allowed to be $\C^{1,1}$. Its normal $\nu$ is
then Lipschitz, so by Rademacher's theorem the mean curvature
$H=\operatorname{div}_\Gamma\nu$ of $\Gamma$, the sum of its
principal curvatures with respect to $\nu$, exists almost
everywhere and is essentially bounded; pointwise identities
involving $H$ are understood almost everywhere. 

For a function
$f$ defined near $\Gamma$, we write $\nabla f$ and $\nabla^2f$
for its ambient gradient and Hessian,
$\nabla_\Gamma f\coloneqq(\nabla f)^\top$ for the tangential
component of $\nabla f$ along $\Gamma$, and $\Delta_\Gamma$ for
the Laplace--Beltrami operator of $\Gamma$, so that
\begin{equation}\label{eq:laplacian-convention}
\Delta_\Gamma f
=
\tr_\Gamma\nabla^2f-H\langle\nabla f,\nu\rangle,
\end{equation}
where $\tr_\Gamma\nabla^2f=\Delta f-\nabla^2f(\nu,\nu)$ is the
trace of $\nabla^2f$ over $T\Gamma$.

The weights we consider are functions $\psi\colon(0,\infty)\times[-1,1]\to\R$ of the length
of a directed chord and of its terminal incidence cosine, of the
form
\begin{equation}\label{eq:angular-potential}
\partial_r\psi(r,v)=r^{2-n}c(r,v),\qquad
c(r,v)\coloneqq a(r)-b(r)v^2,
\end{equation}
where $a,b\in \C^1([0,\infty))$, $a-b>0$, and $b\geq0$; we set
$C_\psi\coloneqq a(0)$.  The
pole at $r=0$ is what produces nonzero integrals whose values do
not depend on $\Gamma$; the pole-free weights $\psi(r)=r^k$,
$k\geq1$, were considered by Banchoff--Pohl \cite[\S5]{BP}. For
fixed $q\in\Gamma$, define the function $\psi_q$ and the tangent
vector field $X_q$ on $\Gamma\setminus\{q\}$ by
$$
\psi_q(p)\coloneqq\psi\big(r(p,q),v(p,q)\big),
\qquad
X_q(p)\coloneqq\partial_r\psi\big(r(p,q),v(p,q)\big)\,\nabla_\Gamma r_q(p),
$$
where $v(p,q)$ is the incidence cosine of the chord $pq$ at $q$,
as in \eqref{eq:u-v}. Thus $X_q$ is the radial part of
$\nabla_\Gamma\psi_q$. Define
\begin{equation}\label{eq:tilde-laplacian-angular}
\tilde{\Delta}_\Gamma\psi_q\coloneqq\diver_\Gamma X_q
\quad\text{on }\Gamma\setminus\{q\};
\end{equation}
for $b=0$ one has $X_q=\nabla_\Gamma\psi_q$, and
$\tilde{\Delta}_\Gamma\psi_q$ is $\Delta_\Gamma\psi_q$ restricted
to $\Gamma\setminus\{q\}$. We use the radial part of
$\nabla_\Gamma\psi_q$ rather than all of it because
$\Delta_\Gamma\psi_q$ contains, in addition, the term
$\diver_\Gamma(\partial_v\psi\nabla_\Gamma v)$, and with it
$|\nabla_\Gamma v|^2$ and $\Delta_\Gamma v$, which are not among
the quantities that the comparison estimates of
Sections~\ref{sec:comparisons} and~\ref{subsec:joint} control;
the divergence of $X_q$ will involve only $r$, $u$, $v$, $w$, $H$,
and the Hessian remainder $\cE_p$, together with the prescribed
weight and its derivatives.

To compute $\tilde{\Delta}_\Gamma\psi_q$, write
\eqref{eq:hessian-remainders} as
$
\nabla^2 r_q(p)=\frac1r(g-dr_q\otimes dr_q)+\cE_p
$
and take the trace over an orthonormal basis of $T_p\Gamma$; by
\eqref{eq:tangential-gradients},
$$
\tr_\Gamma\nabla^2 r_q
=\frac1r\big(n-1-|\nabla_\Gamma r_q|^2\big)+\tr_\Gamma\cE_p
=\frac{n-2+u^2}{r}+\tr_\Gamma\cE_p.
$$
As $\langle\nabla r_q,\nu_p\rangle=u$,
\eqref{eq:laplacian-convention} yields
$\Delta_\Gamma r_q=(n-2+u^2)/r+\tr_\Gamma\cE_p-Hu$, while
Lemma~\ref{lem:angular} yields
$\langle\nabla_\Gamma v,\nabla_\Gamma r_q\rangle=(u/r)w$.
Inserting these in
$$
\diver_\Gamma X_q=\partial_r^2\psi|\nabla_\Gamma r_q|^2
+\partial_r\partial_v\psi\langle\nabla_\Gamma v,\nabla_\Gamma r_q\rangle
+\partial_r\psi\Delta_\Gamma r_q ,
$$
where $\psi$ and its derivatives are evaluated at $(r,v)$, we get
\begin{equation}\label{eq:curved-Ap-general}
\tilde{\Delta}_\Gamma\psi_q
=\partial_r^2\psi(1-u^2)+\frac ur\partial_r\partial_v\psi\,w
+\partial_r\psi\Big(\frac{n-2+u^2}{r}+\tr_\Gamma\cE_p-Hu\Big).
\end{equation}
After multiplication by $r^{n-1}$, and by
\eqref{eq:angular-potential}, this reads
\begin{equation}\label{eq:curved-D}
r^{n-1}\tilde{\Delta}_\Gamma\psi_q
=r\,\partial_r c(r,v)(1-u^2)+\big((n-1)u^2-rHu\big)c(r,v)
+u\,\partial_v c(r,v)w+rc(r,v)\tr_\Gamma\cE_p ,
\end{equation}
and the formula for $\tilde{\Delta}_\Gamma\psi_p$ is the same
with $p,q$ and $u,v$ interchanged, as $w(p,q)=w(q,p)$. Each term
on the right of \eqref{eq:curved-D} is $O(r)$: $u,v=O(r)$ and
$\tr_\Gamma\cE_p=O(r)$ by Lemma~\ref{lem:diagonal-estimates},
$H$ is bounded, and $a,b$ are $\C^1$. Hence
\begin{equation}\label{eq:Ap-bound}
|\tilde{\Delta}_\Gamma\psi_q|\leq Cr^{2-n},
\end{equation}
and, as $|\Gamma\cap B_\eps(q)|\leq C\eps^{n-1}$ by
Lemma~\ref{lem:diagonal-estimates},
$\tilde{\Delta}_\Gamma\psi_q\in L^1(\Gamma)$.

\begin{proposition}[Minkowski--Green identity]\label{lem:green}
Let $\Gamma$ be $\C^{1,1}$ and $\psi$ satisfy
\eqref{eq:angular-potential}. For every fixed $q\in\Gamma$,
\begin{equation}\label{eq:green-mass}
\int_\Gamma \tilde{\Delta}_\Gamma\psi_q=-C_\psi|\Sph^{n-2}|.
\end{equation}
\end{proposition}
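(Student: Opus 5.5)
We apply the divergence theorem on $\Gamma$ with a small geodesic ball around the pole excised. Since $X_q$ is a tangent field, $\diver_\Gamma X_q$ is its intrinsic divergence.

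\textbf{Excision and boundary term.} For small $\eps>0$ set $\Gamma_\eps\coloneqq\Gamma\setminus B_\eps(q)$. The vector field $X_q$ is Lipschitz on $\Gamma_\eps$, because $\Gamma$ is $\C^{1,1}$ and $\psi$ is $\C^1$ in $r$ with $\partial_r\psi$ Lipschitz there. Hence the divergence theorem for Lipschitz fields on a closed Lipschitz surface-with-boundary applies and gives
$$
\int_{\Gamma_\eps}\tilde\Delta_\Gamma\psi_q=-\int_{\partial\Gamma_\eps}\langle X_q,n_\eps\rangle,
$$
where $n_\eps$ is the unit conormal of $\Gamma_\eps$ pointing toward $q$. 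By \eqref{eq:Ap-bound}, $\tilde\Delta_\Gamma\psi_q\in L^1(\Gamma)$, so the left side tends to $\int_\Gamma\tilde\Delta_\Gamma\psi_q$ as $\eps\to0$. It remains to show that the boundary integral tends to $C_\psi|\Sph^{n-2}|$.

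\textbf{Evaluation of the limit.} The set $\partial\Gamma_\eps$ is the level set $\{r_q=\eps\}\cap\Gamma$. For almost every $\eps$ it is a Lipschitz $(n-2)$-manifold, and on it the conormal is $n_\eps=-\nabla_\Gamma r_q/|\nabla_\Gamma r_q|$. Therefore
$$
\langle X_q,n_\eps\rangle=-\partial_r\psi\,|\nabla_\Gamma r_q|=-\eps^{2-n}c(\eps,v)\sqrt{1-u^2}.
$$
By Lemma~\ref{lem:diagonal-estimates}, $u,v=O(\eps)$, so $c(\eps,v)\sqrt{1-u^2}=a(0)+O(\eps)$. We now need
$$
|\partial\Gamma_\eps|=\eps^{n-2}|\Sph^{n-2}|(1+o(1)).
$$
To see this, use the normal-coordinate graph description from the proof of Lemma~\ref{lem:diagonal-estimates}. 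In those coordinates $\partial\Gamma_\eps$ is the set where $|y|^2+h(y)^2=\eps^2$. Since $h=O(|y|^2)$ and $\nabla h=O(|y|)$, this is a radial graph $|y|=\eps(1+O(\eps^2))$ over $\Sph^{n-2}$ whose angular derivative is $O(\eps^2)$. The metric is $\delta+O(\eps^2)$, so the $(n-2)$-measure is $\eps^{n-2}|\Sph^{n-2}|(1+O(\eps))$. Combining these facts, the boundary integral equals $C_\psi|\Sph^{n-2}|+O(\eps)$, which gives \eqref{eq:green-mass}.

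\textbf{Main obstacle.} The delicate point is justifying the divergence theorem at $\C^{1,1}$ regularity. Two facts are needed:
\begin{itemize}
\item[(i)] $\diver_\Gamma X_q$, defined almost everywhere via Rademacher, agrees with formula \eqref{eq:curved-Ap-general}. This uses that $H$ exists almost everywhere.
\item[(ii)] The excision boundary is regular enough.
\end{itemize}
For (ii) I would first try to avoid regularity questions about the boundary by using a smooth cutoff $\chi(r_q/\eps)$ in place of a sharp ball. One integrates $\int_\Gamma\chi\,\diver X_q=-\int_\Gamma\langle\nabla_\Gamma\chi,X_q\rangle$; this needs only Lipschitz $X_q$, where integration by parts is standard via approximation of $\Gamma$ by local Lipschitz graphs. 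The right side equals
$$
-\eps^{-1}\int\chi'(r_q/\eps)\,r^{2-n}c\,(1-u^2).
$$
Applying the coarea formula with $|\nabla_\Gamma r_q|=\sqrt{1-u^2}$ converts this into an average over level sets of the quantity computed above. The same limit $C_\psi|\Sph^{n-2}|$ results, since $\int\chi'(s)\,ds=-1$.
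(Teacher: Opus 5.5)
Your argument is the paper's proof. You excise $B_\eps(q)$, apply the divergence theorem to the Lipschitz field $X_q$ on the $\C^{1,1}$ domain $\Gamma\setminus B_\eps(q)$, compute the flux through $\{r_q=\eps\}$ from the graph representation in the proof of Lemma~\ref{lem:diagonal-estimates}, and pass to the limit using $\tilde\Delta_\Gamma\psi_q\in L^1(\Gamma)$ from \eqref{eq:Ap-bound}. The paper only differs in presentation: it writes the computation for smooth $\Gamma$ first and then checks that each ingredient survives at $\C^{1,1}$ regularity.

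There is a sign slip you should fix.
\begin{itemize}
\item The conormal of $\Gamma_\eps$ pointing toward $q$ is its \emph{outward} conormal. So the divergence theorem reads $\int_{\Gamma_\eps}\tilde\Delta_\Gamma\psi_q=+\int_{\partial\Gamma_\eps}\langle X_q,n_\eps\rangle$, with no minus sign.
\item Your pointwise value $\langle X_q,n_\eps\rangle=-\eps^{2-n}c(\eps,v)\sqrt{1-u^2}$ is correct. With it, the boundary integral tends to $-C_\psi|\Sph^{n-2}|$, not $+C_\psi|\Sph^{n-2}|$.
\item As written, the extra minus sign in the divergence theorem and the wrong value of the boundary integral cancel. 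Your chain only reaches \eqref{eq:green-mass} by accident.
\item A quick check: on the unit sphere in $\R^3$ with $\psi=\log r$ (so $C_\psi=1$), one has $\Delta_\Gamma\psi_q\equiv-1/2$. Its integral is $-2\pi=-C_\psi|\Sph^1|$.
\end{itemize}

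The cutoff variant has the same issue. For $\chi X_q$ to be Lipschitz, $\chi$ must vanish near $q$ and equal $1$ away from it, so $\int\chi'(s)\,ds=+1$, not $-1$. Then $-\eps^{-1}\int_\Gamma\chi'(r_q/\eps)\,r^{2-n}c\,(1-u^2)\to-C_\psi|\Sph^{n-2}|$ directly, by the coarea computation you describe.

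Finally, the restriction to ``almost every $\eps$'' is unnecessary. Near $q$ one has $|\nabla_\Gamma r_q|=\sqrt{1-u^2}=1+O(\eps^2)\neq0$, so every small level set $\{r_q=\eps\}\cap\Gamma$ is a $\C^{1,1}$ hypersurface of $\Gamma$.
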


\begin{proof}
Assume first that $\Gamma$ is smooth. For small $\eps>0$ the set
$U_\eps\coloneqq\Gamma\setminus B_\eps(q)$ has $\C^1$ boundary,
with outward conormal
$\eta_\eps\coloneqq-\nabla_\Gamma r_q/|\nabla_\Gamma r_q|$. On
$\partial U_\eps$ we have $v=O(\eps)$, and $a$ is $\C^1$, so
\eqref{eq:angular-potential} gives
$$
\langle X_q,\eta_\eps\rangle
=-\partial_r\psi(\eps,v)|\nabla_\Gamma r_q|
=-\big(C_\psi\,\eps^{2-n}+O(\eps^{3-n})\big)|\nabla_\Gamma r_q|.
$$
Here $|\nabla_\Gamma r_q|=1+O(\eps^2)$, and the graph
representation of $\Gamma$ near $q$ from the proof of
Lemma~\ref{lem:diagonal-estimates} gives
$
|\partial U_\eps|=|\Sph^{n-2}|\eps^{n-2}\big(1+O(\eps^2)\big).
$
Hence
\begin{equation*}
\int_{\partial U_\eps}\langle X_q,\eta_\eps\rangle
=-C_\psi|\Sph^{n-2}|+o(1).
\end{equation*}
The divergence theorem on $U_\eps$ gives
$
\int_{U_\eps}\tilde{\Delta}_\Gamma\psi_q
=\int_{\partial U_\eps}\langle X_q,\eta_\eps\rangle,
$
and letting $\eps\to0$, which is permitted since
$\tilde{\Delta}_\Gamma\psi_q\in L^1(\Gamma)$, proves
\eqref{eq:green-mass}.

Now let $\Gamma$ be $\C^{1,1}$, and put $X\coloneqq X_q$ for a
fixed $q\in\Gamma$. This is a locally Lipschitz tangent field on
$\Gamma\setminus\{q\}$, because $v(\cdot,q)$ is smooth away from
$q$ and
$\nabla_\Gamma r_q=\nabla r_q-\langle\nabla r_q,\nu\rangle\nu$ is
Lipschitz. In a local $\C^{1,1}$ parametrization of $\Gamma$, the
metric coefficients $g_{ij}$ are Lipschitz and
$$
\tilde{\Delta}_\Gamma\psi_q=\operatorname{div}_\Gamma X
=\frac{1}{\sqrt{\det g}}\,\partial_i\big(\sqrt{\det g}\,X^i\big),
$$
with $\sqrt{\det g}\,X^i$ locally Lipschitz; as the distributional
derivative of a Lipschitz function is its almost-everywhere
derivative \cite[\S4.2.3]{evans-gariepy2015},
$\tilde{\Delta}_\Gamma\psi_q$ is $L^\infty_{\rm loc}$ on
$\Gamma\setminus\{q\}$. Wherever $\nu$ is differentiable, the
computation of $\operatorname{div}_\Gamma X$ that gives
\eqref{eq:laplacian-convention} for smooth $\Gamma$ goes through
unchanged, so \eqref{eq:laplacian-convention}, and with it
\eqref{eq:curved-Ap-general} and \eqref{eq:Ap-bound}, hold almost
everywhere on $\Gamma\setminus\{q\}$. Lastly, for a Lipschitz
field $X$ on the $\C^{1,1}$ domains $U_\eps$, the divergence
theorem
$\int_{U_\eps}\operatorname{div}_\Gamma X
=\int_{\partial U_\eps}\langle X,\eta_\eps\rangle$ remains valid
\cite[\S4.3]{evans-gariepy2015}, and the preceding flux computation used nothing beyond
Lemma~\ref{lem:diagonal-estimates}, which holds at $\C^{1,1}$
regularity. So the identity holds for $\Gamma$.
\end{proof}

As we pointed out in \cite[\S2.2]{chen-ghomi-wang}, the name of the identity here
refers to Minkowski's formula
$\int_\Gamma H\langle x-q,\nu\rangle=(n-1)|\Gamma|$
\cite{hsiung1954,montiel-ros2009}, which the same computation
produces in $\R^n$ for the pole-free weight $\psi(r)=r^2/2$, and
to the point-mass singularity of $\psi_q$, which is that of a
Green function.

\subsection{The degree identity}\label{sec:degree}
Fix $p\in\Gamma$ and let
$$
P_p(q)\coloneqq\frac{\exp_p^{-1}(q)}{\dist(p,q)}\in\Sph_p
$$
be radial projection from $p$; thus $P_p(q)=\gamma'(0)$ and
$u=\langle P_p(q),-\nu_p\rangle$. Let
$
\Sph_p^-
$
be the open hemisphere centered at $-\nu_p$, so that
$\theta\in\Sph_p^-$ exactly when the geodesic ray
$\exp_p(t\theta)$, $t>0$, initially enters $\Omega$. The points
of $\Gamma$ on this ray form $P_p^{-1}(\theta)$, and at a
transverse intersection $q$ the sign of
$v(p,q)=\langle\gamma'(r),\nu_q\rangle$ tells whether the ray is
leaving $\Omega$ ($v>0$) or entering it ($v<0$). By the area
formula, applied to $\Gamma$ and to $\{\textup{Jac}(P_p)=0\}$,
the intersections are transverse and finite for almost every
$\theta$; and as the ray alternately exits and enters $\Omega$,
\begin{equation*}
\sum_{q\in P_p^{-1}(\theta)}\sgn v(p,q)
=\1_{\Sph_p^-}(\theta),
\end{equation*}
the indicator function of $\Sph_p^-$. This is a special case of
the formula for the winding number of $\Gamma$ about a point,
cf.\ \cite[(3.20)]{hoisington2021}, with the base point on
$\Gamma$. The differential of $P_p$ at $q$ is $A(r)^{-1}$
composed with the orthogonal projection
$T_q\Gamma\to\gamma'(r)^\perp$, whose determinant is
$\langle\nu_q,\gamma'(r)\rangle=v$; hence
$\textup{Jac}(P_p)=v/J$ on $\Gamma\setminus\{p\}$, cf.\
\cite[Cor.~2.13]{hoisington2021}. Let
$\phi\colon[-1,1]\to\R$ be a bounded Borel function. As
$u=\langle P_p(q),-\nu_p\rangle$ is a function of $P_p(q)$
alone, the area formula applied to $\phi(u)$ gives
\begin{equation*}
\int_\Gamma \phi(u)\frac{v}{J}
=\int_{\Sph_p}\phi\big(\langle\theta,-\nu_p\rangle\big)
\sum_{q\in P_p^{-1}(\theta)}\sgn v(p,q)
=\int_{\Sph_p^-}\phi\big(\langle\theta,-\nu_p\rangle\big)
=|\Sph^{n-2}|\,I_\phi ,
\end{equation*}
where
$$
I_\phi\coloneqq\int_0^1\phi(t)(1-t^2)^{(n-3)/2}\,dt,
$$
the integration over $\Gamma$ being in $q$ for fixed $p$, and
the last step being the slicing of the hemisphere by
$t=\langle\theta,-\nu_p\rangle$. When $\Gamma$ is only
$\C^{1,1}$, the area formula is applied on
$\Gamma\setminus B_\eps(p)$, where $P_p$ is Lipschitz, and
$\eps\to0$ is taken using the integrability of $|v|/J$
(Lemma~\ref{lem:diagonal-estimates}).

\begin{proposition}[Degree identity]\label{prop:boundary-integral}
Let $\Gamma$ be $\C^{1,1}$. For every bounded Borel function
$\phi$ on $[-1,1]$,
\begin{equation}\label{eq:boundary-integral}
\int_{\Gamma\times\Gamma}\frac{v\phi(u)+u\phi(v)}{J}
=2 I_\phi|\Sph^{n-2}|\,|\Gamma|.
\end{equation}
\end{proposition}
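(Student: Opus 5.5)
The identity follows from the single-point formula established just before the statement. For fixed $p\in\Gamma$ that formula gives
\[
\int_\Gamma \phi(u)\frac{v}{J}\,dq=|\Sph^{n-2}|\,I_\phi .
\]
The plan has three steps: integrate this in $p$, exchange the roles of $p$ and $q$ to handle the second term, and justify Fubini.

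First, integrating over $p\in\Gamma$ gives
\[
\int_{\Gamma\times\Gamma}\frac{v\phi(u)}{J}=I_\phi|\Sph^{n-2}|\,|\Gamma| .
\]
Second, for the other term, swap the names of the two points. Under the interchange $(p,q)\mapsto(q,p)$, the chord from $q$ to $p$ is the reverse of $\gamma$. Its initial incidence cosine is
\[
\langle-\gamma'(r),-\nu_q\rangle=\langle\gamma'(r),\nu_q\rangle=v(p,q),
\]
and its terminal incidence cosine is
\[
\langle-\gamma'(0),\nu_p\rangle=u(p,q).
\]
Thus $u(q,p)=v(p,q)$ and $v(q,p)=u(p,q)$. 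Moreover $J(q,p)=J(p,q)$ by the symmetry of the polar Jacobian \cite[Lem.~5]{yau1975}. The product measure on $\Gamma\times\Gamma$ is invariant under the swap, so
\[
\int_{\Gamma\times\Gamma}\frac{u\phi(v)}{J}
=\int_{\Gamma\times\Gamma}\frac{v(q,p)\,\phi(u(q,p))}{J(q,p)}
=\int_{\Gamma\times\Gamma}\frac{v\phi(u)}{J}.
\]
Adding the two equal integrals yields $2I_\phi|\Sph^{n-2}|\,|\Gamma|$.

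Third, the only real point is that these iterated integrals may be treated as a single integral over $\Gamma\times\Gamma$, for which Fubini--Tonelli must apply. Since $\phi$ is bounded, it suffices that $|v|/J$ be integrable on $\Gamma\times\Gamma$. By Lemma~\ref{lem:diagonal-estimates} and $J\ge r^{n-1}$, we have $|v|/J\le C r^{2-n}$ for $r<r_0$, and $|v|/J\le r_0^{1-n}$ for $r\ge r_0$. The area bound gives $\int_{\Gamma}r_p^{2-n}\,dq\le C$ uniformly in $p$, since $2-n>-(n-1)$; to see this, sum the estimate $\int_{\Gamma\cap B_\eps(p)}r^{-\alpha}=O(\eps^{n-1-\alpha})$ over dyadic shells. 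Hence
\[
\sup_p\int_\Gamma\frac{|v|}{J}\,dq<\infty ,
\]
and Tonelli gives absolute integrability on the compact set $\Gamma\times\Gamma$. The same bound applies to $|u|/J$.

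For $\C^{1,1}$ hypersurfaces, the single-point formula was already justified above by the limit $\eps\to0$. So no further argument is needed beyond the measurability of the integrand, which is continuous off the diagonal.
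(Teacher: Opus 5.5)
Your proposal is correct and follows the paper's proof. You integrate the single-point formula over $p$, then use $u(q,p)=v(p,q)$, $v(q,p)=u(p,q)$ and $J(q,p)=J(p,q)$ to identify the second integral with the first, and add; your Fubini--Tonelli justification via $|v|/J\le Cr^{2-n}$ and the uniform area bound is the same absolute-integrability remark the paper records after Lemma~\ref{lem:diagonal-estimates}. One small correction: for merely Borel $\phi$ the integrand is Borel measurable off the diagonal, not continuous, but measurability is all the argument needs.
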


\begin{proof}
Since the right side of the preceding identity is independent of
$p$, integration over $p\in\Gamma$ gives
$\int_{\Gamma\times\Gamma} \phi(u)v/J=I_\phi|\Sph^{n-2}||\Gamma|$.
Exchanging $p$ and $q$ exchanges $u$ and $v$ and leaves $J$
unchanged, since $J(p,q)=J(q,p)$; so the same holds with
$u\phi(v)$ in place of $\phi(u)v$, and adding the two identities
gives \eqref{eq:boundary-integral}.
\end{proof}

For $\phi(t)=t^{n-2}$ the right
side of \eqref{eq:boundary-integral} is $2^{2-n}|\Sph^{n-1}||\Gamma|$, the form of the degree
identity used in \cite{chen-ghomi-wang}; the freedom in $\phi$ here
will be needed in dimensions $4$ and $6$ through $9$.

\section{Singular Isoperimetric Regions}\label{sec:singular-section}
\label{subsec:singular-removal}

An \emph{isoperimetric region} in a geodesic ball $B\subset M$ is
a set which minimizes perimeter among the
subsets of $B$ for a given volume $0<V<|B|$.
Perimeter is measured in $M$, including the part of the boundary
on $\partial B$. For
$n\geq8$, the boundary of an isoperimetric region may have a singular set of codimension at
least $7$. Here we show that the identities of
Section~\ref{sec:two-point} hold across such a set. 

\begin{proposition}\label{prop:singular-identities}
Let $\Gamma$ be the boundary of an isoperimetric region in a
geodesic ball, and $S$ be its singular set. Then, for every
$\psi$ as in \eqref{eq:angular-potential} and every Lipschitz
function $\phi$ on $[-1,1]$ with $\phi(0)=0$, the identities of
Propositions~\ref{lem:green} and~\ref{prop:boundary-integral}
hold for $\Gamma$: for every $q\in\Gamma\setminus S$,
$$
\int_\Gamma\tilde{\Delta}_\Gamma\psi_q=-C_\psi|\Sph^{n-2}|,
\qquad\text{and}\qquad
\int_{\Gamma\times\Gamma}\frac{v\phi(u)+u\phi(v)}{J}
=2I_\phi|\Sph^{n-2}|\,|\Gamma|,
$$
and both integrands are absolutely integrable on $\Gamma\times\Gamma$.
\end{proposition}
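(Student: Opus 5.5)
Both identities will be obtained by removing a small neighborhood of the singular set $S$, applying the $\C^{1,1}$ arguments of Propositions~\ref{lem:green} and~\ref{prop:boundary-integral} on the rest, and letting the neighborhood shrink. Four regularity facts are used. $\Gamma\setminus S$ is a $\C^{1,1}$ hypersurface, smooth on the free part $\Gamma\cap\operatorname{int}B$ and $\C^{1,1}$ where it touches $\partial B$. The singular set $S$ is closed, lies in the free part, and satisfies $\mathcal H^{n-3}(S)=0$, in fact $\dim S\leq n-8$. The area bound $|\Gamma\cap B_\rho(x)|\leq C\rho^{n-1}$ holds for all $x\in M$ and small $\rho$, by monotonicity, since the mean curvature of $\Gamma$ is bounded. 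Finally, $\Gamma$ is the reduced boundary of a set of finite perimeter, so it still bounds $\Omega$.

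\textbf{Step 1: the Minkowski--Green identity.} Fix $q\in\Gamma\setminus S$. Because $S$ is compact with $\mathcal H^{n-3}(S)=0$, for each $\delta>0$ there is a cutoff $\eta_\delta$ with the following properties:
\begin{itemize}
\item $\eta_\delta$ is Lipschitz, $0\leq\eta_\delta\leq1$, and $\eta_\delta=1$ near $S$;
\item $\eta_\delta$ is supported in a $\delta$-neighborhood of $S$ that stays away from $q$;
\item $\int_\Gamma|\nabla_\Gamma\eta_\delta|\to0$ as $\delta\to0$.
\end{itemize}
To build it, cover $S$ by balls $B_{\rho_i}(x_i)$ with $\sum\rho_i^{n-2}$ small, and use the area bound to control $\int|\nabla\eta|\lesssim\sum\rho_i^{-1}\rho_i^{n-1}$. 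Applying the divergence theorem to $(1-\eta_\delta)X_q$ on $\Gamma\setminus B_\eps(q)$ gives
$$\int(1-\eta_\delta)\tilde\Delta_\Gamma\psi_q-\int\langle\nabla_\Gamma\eta_\delta,X_q\rangle=-C_\psi|\Sph^{n-2}|+o_\eps(1).$$
Near $S$ we have $|X_q|\leq C$, since $S$ is at positive distance from $q$ and $\partial_r\psi$ is bounded there. So the second term tends to $0$. For the first term, the pointwise formula \eqref{eq:curved-D} shows that $\tilde\Delta_\Gamma\psi_q$ is bounded by $C(1+|\tr_\Gamma\cE_p|+|H|)$ away from $q$. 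The quantity $\cE_p$ is smooth in $p\in M\setminus\{q\}$, and $H$ is bounded, so this bound is integrable. Absolute integrability plus dominated convergence then give the identity.

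\textbf{Step 2: the degree identity.} I would redo the area-formula argument for fixed $p\in\Gamma\setminus S$, which is a set of full measure. The required counting relation $\sum_{q\in P_p^{-1}(\theta)}\sgn v=\1_{\Sph_p^-}(\theta)$ needs rays from $p$ to meet $\Gamma$ transversally, at finitely many points, and only at regular points, for almost every $\theta$. Rays hitting $S$ form a null set of directions, because $P_p(S)$ has $\mathcal H^{n-1}$-measure zero ($P_p$ is Lipschitz away from $p$ and $\mathcal H^{n-1}(S)=0$). The alternation of entering and exiting holds for a set of finite perimeter along almost every ray, by slicing. The hypothesis $\phi(0)=0$ with $\phi$ Lipschitz gives $|\phi(u)|\leq C|u|$. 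This makes the integrand $|v\phi(u)+u\phi(v)|/J\leq C|uv|/J$, which handles integrability near the diagonal. Off the diagonal the bound $J\geq r^{n-1}$ suffices. Integrating over $p$ and symmetrizing then proceeds as in Proposition~\ref{prop:boundary-integral}.

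\textbf{Main obstacle: diagonal estimates near $S$.} The hardest part is absolute integrability on $\Gamma\times\Gamma$ near $S$, where Lemma~\ref{lem:diagonal-estimates} gives no uniform control. There, $|u|,|v|\leq Cr$ fails when $p$ approaches $S$. For the degree integrand I would use only $|v|,|u|\leq1$ together with $r^{2-n}$-type bounds: $\int_\Gamma r_p^{2-n}$ is uniformly bounded in $p$ by the area bound (dyadic shells), which already makes $|v|/J$ integrable. For the Green integrand I only need integrability for each fixed $q\notin S$, which Step 1 provides. The remaining delicate point is controlling $\tr_\Gamma\cE_p$ uniformly, which I expect to handle via the Hessian bound of $r_q$ on compact sets away from $q$.
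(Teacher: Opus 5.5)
\textbf{Gap: absolute integrability on $\Gamma\times\Gamma$ near $S$.} Your Step~1 is sound for each fixed regular $q$; excising $B_\eps(q)$ and cutting off near $S$ is a legitimate, even simpler, substitute for the paper's regularization $X_{q,\eps}$. The fixed-pole part of Step~2 is also fine. What fails is the absolute integrability on $\Gamma\times\Gamma$. This is part of the statement. It is also exactly what you need to integrate the degree identity in $p$ and reverse the endpoints, which is a Fubini step, and later to integrate the Green identity in $q$.

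Your two remedies do not work. First, with only $|v|\le1$ you get $|v|/J\le r^{1-n}$, not $r^{2-n}$. The function $r^{1-n}$ is not integrable on an $(n-1)$-dimensional set: each dyadic shell contributes a constant, already on a hyperplane. So the uniform bound on $\int_\Gamma r^{2-n}$ is off by one power of $r$. Likewise, $|v\phi(u)|/J\le C|uv|/r^{n-1}$ is harmless only if $|uv|\lesssim r^2$. The constant in $|u|,|v|\le Cr$ comes from local $\C^{1,1}$ bounds of $\Gamma$, and these blow up as the base point approaches $S$. Second, fixed-$q$ integrability of the Green integrand does not give joint integrability. Moreover, the troublesome terms in \eqref{eq:curved-D} are not $\tr_\Gamma\cE_p$, which is $O(r)$ uniformly because it is an ambient quantity. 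They are $(n-1)u^2c/r^{n-1}$ and $u\,\partial_vc\,w/r^{n-1}=O(|uv|/r^{n-1})$.

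\textbf{Missing idea.} You need the uniform estimate $\sup_{q\in\Gamma\setminus S}\int_\Gamma u^2/r^{n-1}\,dp<\infty$, which is Lemma~\ref{lem:singular-angles}. The paper proves it as follows.
\begin{itemize}
\item Integrate $\diver_\Gamma Y_{q,\eps}$ across $S$ using the cutoffs, where $Y_{q,\eps}=\nabla_\Gamma(r_q^2/2)/(r_q^2+\eps^2)^{(n-1)/2}$.
\item Hessian comparison makes every term except the mean-curvature term nonnegative, so $(n-1)\int_\Gamma r^2u^2/(r^2+\eps^2)^{(n+1)/2}\le\|H\|_{L^\infty}\int_\Gamma r^{2-n}$, uniformly in $q$ and $\eps$.
\item Monotone convergence as $\eps\downarrow0$ finishes.
\end{itemize}
This is essentially the normal-component term of the monotonicity formula, which you already invoked only for the area bound.

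With this estimate, $|\tilde\Delta_\Gamma\psi_q|\le C\big(r^{2-n}+(u^2+v^2)/r^{n-1}\big)$ is integrable on $\Gamma\times\Gamma$. So is $|v\phi(u)|/J\le\tfrac12\operatorname{Lip}(\phi)(u^2+v^2)/r^{n-1}$, and this is where the hypothesis $\phi(0)=0$ is really used.
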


By \cite[Lem.~7.2(i)--(ii)]{ghomi-spruck2022}, using the
standard regularity theory \cite{gmt1983,Stredulinsky-Ziemer1997,morgan2003}, we know that
$S$ is compact,
disjoint from $\partial B$, and of 
dimension at most $n-8$, so $\mathcal H^{n-2}(S)=0$, where
$\mathcal H^k$ denotes $k$-dimensional Hausdorff measure; and
$\Gamma\setminus S$, the \emph{regular part} of $\Gamma$, is
locally $\C^{1,1}$, with bounded mean curvature $H$. We retain the
notation $\nabla_\Gamma$, $\Delta_\Gamma$, $\diver_\Gamma$, and
$\tr_\Gamma$ for the differential operators on the regular part;
in particular $X_q$ and $\tilde{\Delta}_\Gamma\psi_q=\diver_\Gamma X_q$
are defined on $\Gamma\setminus(S\cup\{q\})$ by
\eqref{eq:tilde-laplacian-angular}. Integrals over $\Gamma$ are
with respect to perimeter measure, which agrees with area on the
regular part and gives $S$ measure zero; its support is $\Gamma$,
so the regular part is dense in $\Gamma$ \cite{maggi2012}.
Throughout this section, $\Gamma$, $S$, $\psi$, and $\phi$ are as
in Proposition~\ref{prop:singular-identities}, and $\Omega^*$ is the isoperimetric region.

\begin{lemma}\label{lem:singular-area}\label{lem:singular-cutoffs}
There are $r_0,C>0$ such that $|\Gamma\cap B_\rho(x)|\leq C\rho^{n-1}$
for $x\in\Gamma$ and $0<\rho<r_0$; in particular
$\sup_{q\in\Gamma}\int_\Gamma r(p,q)^{2-n}<\infty$.
\end{lemma}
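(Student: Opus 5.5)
We will obtain the upper area density bound from the monotonicity formula for the perimeter of $\Omega^*$, which is available because $\Gamma$ has bounded generalized mean curvature away from $\partial B$ and is area-minimizing relative to an obstacle near $\partial B$. The second assertion then follows by the usual layer-cake computation.

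First we treat points $x\in\Gamma$ at distance at least some $\delta>0$ from $\partial B$. There $\Gamma$ is the support of the varifold associated to the reduced boundary. Its first variation is represented by $H\nu$ with $H$ constant: this is the free CMC part, and $\mathcal H^{n-2}(S)=0$ lets us extend the first-variation formula across $S$ by a cutoff argument. Allard's monotonicity formula in a Riemannian manifold (after isometric embedding, or directly with the curvature bounds of $M$ on the compact ball $\bar B$) then says that $e^{C\rho}\,|\Gamma\cap B_\rho(x)|/\rho^{n-1}$ is nondecreasing for $\rho<r_0$. Evaluating at $\rho=r_0$ gives
$$
|\Gamma\cap B_\rho(x)|\leq e^{Cr_0}\frac{|\Gamma|}{r_0^{n-1}}\rho^{n-1}.
$$
Alternatively, and more simply, one can use a comparison argument. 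Replacing $\Omega^*$ by $\Omega^*\setminus B_\rho(x)$, or by $\Omega^*\cup B_\rho(x)$, and restoring the volume by a fixed small deformation elsewhere (whose perimeter cost is linear in the volume change, $O(\rho^n)$) gives
$$
P(\Omega^*;B_\rho(x))\leq|\partial B_\rho(x)|+C\rho^n\leq C\rho^{n-1}.
$$
The Euclidean-type bound $|\partial B_\rho|\leq C\rho^{n-1}$ holds because $\bar B$ is compact.

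This comparison argument also handles points near $\partial B$, and I would use it uniformly for all $x\in\Gamma$. Since $B$ is a geodesic ball in a Cartan--Hadamard manifold, it is convex, so $\Omega^*\setminus B_\rho(x)\subset B$ is still admissible. The volume must then be restored inside $B$, away from $x$. For this we fix once and for all a point $y$ in the regular free part of $\Gamma$ and a small normal variation supported near $y$. This variation changes the volume by any prescribed amount $|t|\leq t_0$ at perimeter cost at most $C|t|$. Here $\rho<r_0$ is chosen small enough that $B_\rho(x)$ misses a neighborhood of $y$, or we use two such points $y$ and pick one far from $x$. The removed volume is at most $|B_\rho|\leq C\rho^n$, so we obtain
$$
P(\Omega^*)\leq P(\Omega^*)-P(\Omega^*;B_\rho(x))+\mathcal H^{n-1}(\partial B_\rho(x))+C\rho^n.
$$
Hence $|\Gamma\cap B_\rho(x)|\leq C\rho^{n-1}$, where we interpret $|\Gamma\cap B_\rho|$ as the perimeter measure and slightly enlarge the radius to handle boundary overlaps. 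The main technical obstacle is making this volume-fixing uniform in $x$ and admissible within $B$. I expect the fixed-variation trick at interior regular points (where $\Gamma$ lies inside $B$) to suffice.

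Finally, for $0<\rho<r_0$ the layer-cake formula gives
$$
\int_\Gamma r(p,q)^{2-n}\,dp\leq r_0^{2-n}|\Gamma|+\int_0^{r_0}(n-2)s^{1-n}|\Gamma\cap B_s(q)|\,ds\leq C+C\int_0^{r_0}ds,
$$
which is bounded uniformly in $q$.
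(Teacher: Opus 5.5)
Your comparison argument is essentially the paper's proof: delete $\Omega^*\cap B_\rho(x)$ at a cost of at most $|\partial B_\rho(x)|$, restore the lost volume $O(\rho^n)$ by a fixed variation in a regular free patch at linear perimeter cost, invoke minimality, and sum over shells. The paper applies it only near $S$ and uses the local $\C^{1,1}$ graphs elsewhere, whereas your two-patch device makes it uniform in $x$; the deletion needs no convexity of $B$. Two small points: you should justify that a regular free patch exists, which the paper does (otherwise the perimeter measure would vanish in the connected interior of $B$, forcing $\Omega^*$ to be empty or all of $B$, contrary to $0<V<|B|$), and you should drop the monotonicity aside, since extending the first variation across $S$ by cutoffs would itself use the density bound being proved.
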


\begin{proof}
There is a regular free-boundary patch: otherwise the perimeter
measure would vanish in the connected interior of $B$, forcing
$\Omega^*$ to be either empty or all of $B$ up to a null set, contrary
to $0<V<|B|$. Choose such a patch with compact closure away from $S$
and $\partial B$. A smooth deformation supported there changes
volume at a nonzero rate. The inverse function theorem therefore
allows a volume change $\delta$ to be restored at perimeter cost
at most $C|\delta|$, for $|\delta|$ small.

If $x$ lies sufficiently near $S$, a small ball $B_\rho(x)$ is
disjoint from the chosen patch and from $\partial B$. For almost
every $\rho$, deleting $\Omega^*\cap B_\rho(x)$ removes its interior
perimeter and adds at most the area of $\partial B_\rho(x)$;
this is the usual restriction formula for sets of finite perimeter
\cite{maggi2012}. The removed volume is at most $C\rho^{n}$.
Restoring it in the fixed patch and using minimality gives
$$
|\Gamma\cap B_\rho(x)|\leq C\rho^{n-1}+C\rho^{n}.
$$
Approximation of the radius gives the same bound for every small
$\rho$. Away from a neighborhood of $S$, compactness and local
$\C^{1,1}$ graphs supply a uniform bound. This proves the estimate.

On a dyadic annulus of radius $\rho$ about $q$, the integral of
$r^{2-n}$ is at most $C\rho^{2-n}\rho^{n-1}=C\rho$. Summing these
bounds proves the kernel bound near $q$, uniformly in $q$; the
remaining integral is bounded by $r_0^{2-n}|\Gamma|$.
\end{proof}

\begin{lemma}\label{lem:cutoffs}
There is a sequence $\eta_j$ of restrictions to
$\Gamma$ of smooth ambient functions such that $0\leq\eta_j\leq1$,
$\eta_j$ vanishes near $S$, $\eta_j\to1$ on $\Gamma\setminus S$,
and $\int_\Gamma|\nabla_\Gamma\eta_j|\to0$ as $j\to\infty$.
\end{lemma}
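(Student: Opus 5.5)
We will build the cutoffs from a covering of $S$ by small balls, using that $\mathcal H^{n-2}(S)=0$ together with the uniform area density bound of Lemma~\ref{lem:singular-area}.

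Fix $j$. Since $S$ is compact and $\mathcal H^{n-2}(S)=0$, we can cover $S$ by finitely many ambient geodesic balls $B_{\rho_i}(x_i)$, $i=1,\dots,N_j$, with $x_i\in S\subset\Gamma$, radii $\rho_i<\min\{r_0,1/j\}/2$, and $\sum_i\rho_i^{n-2}<1/j$. For each $i$ choose a smooth ambient function $\chi_i\colon M\to[0,1]$ with $\chi_i=0$ on $B_{\rho_i}(x_i)$, $\chi_i=1$ outside $B_{2\rho_i}(x_i)$, and $|\nabla\chi_i|\leq C/\rho_i$; for instance, a smoothed version of $\min\{1,\max\{0,\dist(\cdot,x_i)/\rho_i-1\}\}$, which is possible because $\dist(\cdot,x_i)$ is smooth away from $x_i$ in a Cartan--Hadamard manifold. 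Then set $\eta_j\coloneqq\prod_i\chi_i$ restricted to $\Gamma$. By construction $\eta_j$ is smooth on $M$, satisfies $0\leq\eta_j\leq1$, and vanishes on the open neighborhood $\bigcup_iB_{\rho_i}(x_i)$ of $S$.

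For the gradient bound, note that $|\nabla_\Gamma\eta_j|\leq|\nabla\eta_j|\leq\sum_i|\nabla\chi_i|$ on the regular part. The $i$th term is supported in $B_{2\rho_i}(x_i)$, so by Lemma~\ref{lem:singular-area}
$$
\int_\Gamma|\nabla_\Gamma\eta_j|\leq\sum_i\frac{C}{\rho_i}\,|\Gamma\cap B_{2\rho_i}(x_i)|\leq C'\sum_i\rho_i^{n-2}<\frac{C'}{j}\to0.
$$
Because $S$ has perimeter measure zero, integrating over $\Gamma$ is the same as integrating over the regular part, where $\nabla_\Gamma$ is defined.

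For pointwise convergence, fix $p\in\Gamma\setminus S$ and let $d=\dist(p,S)>0$. Every ball $B_{2\rho_i}(x_i)$ has center in $S$ and radius less than $1/j$, so once $1/j<d$ the point $p$ lies outside all of them, and hence $\eta_j(p)=1$ for all large $j$.

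The one point that needs care is using the density bound at centers $x_i\in S$ rather than only at regular points. Lemma~\ref{lem:singular-area} is stated for all $x\in\Gamma$, so this is covered. Even if a covering produced centers off $S$, one could recenter at a nearby point of $S$ and double the radii, which only changes the constants.
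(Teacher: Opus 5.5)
Your proposal is correct and follows essentially the same route as the paper's proof. Both cover the compact set $S$ by finitely many small balls centered in $S$ with $\sum_i\rho_i^{n-2}$ small, take the product of radial cutoffs with $|\nabla\chi_i|\leq C/\rho_i$, and bound $\int_\Gamma|\nabla_\Gamma\eta_j|\leq C\sum_i\rho_i^{-1}|\Gamma\cap B_{2\rho_i}(x_i)|$ using the density estimate of Lemma~\ref{lem:singular-area}. Your extra details are sound: choosing $2\rho_i<r_0$ so that the density bound applies, recentering the covering sets at points of $S$, and deriving pointwise convergence from $\dist(p,S)>0$.
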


\begin{proof}
If $S\neq\varnothing$, cover it by finitely many balls
$B_{\rho_i}(x_i)$, with $x_i\in S$, $\max_i\rho_i\to0$, and
$\sum_i\rho_i^{n-2}\to0$. This is possible because
$\mathcal H^{n-2}(S)=0$ and $S$ is compact. For each ball choose a
smooth function which is zero on the ball, one outside the ball of
twice the radius, and has gradient bounded by $C/\rho_i$.
The product of these functions gives $\eta_j$, and
Lemma~\ref{lem:singular-area} implies
$$
\int_\Gamma|\nabla_\Gamma\eta_j|
\leq C\sum_i\rho_i^{-1}|\Gamma\cap B_{2\rho_i}(x_i)|
\leq C\sum_i\rho_i^{n-2}\longrightarrow0.
$$
For a fixed regular point these functions are eventually one in a
neighborhood of the point. If $S=\varnothing$, take $\eta_j=1$.
\end{proof}

The same cutoffs justify integration of divergences across $S$.
For any bounded, locally Lipschitz tangent field $Z$ on
$\Gamma\setminus S$ with $\diver_\Gamma Z\in L^1(\Gamma)$, we have
\begin{equation}\label{eq:cutoff-divergence}
\int_\Gamma\eta_j\diver_\Gamma Z
=-\int_\Gamma\langle\nabla_\Gamma\eta_j,Z\rangle
\longrightarrow0,
\end{equation}
and dominated convergence therefore gives
$\int_\Gamma\diver_\Gamma Z=0$.

\begin{lemma}\label{lem:singular-angles}
We have
$$
\sup_{q\in\Gamma\setminus S}\int_\Gamma\frac{u^2}{r^{n-1}}\,dp<\infty
\qquad\text{and}\qquad
\int_{\Gamma\times\Gamma}\frac{u^2+v^2}{r^{n-1}}<\infty.
$$
\end{lemma}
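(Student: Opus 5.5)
The plan is to get the bound on $u^2/r^{n-1}$ from the Minkowski--Green computation \eqref{eq:curved-D} with the simplest weight, using positivity. The one new input, compared with Proposition~\ref{lem:green}, is the passage across $S$ via the cutoffs of Lemma~\ref{lem:cutoffs}.

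\emph{Choice of weight and pointwise bound.} Take $\psi$ with $\partial_r\psi=r^{2-n}$, that is, $a\equiv1$ and $b\equiv0$ in \eqref{eq:angular-potential}, so that $C_\psi=1$ and $X_q=r^{2-n}\nabla_\Gamma r_q$. Then \eqref{eq:curved-D} reads
$$
r^{n-1}\tilde\Delta_\Gamma\psi_q=(n-1)u^2-rHu+r\tr_\Gamma\cE_p .
$$
Since $\cE_p\geq0$ by Hessian comparison, and $H$ is bounded on the regular part, say $|H|\leq C_0$, we get, almost everywhere on $\Gamma\setminus(S\cup\{q\})$,
$$
(n-1)\frac{u^2}{r^{n-1}}\leq\tilde\Delta_\Gamma\psi_q+C_0r^{2-n}.
$$
In particular $\tilde\Delta_\Gamma\psi_q+C_0r^{2-n}\geq0$. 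So it suffices to bound $\int_\Gamma\tilde\Delta_\Gamma\psi_q$ from above, uniformly in $q\in\Gamma\setminus S$. The term $C_0\int_\Gamma r^{2-n}$ is already bounded uniformly in $q$ by Lemma~\ref{lem:singular-area}.

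\emph{Integration with cutoffs.} Fix $q\in\Gamma\setminus S$. For $j$ large, $\eta_j\equiv1$ on a neighborhood of $q$ on which $\Gamma$ is a $\C^{1,1}$ graph.
\begin{itemize}
\item Excise a small ball $B_\eps(q)$ and apply the divergence theorem to the compactly supported Lipschitz field $\eta_jX_q$ on $\Gamma\setminus(S\cup B_\eps(q))$.
\item The flux through $\partial B_\eps(q)$ tends to $-|\Sph^{n-2}|$ as $\eps\to0$, exactly as in the proof of Proposition~\ref{lem:green}. This uses only Lemma~\ref{lem:diagonal-estimates} near the regular point $q$.
\item This yields
$$
\int_\Gamma\eta_j\tilde\Delta_\Gamma\psi_q=-|\Sph^{n-2}|-\int_\Gamma\langle\nabla_\Gamma\eta_j,X_q\rangle .
$$
\item On the support of $\nabla_\Gamma\eta_j$, which lies near $S$, we have $|X_q|\leq\dist(q,S)^{2-n}/2^{n-2}$ for large $j$. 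Hence the last term tends to $0$ as $j\to\infty$, by Lemma~\ref{lem:cutoffs}.
\end{itemize}
Add $\int_\Gamma\eta_jC_0r^{2-n}$ to both sides and apply Fatou's lemma to the nonnegative functions $\eta_j(\tilde\Delta_\Gamma\psi_q+C_0r^{2-n})$, which converge almost everywhere since $\eta_j\to1$ off $S$ and $S$ is null. This gives
$$
(n-1)\int_\Gamma\frac{u^2}{r^{n-1}}\,dp\leq-|\Sph^{n-2}|+2C_0\sup_{q'\in\Gamma}\int_\Gamma r(p,q')^{2-n}\,dp<\infty .
$$
The right side is independent of $q$, which proves the first claim. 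Uniformity is not affected by the dependence of the rate of convergence on $\dist(q,S)$, because only the limit is used.

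\emph{The double integral and the expected obstacle.} Integrating the first bound over $q$, which is permitted since $S$ is null, gives $\int_{\Gamma\times\Gamma}u^2/r^{n-1}<\infty$. Exchanging $p$ and $q$ swaps $u$ and $v$ and leaves $r$ fixed, so the $v^2$ term is bounded as well. The main point needing care is the cutoff step: $\tilde\Delta_\Gamma\psi_q$ is not known to be integrable near $S$ a priori. This is why I rely on the one-sided bound $\tilde\Delta_\Gamma\psi_q\geq-C_0r^{2-n}$ together with Fatou's lemma, rather than on dominated convergence. I also need to check that the boundary portion $\Gamma\cap\partial B$ causes no trouble. It is $\C^{1,1}$ with bounded $H$ and disjoint from $S$, so the computation \eqref{eq:curved-D} holds there almost everywhere as in Section~\ref{sec:green}.
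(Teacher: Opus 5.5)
Your proof is correct, and its core matches the paper's. Both integrate the divergence of a radial Green-type field over $\Gamma$, drop the term $r\tr_\Gamma\cE_p\geq0$ by Hessian comparison, and bound the $H$ term by $\|H\|_\infty\int_\Gamma r^{2-n}$, which is uniform in $q$ by Lemma~\ref{lem:singular-area}. Both then cross $S$ with the cutoffs of Lemma~\ref{lem:cutoffs}.

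The difference is how the pole at $q$ is handled. The paper never faces it. It uses $Y_{q,\eps}=\nabla_\Gamma(r_q^2/2)/(r_q^2+\eps^2)^{(n-1)/2}$, the tangential projection of a smooth ambient field, so $Y_{q,\eps}$ and $\diver_\Gamma Y_{q,\eps}$ are bounded for fixed $\eps$. The cutoffs then give $\int_\Gamma\diver_\Gamma Y_{q,\eps}=0$, with error at most $\|Y_{q,\eps}\|_\infty\int_\Gamma|\nabla_\Gamma\eta_j|$. The extra nonnegative term $(n-1)\eps^2/(r^2+\eps^2)^{(n+1)/2}$ is discarded, and monotone convergence in $\eps$ finishes.

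You instead keep the singular field $X_q=r^{2-n}\nabla_\Gamma r_q$, excise $B_\eps(q)$, reuse the flux computation of Proposition~\ref{lem:green}, and control the cutoff error through $\dist(q,S)>0$. This costs a local $\C^{1,1}$ analysis at $q$, both for the flux and for integrability of $\tilde\Delta_\Gamma\psi_q$ near $q$. The regularization avoids that entirely, and its cutoff step does not even use that $q$ is regular. In exchange, your computation essentially produces the exact mass $-|\Sph^{n-2}|$, that is, the first identity of Proposition~\ref{prop:singular-identities} for this particular weight, and a slightly better constant.

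Two small corrections.
\begin{itemize}
\item On the support of $\nabla_\Gamma\eta_j$ one has $r\geq\dist(q,S)/2$ for large $j$. So the bound is $|X_q|\leq2^{n-2}\dist(q,S)^{2-n}$, not $\dist(q,S)^{2-n}/2^{n-2}$. Your factor $2C_0$ can also be $C_0$.
\item Fatou is harmless but not needed. For fixed regular $q$, $\tilde\Delta_\Gamma\psi_q$ is bounded on $\{r\geq\delta\}$, since $|u|\leq1$, $H$ is bounded, and $\cE_p$ is bounded for $r$ bounded below. Hence it is integrable on all of $\Gamma$, near $S$ included, and dominated convergence applies. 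Your remark that it is not known to be integrable near $S$ is therefore overly cautious.
\end{itemize}
The real content of the lemma is the uniformity in $q$, and your final bound does deliver it.
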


\begin{proof}
Fix $q\in\Gamma\setminus S$ and $\eps>0$, and consider the regularized
radial field
$$
Y_{q,\eps}\coloneqq
\frac{\nabla_\Gamma(r_q^2/2)}{(r_q^2+\eps^2)^{(n-1)/2}}.
$$
Since $r_q^2/2$ is smooth on $M$, this field is the tangential
projection of a smooth ambient field. Its divergence depends only
on ambient derivatives and the bounded mean curvature $H$, and is
therefore bounded for fixed $\eps$. The weak divergence formula
holds also on the $\C^{1,1}$ collar, as in the proof of
Proposition~\ref{lem:green}. Integrating against the cutoffs of
Lemma~\ref{lem:cutoffs} gives
$\int_\Gamma\diver_\Gamma Y_{q,\eps}=0$, since the cutoff error is at
most $\|Y_{q,\eps}\|_\infty\int_\Gamma|\nabla_\Gamma\eta_j|\to0$.
The identities
$\Delta_\Gamma(r_q^2/2)=n-1+r\tr_\Gamma\cE_p-H(p)ru$ and
$|\nabla_\Gamma(r_q^2/2)|^2=r^2(1-u^2)$ give
$$
\diver_\Gamma Y_{q,\eps}=
(n-1)\frac{\eps^2+r^2u^2}{(r^2+\eps^2)^{(n+1)/2}}
+\frac{r\tr_\Gamma\cE_p}{(r^2+\eps^2)^{(n-1)/2}}
-\frac{H(p)ru}{(r^2+\eps^2)^{(n-1)/2}}.
$$
Hessian comparison makes the first two terms nonnegative. Integrating
and discarding the terms not needed below, we obtain
$$
(n-1)\int_\Gamma
\frac{r^2u^2}{(r^2+\eps^2)^{(n+1)/2}}
\leq\|H\|_{L^\infty(\Gamma)}\int_\Gamma r^{2-n}\leq C,
$$
where Lemma~\ref{lem:singular-cutoffs} makes $C$ independent of $q$
and $\eps$. Monotone convergence as $\eps\downarrow0$ proves the
first assertion. Tonelli's theorem permits integration in $q$;
reversing the endpoints interchanges $u$ and $v$ and proves the
second.
\end{proof}

\begin{proof}[Proof of Proposition~\ref{prop:singular-identities}]
The argument consists of three parts. 

\emph{Part 1: absolute integrability.}
For small $r$ the functions $a,b,a',b'$ are bounded,
$|\partial_v c(r,v)|\leq C|v|$, and $|w|\leq1$; moreover
$\|\cE_p\|\leq Cr$, by the compactness of $\Gamma$ alone.
Multiplying \eqref{eq:curved-Ap-general} by $r^{n-1}$ and using
\eqref{eq:angular-potential} gives
$$
r^{n-1}\tilde{\Delta}_\Gamma\psi_q=
r\,\partial_r c(r,v)(1-u^2)+\big((n-1)u^2-rH(p)u\big)c(r,v)
+u\,\partial_v c(r,v)w+rc(r,v)\tr_\Gamma\cE_p ,
$$
and hence
\begin{equation}\label{eq:laplace-bound}
|\tilde{\Delta}_\Gamma\psi_q(p)|
\leq C\left(r^{2-n}+\frac{u^2+v^2}{r^{n-1}}\right).
\end{equation}
For $r$ bounded away from zero, \eqref{eq:curved-D} shows that
$\tilde{\Delta}_\Gamma\psi_q$ is uniformly bounded. Thus
Lemmas~\ref{lem:singular-cutoffs} and~\ref{lem:singular-angles}
give the absolute integrability of $\tilde{\Delta}_\Gamma\psi_q$ on
$\Gamma\times\Gamma$; the same holds with the endpoints reversed.
For the degree terms, $J\geq r^{n-1}$ and $\phi(0)=0$ give
$$
\frac{|v\phi(u)|}{J}
\leq \operatorname{Lip}(\phi)\frac{|uv|}{r^{n-1}}
\leq \frac{\operatorname{Lip}(\phi)}2\frac{u^2+v^2}{r^{n-1}},
$$
and the reversed term has the same bound; so both are absolutely
integrable, by Lemma~\ref{lem:singular-angles}.

\emph{Part 2: the Minkowski--Green identity.}
Fix $q\in\Gamma\setminus S$. We integrate the divergence of a
regularization of $X_q$ across $S$ by \eqref{eq:cutoff-divergence},
and then remove the regularization. For $\eps>0$ set
$$
X_{q,\eps}\coloneqq
\left(\frac{r^2}{r^2+\eps^2}\right)^{(n-1)/2}X_q
=\frac{c(r,v)\nabla_\Gamma(r_q^2/2)}
{(r^2+\eps^2)^{(n-1)/2}},
$$
extended by zero at $q$. A local $\C^{1,1}$ graph at $q$
gives $u,v=O_q(r)$ and bounded $\nabla_\Gamma v$, so $X_{q,\eps}$
is locally Lipschitz on $\Gamma\setminus S$. Its weak divergence
has no atom at $q$ and, by the product rule, equals
$$
\diver_\Gamma X_{q,\eps}=
\left(\frac{r^2}{r^2+\eps^2}\right)^{(n-1)/2}
\tilde{\Delta}_\Gamma\psi_q
+(n-1)\eps^2
\frac{c(r,v)(1-u^2)}{(r^2+\eps^2)^{(n+1)/2}},
$$
which is bounded on $\Gamma\setminus S$ for fixed $q,\eps$. Thus
\eqref{eq:cutoff-divergence} gives
\begin{equation}\label{eq:regularized-divergence}
0=\int_\Gamma
\left(\frac{r^2}{r^2+\eps^2}\right)^{(n-1)/2}
\tilde{\Delta}_\Gamma\psi_q+(n-1)\eps^2\int_\Gamma
\frac{c(r,v)(1-u^2)}{(r^2+\eps^2)^{(n+1)/2}}.
\end{equation}
We let $\eps\downarrow0$ in the two terms separately. In the first,
the integrand is dominated by $|\tilde{\Delta}_\Gamma\psi_q|$, which
is integrable on $\Gamma$: near $q$ by \eqref{eq:laplace-bound}, as
$u,v=O_q(r)$ there and $r^{2-n}$ is integrable by
Lemma~\ref{lem:singular-cutoffs}, and away from $q$ because it is
bounded. So the first term tends to
$\int_\Gamma\tilde{\Delta}_\Gamma\psi_q$ by dominated convergence.
For the second term, the area estimate in
Lemma~\ref{lem:singular-cutoffs} gives
$$
\sup_{\substack{q\in\Gamma\\0<\eps<r_0}}
\eps^2\int_\Gamma
\frac{1}{(r^2+\eps^2)^{(n+1)/2}}<\infty:
$$
the ball $r<\eps$ contributes at most a constant, each
annulus $2^j\eps\leq r<2^{j+1}\eps<r_0$ contributes at most
$C2^{-2j}$, and the remaining region is bounded as well. Now use
normal coordinates centered at $q$ and dilate by $\eps^{-1}$. On
bounded sets, the rescaled local $\C^{1,1}$ graph converges to
$T_q\Gamma$, its area element converges to Euclidean area, and
$c(r,v)(1-u^2)\to C_\psi$; the factor $\eps^{n-1}$ from the area
element cancels the factor $\eps^{1-n}$ from the kernel, the
annular estimates above control the rescaled tails, and the
contribution from $r\geq r_0$ is $O(\eps^2)$. Hence, using polar
coordinates on $T_q\Gamma$,
$$
\lim_{\eps\downarrow0}(n-1)\eps^2\int_\Gamma
\frac{c(r,v)(1-u^2)}{(r^2+\eps^2)^{(n+1)/2}}=(n-1)C_\psi|\Sph^{n-2}|
\int_0^\infty\frac{s^{n-2}}{(1+s^2)^{(n+1)/2}}\,ds
=C_\psi|\Sph^{n-2}|,
$$
where the last equality follows by integrating the derivative of
$s^{n-1}(1+s^2)^{-(n-1)/2}$, whose values at $0$ and $\infty$
are $0$ and $1$. Letting $\eps\downarrow0$ in
\eqref{eq:regularized-divergence} thus gives
$\int_\Gamma\tilde{\Delta}_\Gamma\psi_q=-C_\psi|\Sph^{n-2}|$ for
every $q\in\Gamma\setminus S$, which is the first identity; by
Step~1 it may be integrated in $q$.

\emph{Part 3: the degree identity.}
Fix a regular pole $p$. Radial projection from $p$ is Lipschitz
on $S$, because $\dist(p,S)>0$, and its image has spherical
$(n-1)$-measure zero. Its critical values on the regular part likewise
have measure zero, by the area formula on a countable collection
of regular coordinate patches. Discard these directions and the
tangent directions at $p$. Every remaining ray meets $\Gamma$
transversely in finitely many points: an infinite sequence would
accumulate in the compact boundary, either at a regular point,
contrary to transversality, at a singular point, or at $p$ along a
tangent direction, and the latter two possibilities were excluded.
Thus the signed crossing count of Section~\ref{sec:degree} applies,
with no intersection on $S$, and the Jacobian of the radial
projection is $v/J$; hence
$$
\int_\Gamma\phi(u)\frac vJ=|\Sph^{n-2}|I_\phi .
$$
Integrating in $p$ and reversing the endpoints, which the absolute
integrability of Step~1 permits, proves the second identity.
\end{proof}

\section{The Calibration Theorem}\label{sec:CMC}\label{sec:reduction}

By Propositions~\ref{lem:green} and~\ref{prop:boundary-integral},
the Minkowski--Green and degree identities hold in every
Cartan--Hadamard manifold, with masses which depend only on the
weights $\psi$, $\phi$ and on $|\Gamma|$. We now combine them,
seeking $\psi$, $\phi$ and a constant $\kappa$ for which the
combined integrand is nonnegative, and vanishes on the
chords of the unit sphere $\Sph^{n-1}\subset\R^n$, the extremal
case of Theorem~\ref{thm:CMC}. The reduction theorem below states
that such data yield Theorems~\ref{thm:main} and~\ref{thm:CMC}.

By scaling, it suffices to prove Theorem~\ref{thm:CMC} for
$H=n-1$, and to show that then $|\Gamma|\geq|\Sph^{n-1}|$; so
we assume $H=n-1$ in this section. Let $\psi$ be as in
\eqref{eq:angular-potential}, $\phi$ be a Lipschitz
function on $[-1,1]$, and $\kappa>0$ be a constant, and set
\begin{equation}\label{eq:curved-pointwise}
\tilde F\coloneqq r^{n-1}\Big(\kappa+\tilde{\Delta}_\Gamma\psi_q
+\tilde{\Delta}_\Gamma\psi_p\Big)
+\frac{r^{n-1}}{J}\big(v\phi(u)+u\phi(v)\big).
\end{equation}
By Propositions~\ref{lem:green} and~\ref{prop:boundary-integral},
\begin{equation}\label{eq:flat-strategy}
\int_{\Gamma\times\Gamma}\frac{\tilde F}{r^{n-1}}
=\kappa|\Gamma|\big(|\Gamma|-\ell\big),
\qquad
\ell\coloneqq\frac{2|\Sph^{n-2}|(C_\psi-I_\phi)}{\kappa},
\end{equation}
where the integrand is absolutely integrable by
\eqref{eq:Ap-bound} and the remarks following
Lemma~\ref{lem:diagonal-estimates}. If $\tilde F\geq0$, then
$|\Gamma|\geq\ell$. Applying this to the unit sphere
$\Gamma=\Sph^{n-1}\subset\R^n$, which has $H=n-1$, gives
$\ell\leq|\Sph^{n-1}|$, with equality exactly when $\tilde F$
vanishes on the chords of $\Sph^{n-1}$. So we seek $\psi$,
$\phi$, and $\kappa$ for which $\tilde F\geq0$ in every
Cartan--Hadamard $n$-manifold, and $\tilde F=0$ on the chords
of $\Sph^{n-1}$; then
\begin{equation}\label{eq:mass-general}
2|\Sph^{n-2}|\big(C_\psi-I_\phi\big)=\kappa|\Sph^{n-1}|,
\end{equation}
and the resulting bound $|\Gamma|\geq|\Sph^{n-1}|$ is sharp.

We call $(\psi,\phi,\kappa)$ a \emph{calibration} in dimension $n$
if $\psi$ satisfies \eqref{eq:angular-potential}, $\phi$ is Lipschitz
on $[-1,1]$ with $\phi(0)=0$, $\kappa>0$, the mass condition
\eqref{eq:mass-general} holds, and the associated integrand
$\tilde F$ defined by \eqref{eq:curved-pointwise} satisfies,
in every Cartan--Hadamard $n$-manifold and for all pairs of distinct
points equipped with unit vectors,
\begin{equation}\label{eq:curved-condition}
\tilde F\geq0,\qquad\text{with}\qquad
\tilde F=0\ \Longrightarrow\ u=v=\frac r2,\ J=r^{n-1}.
\end{equation}
For fixed $M$ and $(\psi,\phi,\kappa)$, the integrand $\tilde F$ is
a continuous function on the set of pairs
$(\nu_p,\nu_q)\in TM\times TM$ with $p\ne q$ and
$|\nu_p|=|\nu_q|=1$. Indeed, substituting \eqref{eq:curved-D}
and its endpoint-reversed version into \eqref{eq:curved-pointwise},
with $H=n-1$, expresses $\tilde F$ entirely in terms of these
vectors and the ambient geometry. 

\begin{theorem}\label{thm:general-reduction}
If there is a calibration $(\psi,\phi,\kappa)$ in dimension $n\geq3$, then
the conclusions of Theorems~\ref{thm:main} and~\ref{thm:CMC} hold
in dimension $n$.
\end{theorem}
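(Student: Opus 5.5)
The argument splits into two parts: first the CMC inequality with its rigidity, then the isoperimetric inequality by the profile method.

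\emph{Part 1: the CMC inequality.} By scaling we may take $H=n-1$. Given a calibration, Propositions~\ref{lem:green} and~\ref{prop:boundary-integral} yield \eqref{eq:flat-strategy}. The mass condition \eqref{eq:mass-general} makes $\ell=|\Sph^{n-1}|$. Since $\tilde F\geq0$ pointwise by \eqref{eq:curved-condition}, we get $\kappa|\Gamma|(|\Gamma|-|\Sph^{n-1}|)\geq0$, which is \eqref{eq:CMC}.

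For rigidity, suppose equality holds. Then $\tilde F=0$ for almost every pair, and by continuity of $\tilde F$ off the diagonal, for every pair $p\neq q$. The equality clause in \eqref{eq:curved-condition} then gives $u=v=r/2$ and $J=r^{n-1}$ for every chord.
\begin{itemize}
\item The condition $J=r^{n-1}$ together with the Rauch inequality \eqref{eq:B-lower-bound} forces $A(t)=tI$ along each chord. Hence $K\equiv0$ along every chord of $\Gamma$.
\item The chords of $\Gamma$ sweep out the closed region $\overline\Omega$, since every point of $\Omega$ lies on a geodesic segment through it whose two ends exit through $\Gamma$. So $\Omega$ has vanishing sectional curvature on all planes containing chord directions. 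A fuller argument uses $\cE_p=0$: this follows from $\tilde F=0$, or from $A=tI$ via \eqref{eq:E-from-A}. It means the geodesic spheres are Euclidean, so $\exp_p$ is a local isometry on the relevant region and $\Omega$ is flat.
\item In flat $\Omega$, the identity $u=r/2$ for all $q$ says that $\langle q-p,-\nu_p\rangle=|q-p|^2/2$. Hence $\Gamma$ lies on the unit sphere tangent at $p$, and $\Omega$ is isometric to the unit ball.
\end{itemize}

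\emph{Part 2: the isoperimetric inequality.} We follow Kleiner and Ghomi--Spruck. Exhaust $M$ by geodesic balls $B$ and consider the isoperimetric profile $I_B(V)$ of regions in $B$. It suffices to show $I_B(V)\geq I_0(V)\coloneqq|\Sph^{n-1}|^{1/n}\,n^{(n-1)/n}|\B^n|^{1/n}V^{(n-1)/n}$, the Euclidean profile.

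The key claim is that at any volume where $I_B$ is near $I_0$, an isoperimetric region $\Omega^*$ has free-boundary mean curvature $H$ comparable to $I_0'(V)$. We then need $|\Gamma|\geq|\Sph^{n-1}|((n-1)/H)^{n-1}$ for $\Gamma=\partial\Omega^*$, where $H$ is constant only on the free part and $\Gamma$ may be singular. This is where Proposition~\ref{prop:singular-identities} enters:
\begin{itemize}
\item The identities hold for $\Gamma$.
\item On the part of $\Gamma$ lying on $\partial B$, the convexity of geodesic balls gives $H\geq H_0$, where $H_0$ is the free mean curvature. This holds as an inequality in the right direction; following Ghomi--Spruck, we need $0<H\leq H_0$ on the contact part, with appropriate sign conventions.
\item The term $-rHu$ in \eqref{eq:curved-D} can then be handled because, I expect, the calibration's $\tilde F$ is monotone in $H$ where $u\geq0$. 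On negative $u$ this is delicate. Alternatively, replace $H$ by the constant $H_0$ and control the error by the sign of $(H-H_0)u$, integrated using the structure of the boundary contact.
\end{itemize}
This produces $|\Gamma|\geq|\Sph^{n-1}|((n-1)/H_0)^{n-1}$. Combined with the first variation $I_B'=H_0$ in the viscosity sense, this gives a differential inequality $I_B^{n/(n-1)}$ is at least linear in $V$, and integrating from $V=0$ (where $I_B\sim I_0$) yields $I_B\geq I_0$. Letting $B\uparrow M$ gives \eqref{eq:main}.

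\emph{Equality in Theorem~\ref{thm:main}.} Equality propagates to equality in the CMC inequality for $\Omega$ itself, which is then a smooth isoperimetric region. Part 1 then shows that $\Omega$ is a Euclidean ball.

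The main obstacle is the second bullet of Part 2: adapting the pointwise inequality $\tilde F\geq0$ to the boundary-contact portion, where $H$ is not equal to the constant. I would first try proving that $\tilde F$ is nondecreasing as $H$ decreases whenever $u\geq0$. For $u<0$, I would try symmetrizing in $p,q$ and using the convexity of $\partial B$ to show that chords from contact points enter $\Omega$ (so $u\geq0$ there).
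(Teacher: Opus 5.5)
Your Part~1 is correct, and its rigidity step is leaner than the paper's route (convexity, then flatness via star-shapedness, then Alexandrov's theorem). Note that $u=r/2$ needs no flatness. It says $|\exp_p^{-1}(q)+\nu_p|=1$, so $\exp_p^{-1}(\Gamma)$ is the whole sphere $\{|\xi+\nu_p|=1\}$. Every radial segment of the enclosed ball lies on a chord from $p$, along which $A(t)=tI$, so $\exp_p$ maps that ball isometrically onto $\Omega$, and neither convexity nor Alexandrov is needed.

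The gaps are in Part~2. The contact-set step that you call the main obstacle is left open, and your bullet starts from a wrong inequality. Convexity of $B$ does not give $H\geq H_0$. What holds, by Ghomi--Spruck from the one-sided obstacle variations, is $H\leq H_0$ a.e.\ on $\Gamma\cap\partial B$. With that, the step is short:
\begin{itemize}
\item Normalize $H_0=n-1$. Since $H$ enters \eqref{eq:curved-Ap-general} only through $-\partial_r\psi\,Hu$, the integrand whose $\Gamma\times\Gamma$ integral the two identities evaluate as $\kappa|\Gamma|(|\Gamma|-|\Sph^{n-1}|)$ equals $\tilde F/r^{n-1}+\partial_r\psi(r,v)(n-1-H(p))u+\partial_r\psi(r,u)(n-1-H(q))v$, as in \eqref{eq:H-correction}.
\item The first correction vanishes unless $p$ is a contact point. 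There $\nu_p$ is the outward normal of $B$ and the chord stays in the convex ball, so $u(p,q)\geq0$; the second correction is symmetric.
\item Since $\partial_r\psi=r^{2-n}c>0$, both corrections are nonnegative, and no negative-$u$ case ever arises.
\end{itemize}
You also never show $H_0>0$, which is needed to rescale and for the profile inequality. The paper integrates $\Delta_\Gamma f$, with $f=\dist(o,\cdot)^2/2$, over $\Gamma$ (across $S$ with cutoffs). It uses $\nabla^2f\geq g$, $\langle\nabla f,\nu\rangle=R$ on the contact set, and $\Delta f\geq n$ to get $H_0\geq(n-1)|\Gamma|/\int_{\Omega^*}\Delta f>0$.

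The equality case has a genuine gap for $n\geq8$. You assert that an extremal $\Omega$ ``is then a smooth isoperimetric region.'' In those dimensions regularity theory only gives a singular set $S$ of dimension at most $n-8$, while Part~1 needs a smooth closed CMC hypersurface. You also do not say how equality in \eqref{eq:main} becomes equality in \eqref{eq:CMC}. The paper argues as follows:
\begin{itemize}
\item $\Omega$ minimizes the isoperimetric deficit, whose first variation in regular free patches gives $H=(n-1)|\Gamma|/(n|\Omega|)$. Combined with equality in \eqref{eq:main}, this yields $H^{n-1}|\Gamma|=(n-1)^{n-1}|\Sph^{n-1}|$.
\item After rescaling, Proposition~\ref{prop:singular-identities} forces $\int_{\Gamma\times\Gamma}\tilde F/r^{n-1}=0$, hence $u=v=r/2$, but only on regular pairs.
\item Your own sphere device then removes $S$. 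For a fixed regular $q$, density of the regular part gives $\exp_q^{-1}(\Gamma)\subset\{|\xi+\nu_q|=1\}$. So $\exp_q^{-1}(\Omega)$ is the enclosed ball up to a null set, and $\Gamma$ is the image of the whole sphere under the diffeomorphism $\exp_q$.
\end{itemize}
Thus $S=\varnothing$, and only then does Part~1 apply.
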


Throughout this section we fix a calibration $(\psi,\phi,\kappa)$
in dimension $n\geq3$ and let $\tilde F$ be its associated integrand.
The CMC part of the theorem is proved in
Section~\ref{sec:CMC-int}, by integrating
\eqref{eq:curved-pointwise} over $\Gamma\times\Gamma$ via
\eqref{eq:flat-strategy} and analyzing the equality case, in which
$\tilde F$ vanishes on every chord, which forces the enclosed
domain $\Omega$ to be flat, and $\Gamma$ to be a sphere. The
isoperimetric part is proved in Section~\ref{sec:obstacle} for
isoperimetric regions with $\C^{1,1}$ boundary, which is the case
when $n\leq7$, and in Section~\ref{sec:singular} in general, where
the boundary may have singularities.

\subsection{The CMC inequality}\label{sec:CMC-int}
By \eqref{eq:mass-general}, $\ell=|\Sph^{n-1}|$ in
\eqref{eq:flat-strategy}, which thus reads
\begin{equation}\label{eq:CMC-integrated}
|\Gamma|-|\Sph^{n-1}|
=\frac{1}{\kappa|\Gamma|}\int_{\Gamma\times\Gamma}\frac{\tilde F}{r^{n-1}}
\geq0.
\end{equation}
Thus
$|\Gamma|\geq|\Sph^{n-1}|$, and rescaling yields \eqref{eq:CMC}
for an arbitrary positive constant $H$. Equation
\eqref{eq:CMC-integrated} represents the area deficit as
an integral of a nonnegative function of the chord data; this
representation will yield the rigidity statement below.

Now suppose that $H=n-1$ and $|\Gamma|=|\Sph^{n-1}|$; we show
that $\Omega$ is isometric to $\B^n$. Equality in
\eqref{eq:CMC-integrated} gives $\tilde F=0$ almost
everywhere on $\Gamma\times\Gamma$; since $\tilde F$ is
continuous and nonnegative off the diagonal, it vanishes
for all $p\neq q$, so by \eqref{eq:curved-condition}
\begin{equation}\label{eq:equality-conditions}
u=v=\frac{r}2,
\qquad J=r^{n-1}.
\end{equation}
The argument has three steps,
which follow closely the corresponding ones in
\cite[\S4.4]{chen-ghomi-wang}.

\subsubsection{Convexity}
Let $\ell$ be a complete geodesic with a chosen orientation, and
let $p,q$ be distinct points of $\ell\cap\Gamma$, with $q$ later
than $p$ along $\ell$. For the segment $\gamma$ of $\ell$ from $p$
to $q$, \eqref{eq:u-v} and \eqref{eq:equality-conditions} give
$$
\langle\gamma',\nu_p\rangle=-u(p,q)=-\frac{r(p,q)}2<0,
\qquad
\langle\gamma',\nu_q\rangle=v(p,q)=\frac{r(p,q)}2>0.
$$
So at every point of $\ell\cap\Gamma$ the geodesic crosses
$\Gamma$ transversally, and of any two such points it enters
$\Omega$ at the earlier and leaves at the later. Three points
$p_1<p_2<p_3$ of $\ell\cap\Gamma$ are therefore impossible: $p_2$
would be an exit according to the pair $(p_1,p_2)$ and an entrance
according to $(p_2,p_3)$. Hence $\ell$ meets $\Gamma$ at most
twice. If $\ell$ meets $\Omega$, then $\ell^{-1}(\Omega)$ is a
nonempty bounded open subset of $\R$, as $\Omega$ is open and
bounded, and the endpoints of each of its component intervals lie
in $\ell\cap\Gamma$; two components would need three such points.
So $\ell\cap\Omega$ is a single interval, and $\Omega$ is
geodesically convex.

\subsubsection{Flatness}
Take distinct $p,q\in\Gamma$, the chord $\gamma$ from $p$ to $q$,
and its Jacobi matrix $A(t)$. Each eigenvalue of $A(r)^TA(r)$ is
at least $r^2$ by \eqref{eq:B-lower-bound}, while their product is
$\det\big(A(r)^TA(r)\big)=J^2=r^{2(n-1)}$ by
\eqref{eq:equality-conditions}; so every eigenvalue equals $r^2$,
and $A(r)^TA(r)=r^2I$. For $\xi\neq0$ put $Z(t)\coloneqq A(t)\xi$,
which does not vanish for $t>0$, and there
\begin{equation*}
\frac{d^2}{dt^2}|Z|
=\frac{|Z'|^2|Z|^2-\langle Z,Z'\rangle^2}{|Z|^3}
-\frac{\langle Z,KZ\rangle}{|Z|}\geq0.
\end{equation*}
Thus $|Z|$ is a convex function on $[0,r]$ with $|Z(0)|=0$ and
$|Z(r)|=r|\xi|$, whence $|Z(t)|\leq t|\xi|$; combined with the
opposite inequality from \eqref{eq:B-lower-bound}, this gives
$|A(t)\xi|=t|\xi|$. So $|Z|$ is linear, its second derivative
vanishes, and both nonnegative terms above
vanish; in particular $KZ=0$. As $\xi$ is arbitrary and $A(t)$ is
invertible for $t>0$, $K(t)=0$: every plane tangent to $\gamma$
has zero sectional curvature. Now fix $x_0\in\Omega$ and a unit
vector $\xi\in T_{x_0}M$. The largest segment of the geodesic
through $(x_0,\xi)$ lying in $\Omega$ has its endpoints on
$\Gamma$, hence is a chord, so the planes tangent to it have zero
sectional curvature. Because $\Omega$ is geodesically convex, it
is star-shaped about $x_0$, and the standard Jacobi field argument
\cite[p.~157]{docarmo1992} shows that $\exp_{x_0}^{-1}$ maps
$\Omega$ isometrically onto a subset of $T_{x_0}M\simeq\R^n$ with
the Euclidean metric.

\subsubsection{Roundness}
The isometry of the previous step carries geodesics to geodesics,
so, $\Omega$ being geodesically convex, it identifies
$\overline\Omega$ with a compact convex domain in $\R^n$ bounded
by $\Gamma$; and it preserves mean curvature, so $\Gamma$ becomes
a closed embedded hypersurface of $\R^n$ with $H\equiv n-1$. By
Alexandrov's theorem \cite{alexandrov1962} such a hypersurface is
a sphere, here of radius $1$, and $\overline\Omega$ is a unit
ball. This proves the CMC part of
Theorem~\ref{thm:general-reduction}.

\subsection{The isoperimetric inequality for $3\leq n\leq 7$}
\label{sec:obstacle}\label{sec:obstacle-regular}

The isoperimetric-profile argument \cite{kleiner1992},
\cite[Thm.~7.1]{ghomi-spruck2022} reduces the isoperimetric
inequality to a mean-curvature estimate for isoperimetric regions
in a geodesic ball $B$. The boundaries of these regions have
constant mean curvature away from $\partial B$, so
Section~\ref{sec:CMC-int} does not apply to them directly. They
are $\C^{1,1}$ when $3\leq n\leq7$, which we assume in this
subsection, and may have a singular set of codimension at least
$7$ when $n\geq8$, which is treated in Section~\ref{sec:singular}.
For a domain $\Omega\subset B$ with boundary $\Gamma$, we call
$\Gamma\cap\partial B$ the \emph{contact set} and
$\Gamma\setminus\partial B$ the \emph{free part}. Since $\tilde F$
is a function of the chord and of the two oriented tangent hyperplanes
only, the hypothesis \eqref{eq:curved-condition} applies to the
pairs of points of such a boundary, although its mean curvature
is not $n-1$. We need the following extension of
Section~\ref{sec:CMC-int}.

\begin{proposition}\label{prop:obstacle}
Let $B\subset M$ be a geodesic ball, and $\Omega\subset B$ be
a domain whose boundary $\Gamma$ is a compact embedded
$\C^{1,1}$ hypersurface, smooth on the free part. Suppose
that, for some constant $H_0>0$, the mean curvature $H$ of $\Gamma$
satisfies $H=H_0$ on the free part and $H\leq H_0$
almost everywhere on the contact set. Then
\begin{equation}\label{eq:obstacle-cmc}
H_0^{n-1}\,|\Gamma|\geq (n-1)^{n-1}|\Sph^{n-1}| .
\end{equation}
\end{proposition}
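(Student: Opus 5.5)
We rescale so that $H_0=n-1$; since \eqref{eq:obstacle-cmc} is scale invariant, it then suffices to show $|\Gamma|\geq|\Sph^{n-1}|$. The plan is to run the argument of Section~\ref{sec:CMC-int} verbatim, keeping track of the one term in $\tilde F$ in which $H$ appears, and to show that replacing $H$ by $n-1$ only decreases $\tilde F$.

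\emph{Splitting off the mean curvature.} Since $\Gamma$ is $\C^{1,1}$, Propositions~\ref{lem:green} and~\ref{prop:boundary-integral} apply with the actual a.e.\ defined $H$. So \eqref{eq:flat-strategy} holds for the integrand $\tilde F$ of \eqref{eq:curved-pointwise}, computed with the true $H$, and with $\ell=|\Sph^{n-1}|$ by the mass condition \eqref{eq:mass-general}. By \eqref{eq:curved-D} and its reversed version, $H$ enters $\tilde F$ only through the terms $-rH(p)u\,c(r,v)$ and $-rH(q)v\,c(r,u)$. Let $\tilde F_0$ denote the same expression with $H(p)$ and $H(q)$ replaced by $n-1$. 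Then
$$
\tilde F=\tilde F_0+r\big(n-1-H(p)\big)u\,c(r,v)+r\big(n-1-H(q)\big)v\,c(r,u).
$$
By hypothesis \eqref{eq:curved-condition}, $\tilde F_0\geq0$: it is exactly the calibration integrand for the data $(r,u,v,w,\cE_p,\cE_q,J)$ of the pair, with $H=n-1$, and as remarked before the proposition it depends only on the chord and the two oriented tangent hyperplanes.

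\emph{Sign of the correction terms.} For a.e.\ $p$ on the free part, $H(p)=n-1$ and the correction vanishes. For a.e.\ $p$ on the contact set, $n-1-H(p)\geq0$, and $c(r,v)=a-bv^2\geq a-b>0$ because $b\geq0$ and $v^2\leq1$. It remains to check $u\geq0$ there. Since $\Omega\subset B$ and $p\in\partial B\cap\Gamma$, with $\Gamma$ being $\C^1$, $\Gamma$ is tangent to $\partial B$ at $p$, and $\nu_p$ is the outward normal of $B$. Geodesic balls in a Cartan--Hadamard manifold are convex, so the chord from $p$ to any $q\in\Gamma\subset\overline B$ lies in $\overline B$. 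Hence $\langle\gamma'(0),\nu_p\rangle\leq0$, that is, $u\geq0$. The symmetric argument at $q$ gives $v\geq0$ when $q$ is a contact point. So $\tilde F\geq\tilde F_0\geq0$ a.e., and \eqref{eq:flat-strategy} yields $\kappa|\Gamma|(|\Gamma|-|\Sph^{n-1}|)\geq0$.

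The main point needing care is this sign step: we must justify that $\nu$ agrees with the ball's outward normal at every contact point where $H$ exists, and that the a.e.\ pointwise formulas \eqref{eq:curved-D} remain valid on the contact set at $\C^{1,1}$ regularity, as shown in the proof of Proposition~\ref{lem:green}. Absolute integrability of all terms follows from \eqref{eq:Ap-bound} and Lemma~\ref{lem:diagonal-estimates}, since $H$ is bounded.
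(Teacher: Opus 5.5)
Your proposal is correct and follows the paper's proof essentially step for step. Like the paper, you rescale to $H_0=n-1$, split off the $H$-dependent terms $r\big(n-1-H(p)\big)u\,c(r,v)$ and $r\big(n-1-H(q)\big)v\,c(r,u)$ (this is \eqref{eq:H-correction} multiplied by $r^{n-1}$), show they are nonnegative using $c\geq a-b>0$ and $u\geq0$ at contact points by convexity of geodesic balls, and conclude by integrating with the Minkowski--Green and degree identities.
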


\begin{proof}
After rescaling the metric, we
may assume that $H_0=n-1$, and follow
Section~\ref{sec:CMC-int}. Lemma~\ref{lem:diagonal-estimates} and
Propositions~\ref{lem:green} and~\ref{prop:boundary-integral}
hold for $\C^{1,1}$ hypersurfaces, and
\eqref{eq:curved-condition} applies to $\Gamma$, as noted above.
The only new feature here is that $H\neq n-1$ on the contact
set, whereas $\tilde F$ was built with $H=n-1$. Let $\tilde F$
be given by \eqref{eq:curved-pointwise}, which was obtained from
\eqref{eq:curved-Ap-general}
with $H=n-1$. Since the only dependence of
\eqref{eq:curved-Ap-general} on $H$ is through the term
$-\partial_r\psi\,Hu$, we have, almost
everywhere on $\Gamma\times\Gamma$,
\begin{multline}\label{eq:H-correction}
\kappa+\tilde{\Delta}_\Gamma\psi_q+\tilde{\Delta}_\Gamma\psi_p
+\frac{v\phi(u)+u\phi(v)}{J}\\
=\frac{\tilde F}{r^{n-1}}
+\partial_r\psi(r,v)\big(n-1-H(p)\big)u+\partial_r\psi(r,u)\big(n-1-H(q)\big)v .
\end{multline}
Integrating over
$\Gamma\times\Gamma$, using Proposition~\ref{lem:green} and the
degree identity \eqref{eq:boundary-integral} on the left, yields
\begin{multline*}
\kappa|\Gamma|\big(|\Gamma|-|\Sph^{n-1}|\big)
=\int_{\Gamma\times\Gamma}\frac{\tilde F}{r^{n-1}}\\
+\int_{\Gamma\times\Gamma}
\Big(\partial_r\psi(r,v)\big(n-1-H(p)\big)u+\partial_r\psi(r,u)\big(n-1-H(q)\big)v\Big),
\end{multline*}
where the integrals are absolutely convergent by
Lemma~\ref{lem:diagonal-estimates}. The first term on the right is
nonnegative by \eqref{eq:curved-condition}. We claim that the second is
nonnegative as well. Note that $n-1-H\geq0$ almost everywhere on
$\Gamma$, and $n-1-H=0$ on the free part, while $\partial_r\psi=r^{2-n}c>0$ since $c\geq a-b>0$. Furthermore, we always
have $v(p,q)=u(q,p)$. Thus it suffices to show
that $u(p,q)\geq0$ whenever $p\in\Gamma\cap\partial B$. Since $\Gamma$
is $\C^1$ and lies in $B$, it is tangent to $\partial B$ at $p$, so
$\nu_p$ is the outward normal of $B$ at $p$. Geodesic balls in a
Cartan--Hadamard manifold are convex. Hence the geodesic $\gamma$ from
$p$ to $q$ lies in $B$, so $\langle\gamma'(0),\nu_p\rangle\leq0$, and
$u=-\langle\gamma'(0),\nu_p\rangle\geq0$ by \eqref{eq:u-v}. Hence
$|\Gamma|\geq|\Sph^{n-1}|$, and undoing the normalization gives
\eqref{eq:obstacle-cmc}.
\end{proof}

In
Theorem~\ref{thm:main}, $\Omega$ is any bounded measurable set, its
\emph{volume} $|\Omega|$ is its $n$-dimensional Hausdorff measure,  and its \emph{perimeter} $|\Gamma|$ is the $(n-1)$-dimensional
Hausdorff measure of the reduced boundary $\partial^*\Omega$
\cite{maggi2012}, which is the area of $\Gamma$ when $\Gamma$ is a $\C^1$ hypersurface. Both sides of \eqref{eq:main} are
unchanged when $\Omega$ is modified on a set of measure zero, and the
equality statement is understood accordingly: equality holds only if
such a modification of $\Omega$ is a domain isometric to a Euclidean
ball.

\begin{proof}[Proof of the isoperimetric part of Theorem~\ref{thm:general-reduction} for $n\leq7$]
Let $B\subset M$ be a geodesic ball with center $o$, and let
$0<V<|B|$. By \cite[Lem.~7.2]{ghomi-spruck2022} there is an
isoperimetric region $\Omega^*\subset B$ of volume $V$, a set of
least perimeter among the subsets of $B$ with that volume, and,
because $\dim M\leq7$, its boundary $\Gamma\coloneqq\partial\Omega^*$
is $\C^{1,1}$, is smooth with constant mean curvature $H_0$ on the
free part $\Gamma\setminus\partial B$, and has $H\leq H_0$ almost
everywhere on the contact set $\Gamma\cap\partial B$. (The mean
curvature in \cite{ghomi-spruck2022} is the average of the
principal curvatures, ours is their sum.) We first check that
$H_0>0$. Put $f\coloneqq\dist(o,\cdot)^2/2$; Hessian comparison
gives $\nabla^2f\geq g$, so by \eqref{eq:laplacian-convention}
$$
\Delta_\Gamma f
=\tr_\Gamma\nabla^2f-H\langle\nabla f,\nu\rangle
\geq(n-1)-H\langle\nabla f,\nu\rangle
$$
almost everywhere on $\Gamma$. On the contact set $\nabla f=R\nu$,
with $R$ the radius of $B$, so $(H_0-H)\langle\nabla f,\nu\rangle$
is nonnegative there, and it vanishes on the free part.
Integrating over the closed hypersurface $\Gamma$,
$$
0=\int_\Gamma\Delta_\Gamma f
\geq(n-1)|\Gamma|-H_0\int_\Gamma\langle\nabla f,\nu\rangle
=(n-1)|\Gamma|-H_0\int_{\Omega^*}\Delta f ,
$$
and since $\Delta f\geq n$ this forces
$H_0\geq(n-1)|\Gamma|/\int_{\Omega^*}\Delta f>0$.
Proposition~\ref{prop:obstacle} therefore applies:
$$
H_0^{n-1}\,|\partial\Omega^*|\geq (n-1)^{n-1}|\Sph^{n-1}| .
$$

Write $\mathcal I_B(V)$ for the isoperimetric profile of $B$, so
that $\mathcal I_B(V)=|\partial\Omega^*|$, and $H_0(V)$ for the
mean curvature of the free part of the boundary of an
isoperimetric region of volume $V$. As shown in the proof of
\cite[Thm.~7.1]{ghomi-spruck2022} and the references there,
$\mathcal I_B$ is continuous and increasing, vanishes as
$V\downarrow0$, and satisfies $\mathcal I_B'(V)=H_0(V)$ for almost
every $V$, with our normalization of $H$. Consequently, for almost
every $V$, Proposition~\ref{prop:obstacle} gives
\begin{multline*}
\big(\mathcal I_B^{n/(n-1)}\big)'
=\tfrac{n}{n-1}\,\mathcal I_B^{1/(n-1)}\,\mathcal I_B'
=\tfrac{n}{n-1}\big(\mathcal I_B'^{\,n-1}\,\mathcal I_B\big)^{1/(n-1)}\\
\geq\tfrac{n}{n-1}\big((n-1)^{n-1}|\Sph^{n-1}|\big)^{1/(n-1)}
=n|\Sph^{n-1}|^{1/(n-1)}.
\end{multline*}
Because $\mathcal I_B^{n/(n-1)}$ is increasing, it dominates the
integral of its derivative,
$
\mathcal I_B(V)^{n/(n-1)}
\geq\int_0^V\big(\mathcal I_B^{n/(n-1)}\big)'
\geq n|\Sph^{n-1}|^{1/(n-1)}\,V,
$
which, as $|\Sph^{n-1}|=n|\B^n|$, is
$$
\mathcal I_B(V)\geq|\Sph^{n-1}|\Big(\frac{V}{|\B^n|}\Big)^{(n-1)/n},
$$
the Euclidean isoperimetric inequality within $B$. Every bounded
set lies in some geodesic ball, so \eqref{eq:main} follows.

Suppose finally that equality holds in \eqref{eq:main} for a
bounded set $\Omega$. Then $\Omega$ has least perimeter among
bounded sets of its volume, so it is an isoperimetric region in
every geodesic ball that contains it and, as $n<8$,
\cite[Lem.~7.2]{ghomi-spruck2022} yields a representative of
$\Omega$ whose boundary, again denoted $\Gamma$, is a smooth
compact embedded hypersurface. Equality in \eqref{eq:main} also
makes $\Omega$ a minimizer of the isoperimetric deficit
$|\partial(\cdot)|-|\Sph^{n-1}|\big(|\cdot|/|\B^n|\big)^{(n-1)/n}$
among bounded sets, whose first variation gives
$H=(n-1)|\Gamma|/(n|\Omega|)$; together with
$|\Gamma|^n=n^n|\B^n|\,|\Omega|^{n-1}$ this yields
$H^{n-1}|\Gamma|=(n-1)^{n-1}|\Gamma|^n/(n^{n-1}|\Omega|^{n-1})=(n-1)^{n-1}|\Sph^{n-1}|$.
Rescaling the metric so that $H=n-1$ and applying the rigidity
argument of Section~\ref{sec:CMC-int}, we conclude that $\Omega$
is isometric to a Euclidean ball.
\end{proof}

\subsection{The isoperimetric inequality for $n\geq 8$}
\label{sec:singular}

In dimensions $n\geq8$ the isoperimetric regions in geodesic balls may have
a singular set of codimension at least $7$, across which the
identities extend by Proposition~\ref{prop:singular-identities}.
Here we show that the argument of Section~\ref{sec:obstacle-regular}
extends as well, which completes the proof of the isoperimetric
part of Theorem~\ref{thm:general-reduction}, and hence of the whole
theorem, in every dimension. We first establish the extension of
Proposition~\ref{prop:obstacle}.

\begin{proposition}\label{prop:singular-obstacle}
Let $\Gamma$ be the boundary of an isoperimetric region in a
geodesic ball, and $H_0$ be the mean curvature of its free
part. Then $H_0>0$ and
$$
H_0^{n-1}|\Gamma|\geq (n-1)^{n-1}|\Sph^{n-1}|.
$$
\end{proposition}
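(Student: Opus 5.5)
The plan is to repeat the proof of Proposition~\ref{prop:obstacle} and the positivity argument for $H_0$ from the $n\leq7$ case, now with every integration over $\Gamma$ justified by Proposition~\ref{prop:singular-identities} and the cutoffs of Lemma~\ref{lem:cutoffs}. The regularity facts we use are those recalled after Proposition~\ref{prop:singular-identities}. The singular set $S$ is compact, disjoint from $\partial B$, and satisfies $\mathcal H^{n-2}(S)=0$. The regular part is locally $\C^{1,1}$, has bounded mean curvature, and satisfies $H=H_0$ on the free part and $H\leq H_0$ a.e.\ on the contact set, by \cite[Lem.~7.2]{ghomi-spruck2022}.

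First we show $H_0>0$. Take $f=\dist(o,\cdot)^2/2$. Then $Z\coloneqq\nabla_\Gamma f$ is a bounded, locally Lipschitz tangent field on $\Gamma\setminus S$. Its divergence is $\Delta_\Gamma f=\tr_\Gamma\nabla^2f-H\langle\nabla f,\nu\rangle$, which is bounded, so $\diver_\Gamma Z\in L^1(\Gamma)$. By \eqref{eq:cutoff-divergence}, $\int_\Gamma\Delta_\Gamma f=0$. As before, this yields $0\geq(n-1)|\Gamma|-H_0\int_\Gamma\langle\nabla f,\nu\rangle$. The remaining step is the divergence theorem $\int_\Gamma\langle\nabla f,\nu\rangle=\int_{\Omega^*}\Delta f$ for the set of finite perimeter $\Omega^*$. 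It holds by the Gauss--Green formula on the reduced boundary, since $S$ carries no perimeter measure. Because $\Delta f\geq n>0$, we obtain $H_0>0$.

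Next, rescale so that $H_0=n-1$ and integrate \eqref{eq:H-correction} over $\Gamma\times\Gamma$. Since $S$ has measure zero, this identity holds a.e.\ on $(\Gamma\setminus S)^2$. Proposition~\ref{prop:singular-identities} supplies the Minkowski--Green identity for every regular $q$, and hence after integration in $q$. It also supplies the degree identity, because the calibration's $\phi$ is Lipschitz with $\phi(0)=0$. Both integrands are absolutely integrable on $\Gamma\times\Gamma$. The left side therefore integrates to $\kappa|\Gamma|(|\Gamma|-|\Sph^{n-1}|)$ by \eqref{eq:mass-general}.

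On the right we need the correction terms to be integrable before we can split the integral. They are bounded by $C r^{2-n}|u|$, which is at most $C(r^{2-n}+u^2r^{1-n}\cdot r)$. Lemma~\ref{lem:singular-area} and Lemma~\ref{lem:singular-angles} then give integrability. Hence $\int\tilde F/r^{n-1}$ is finite as well. The term $\tilde F\geq0$ holds pointwise by \eqref{eq:curved-condition}, because $\tilde F$ depends only on the chord and the two tangent hyperplanes at regular points. The correction terms are nonnegative by the same convexity argument as before: $u(p,q)\geq0$ when $p$ lies on the contact set. Contact points are regular, since $S\cap\partial B=\varnothing$, so $\Gamma$ is tangent to $\partial B$ there, and chords stay in the convex ball $B$.

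The main obstacle is that Proposition~\ref{prop:singular-identities} is stated with the Minkowski--Green identity for each regular $q$. To use it here we must integrate in $q$ and then combine it with the degree identity and the correction terms by Fubini. This requires absolute integrability of every piece jointly on $\Gamma\times\Gamma$, not only for the sum. I would check this term by term, using the uniform bound \eqref{eq:laplace-bound} together with Lemma~\ref{lem:singular-angles}. I would also confirm that chords ending at points of $S$ contribute nothing, since $S\times\Gamma$ has measure zero. Undoing the scaling then gives $H_0^{n-1}|\Gamma|\geq(n-1)^{n-1}|\Sph^{n-1}|$.
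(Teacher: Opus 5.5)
Your proposal is correct and follows the paper's proof essentially step for step. It proves $H_0>0$ by the cutoff argument \eqref{eq:cutoff-divergence} and finite-perimeter Gauss--Green with $f=\dist(o,\cdot)^2/2$, then integrates \eqref{eq:H-correction} using Proposition~\ref{prop:singular-identities}, with nonnegativity from \eqref{eq:curved-condition} and from the convexity of $B$ at the (regular) contact points. The only cosmetic difference is how you get integrability of $\tilde F/r^{n-1}$: you bound the correction terms first, while the paper reads it off directly from \eqref{eq:laplace-bound}. In your version $|u|\leq1$ already gives the bound $Cr^{2-n}$, so Lemma~\ref{lem:singular-angles} is not needed there.
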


\begin{proof}
Let $\Omega^*$ be the isoperimetric region with boundary $\Gamma$,
and let $S$ be its singular set.
By \cite[Lem.~7.2(i)--(ii)]{ghomi-spruck2022}, the regular free
part $\Gamma\setminus(S\cup\partial B)$ is smooth with constant
mean curvature $H_0$, and
$\Gamma$ is $\C^{1,1}$ near $\partial B$ with $H\leq H_0$ almost
everywhere on the contact set. Let $o$ be the center of $B$ and set
$f\coloneqq\dist(o,\cdot)^2/2$. The tangential gradient of $f$ and
$\Delta_\Gamma f=\tr_\Gamma\nabla^2f-H\langle\nabla f,\nu\rangle$
are bounded. Thus \eqref{eq:cutoff-divergence} gives
$\int_\Gamma\Delta_\Gamma f=0$. Since $\nabla^2f\geq g$,
$H_0-H$ vanishes on the free part, and
$\langle\nabla f,\nu\rangle\geq0$ on the contact set, the argument
of Section~\ref{sec:obstacle-regular} gives
$$
0\geq (n-1)|\Gamma|-H_0\int_\Gamma\langle\nabla f,\nu\rangle
=(n-1)|\Gamma|-H_0\int_{\Omega^*}\Delta f.
$$
The last equality is the finite-perimeter Gauss--Green formula
\cite{maggi2012}. Since $\Delta f\geq n$, the volume integral is
positive and finite, so $H_0>0$.

Rescale the metric to arrange $H_0=n-1$. The pointwise hypothesis of
Theorem~\ref{thm:general-reduction} applies to the rescaled
Cartan--Hadamard manifold as well. The identity
\eqref{eq:H-correction} holds almost everywhere on
$\Gamma\times\Gamma$, with the same function $\tilde F$. The bound
\eqref{eq:laplace-bound} holds with $n-1$ in place of $H$, so
$\tilde F/r^{n-1}$ is absolutely integrable on $\Gamma\times\Gamma$;
the other terms are, by Proposition~\ref{prop:singular-identities},
and their integrals are given by it. Consequently,
\begin{multline*}
\kappa|\Gamma|\big(|\Gamma|-|\Sph^{n-1}|\big)
=\int_{\Gamma\times\Gamma}\frac{\tilde F}{r^{n-1}}\\
+\int_{\Gamma\times\Gamma}
\Big(\partial_r\psi(r,v)(n-1-H(p))u+\partial_r\psi(r,u)(n-1-H(q))v\Big).
\end{multline*}
The first term is nonnegative by hypothesis. In the second,
$n-1-H$ is nonnegative and supported on the contact set. At a
contact point the outward normal of $\Gamma$ agrees with that of
$B$, so convexity gives $u(p,q)\geq0$ if $p$ is a contact point,
and $v(p,q)\geq0$ if $q$ is a contact point. Since $\partial_r\psi>0$,
the second term is nonnegative as well. Hence
$|\Gamma|\geq|\Sph^{n-1}|$, and rescaling proves the assertion.
\end{proof}

\begin{proof}[Proof of the isoperimetric part of Theorem~\ref{thm:general-reduction}]
Let $\mathcal I_B$ be the isoperimetric profile of a geodesic
ball $B$. The profile argument of Section~\ref{sec:obstacle-regular}
applies with Proposition~\ref{prop:singular-obstacle} in place of
Proposition~\ref{prop:obstacle}, and the two properties of
$\mathcal I_B$ it uses hold without smoothness of the minimizers.
Radial contraction about the center of $B$ is $t$-Lipschitz for
$0<t\leq1$, by Jacobi comparison, so it decreases perimeter.
The volumes of the contracted images vary continuously from zero
to the original volume. Contracting a minimizer of larger volume
therefore shows that $\mathcal I_B$ is nondecreasing. Small
geodesic balls give $\mathcal I_B(0^+)=0$. At a differentiability
point of the profile, a volume-changing variation supported in a
regular free-boundary patch touches the profile from above in
both volume directions. Its perimeter derivative divided by its
volume derivative is $H_0$, so
$\mathcal I_B'(V)=H_0(V)$; compare
\cite[proof of Thm.~7.1]{ghomi-spruck2022}.

Proposition~\ref{prop:singular-obstacle} consequently gives,
almost everywhere,
$$
\left(\mathcal I_B^{n/(n-1)}\right)'
=\frac{n}{n-1}\mathcal I_B^{1/(n-1)}H_0
\geq n|\Sph^{n-1}|^{1/(n-1)}.
$$
\begingroup\emergencystretch=2em
Since $\mathcal I_B^{n/(n-1)}$ is nondecreasing, its increase is
at least the integral of its almost-everywhere derivative.
Integrating from zero as in Section~\ref{sec:obstacle-regular} yields
$\mathcal I_B(V)^{n}\geq n^{n-1}|\Sph^{n-1}|V^{n-1}$.
Every bounded set is contained in a sufficiently large geodesic
ball. Thus \eqref{eq:main} follows in dimension $n$.\par
\endgroup

Suppose now that equality occurs for a bounded set $\Omega$.
Choose a geodesic ball whose interior contains $\overline\Omega$.
As in the equality argument of Section~\ref{sec:obstacle-regular},
$\Omega$ is a perimeter minimizer of its volume and a minimizer of
the isoperimetric deficit. Take the representative for which
$\Gamma\coloneqq\partial\Omega$ is the support of its perimeter
measure, and let $S$ be its singular set. First variations supported
in regular free-boundary patches therefore give
$H=(n-1)|\Gamma|/(n|\Omega|)$ on the regular part. Equality in the
isoperimetric inequality gives
$H^{n-1}|\Gamma|=(n-1)^{n-1}|\Sph^{n-1}|$. After rescaling, $H=n-1$ and
$|\Gamma|=|\Sph^{n-1}|$.

Proposition~\ref{prop:singular-identities} shows that the integral
of the nonnegative function $\tilde F/r^{n-1}$ is zero. Continuity on
regular pairs implies $\tilde F=0$ on every such pair, and
\eqref{eq:curved-condition} yields $u=v=r/2$ and $J=r^{n-1}$ there. We show
that this excludes $S$.
Fix one $q\in\Gamma\setminus S$. For every regular $p\neq q$,
$v(p,q)=r(p,q)/2$ gives
$$
-\left\langle\exp_q^{-1}(p),\nu_q\right\rangle
=\frac12\left|\exp_q^{-1}(p)\right|^2,
\qquad
\left|\exp_q^{-1}(p)+\nu_q\right|=1.
$$
By density of $\Gamma\setminus S$ in $\Gamma$, the last equality
holds for every $p\in\Gamma$, including $p=q$. Hence
$$
\exp_q^{-1}(\Gamma)\subset
\{\xi\in T_qM:|\xi+\nu_q|=1\}.
$$
Since $\exp_q$ is a global smooth diffeomorphism, the characteristic
function of $\exp_q^{-1}(\Omega)$ has distributional derivative
supported on this Euclidean sphere. It is therefore constant almost
everywhere on the enclosed ball and on its connected exterior.
Boundedness of $\Omega$ forces the exterior constant to be zero,
and positive volume forces the interior constant to be one. Thus,
up to a null set,
$$
\exp_q^{-1}(\Omega)=\{\xi\in T_qM:|\xi+\nu_q|<1\}.
$$
The boundary $\Gamma$ is consequently the image of the whole
sphere under $\exp_q$, so $S=\varnothing$. Consequently $\Gamma$ is smooth. The equalities for its chords
hold everywhere, and the convexity, flatness, and roundness
argument in Section~\ref{sec:CMC-int} applies without a restriction
on dimension. It identifies $\Omega$, up to a null set, with a
Euclidean unit ball in the normalized metric. Undoing the scaling
proves the equality assertion.
\end{proof}

\section{Proofs of Theorems~\ref{thm:main} and~\ref{thm:CMC}}
\label{sec:calibrations-section}\label{sec:beyond-seven}

By our calibration result, Theorem~\ref{thm:general-reduction}, our main results, Theorems~\ref{thm:main}
and~\ref{thm:CMC}, follow from the next proposition, whose proof
occupies the rest of this section.

\begin{proposition}\label{prop:existence}\label{prop:radial-calibrations}\label{prop:calibrations-7}\label{prop:calibrations-89}\label{prop:calibrations}\label{lem:G-num}\label{lem:G-positive}\label{lem:J-num}\label{lem:G-positive-89}
There is a calibration $(\psi,\phi,\kappa)$ in each dimension
$3\leq n\leq9$.
\end{proposition}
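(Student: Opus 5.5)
The plan is to make $\tilde F$ explicit in the chord data and reduce \eqref{eq:curved-condition} to a finite-dimensional inequality. Substituting \eqref{eq:curved-D} and its reversed version into \eqref{eq:curved-pointwise} with $H=n-1$ gives
\begin{multline*}
\tilde F=\kappa r^{n-1}+r\,\partial_rc(r,v)(1-u^2)+r\,\partial_rc(r,u)(1-v^2)
+\big((n-1)u^2-(n-1)ru\big)c(r,v)\\
+\big((n-1)v^2-(n-1)rv\big)c(r,u)
+w\big(u\,\partial_vc(r,v)+v\,\partial_uc(r,u)\big)\\
+r\big(c(r,v)\tr_\Gamma\cE_p+c(r,u)\tr_\Gamma\cE_q\big)
+\frac{r^{n-1}}{J}\big(v\phi(u)+u\phi(v)\big).
\end{multline*}
The curvature of $M$ enters only through $\tr_\Gamma\cE_p$, $\tr_\Gamma\cE_q\geq0$, the factor $\tau\coloneqq r^{n-1}/J\in(0,1]$, and $w$ with $|w|\leq h$ by \eqref{eq:zeta-bound-flat}. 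Since $c>0$, I would bound the $\cE$-terms below by Proposition~\ref{lem:jacobi-endpoint} with $\lambda=c(r,u)$ and $\mu=c(r,v)$. This gives at least $2|uv|\sqrt{c(r,u)c(r,v)}\,(1-\tau)$. The result is then affine in $\tau$ and in $w$. It therefore suffices to check nonnegativity at $\tau\in\{0,1\}$ and $w=\pm h$. This yields an explicit function $\Phi(r,u,v)$ on $(0,\infty)\times[-1,1]^2$ whose nonnegativity, together with its zero set, must be verified. For $n=8,9$, where this bound will be too lossy, I would instead apply Proposition~\ref{thm:joint}. There I would take $\CC$ with $U,V$ the tangential projections weighted by $c$, and $W$ chosen to absorb the $w$-term, since $w=\langle\tilde A\tilde\nu_p,\tilde\nu_q\rangle$ and $I-\tilde A$ is the off-diagonal block of $\M$. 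This exploits the coupling between $w$ and the Jacobian defect, so only $\lambda(\CC)(1-\tau)$ needs to be paid.

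Next I would fix the Euclidean extremal case. On $\Sph^{n-1}$ the chords have $u=v=r/2$, $w=1-r^2/2$ and $\tau=1$, and $\tilde F$ must vanish there identically in $r\in(0,2]$. This gives a linear ODE/functional relation linking $a,b,\phi,\kappa$. Combined with the mass condition \eqref{eq:mass-general}, it determines much of the data. In dimensions $3$ and $5$ I expect pure radial weights ($b=0$) and a monomial $\phi(t)=t^{n-2}$, as in \cite{chen-ghomi-wang}, to work directly. In the other dimensions I would take $a,b$ polynomial in $r$ (or $r^2$) and $\phi$ a polynomial with $\phi(0)=0$, with coefficients tuned dimension by dimension. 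The extremal relation should hold, and $\Phi$ should additionally be minimized, to second order, exactly along $u=v=r/2$.

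The verification of $\Phi\geq0$ is the last step. Since $\Phi$ is polynomial in $(r,u,v,\sqrt{\cdot})$ after clearing the square root, I would split into the regimes $uv\geq0$ and $uv<0$, and then into $r$ small, $r$ large and intermediate $r$. For large $r$, the $\kappa r^{n-1}$ term must dominate, which constrains the growth of $a$ and $b$. Near the diagonal, expansion using $u,v=O(r)$ is not available, since $u,v$ are free here, so a genuine inequality is needed there. The intermediate range I would certify by sum-of-squares or interval arithmetic, with rigorous bounds recorded (as in the accompanying notebook). Strictness away from $u=v=r/2$, $\tau=1$ gives the equality clause of \eqref{eq:curved-condition}; in particular, when $uv\neq0$ the $(1-\tau)$ coefficient must be strictly positive.

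The main obstacle is finding weights for which the worst case over $w=\pm h$ and $\tau\in\{0,1\}$ is still nonnegative. This is delicate when $\tau\to0$, since then the degree term disappears and only the Hessian-comparison gain $2|uv|\sqrt{c\,c}$ remains to compensate. It is worse still in dimensions $8$ and $9$, where I expect the refinement through Proposition~\ref{thm:joint} with a well-chosen $W$ to be indispensable.
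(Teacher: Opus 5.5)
\textbf{There is a genuine gap: you never exhibit a calibration.} Your reduction is essentially the paper's. For $n\leq7$ it also bounds the $\cE$-terms by Proposition~\ref{lem:jacobi-endpoint} with $\lambda=c(r,u)$, $\mu=c(r,v)$, then interpolates linearly in $r^{n-1}/J$; it uses the weaker harmonic mean $4uvc_pc_q/(c_p+c_q)$ and $|w|\leq h_0$ to keep everything polynomial. For $n=8,9$ it applies Proposition~\ref{thm:joint} with $W=2buv\,\tilde\nu_p\tilde\nu_q^T$, exactly the absorption you describe.

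But the proposition is an existence claim, and its whole content is explicit data plus verification of the resulting scalar inequalities in every sign sector and for all $r>0$. The paper supplies this data:
\begin{itemize}
\item $n=3$: $\psi=2\log r$, $\phi=0$, $\kappa=2$, where $F=2((r-u-v)^2+(u-v)^2)$.
\item $n=5$: $\psi=2\log r-2/r^2$, $\phi=8t^3$, $\kappa=5$, with a case split on the sign of $2(u^2+v^2)-r^2-2$ and an explicit square decomposition.
\item $n=4,6,7,8,9$: the piecewise-polynomial $a,b$ and $t_+$-polynomial $\phi$ of Appendix~\ref{app:calibrations}. These are certified by Bernstein coefficients, with Lemmas~\ref{lem:equal-angle-comparison} and~\ref{lem:tangent-continuation} handling unequal angles and long chords.
\end{itemize}
Saying that coefficients will be ``tuned dimension by dimension'' gives no reason the sphere relation and global positivity can be met simultaneously. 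The paper's own structure shows this is delicate: non-monomial $\phi$ is needed in dimensions $4$ and $6$--$9$, and Comparison~II in dimensions $8,9$. That simultaneous satisfiability is exactly what has to be proved.

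\textbf{Several ingredients your plan would need are missing or misstated.}
\begin{itemize}
\item \emph{Extension of $\phi$.} Equation \eqref{eq:sphere-Q} fixes $\phi$ only on $(0,1]$. The paper extends by zero, so $\phi\geq0$ and $\phi=0$ on $[-1,0]$. Then the degree term is nonpositive unless $u,v>0$, and the Jacobian defect must be compensated only in the positive sector. Taking $\phi$ to be a polynomial on all of $[-1,1]$ gives up this freedom.
\item \emph{Positivity of $\CC$ for $n=8,9$.} Your $\CC$ must be positive semidefinite. This amounts to $c_pc_q\geq4b^2(1-u^2)(1-v^2)$, which is why the weights there satisfy $a\geq2b$. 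You also still need a lower bound for $\lambda(\CC)$; the paper gets one by reducing to $\min\{\mathcal J_1,\mathcal J_2\}$.
\item \emph{The value of $w$ on sphere chords.} On chords of $\Sph^{n-1}$ one has $w=h=1-r^2/4$, not $1-r^2/2$ (the latter is $\langle\nu_p,\nu_q\rangle$, not the transverse product). With your value the extremal relation you would impose differs from \eqref{eq:sphere-Q}. Whenever $b>0$, the resulting data would make $F$ equal $-buvr^2<0$ on actual sphere chords.
\item \emph{The mass condition.} It is not an extra constraint on the data. It follows from the sphere relation by integration.
\end{itemize}
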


The proof is divided into three groups of dimensions. The data and
scalar verifications are given in Section~\ref{sec:G} for $n=3,5$
and in Appendix~\ref{app:calibrations} for $n=4,6,7,8,9$.
We combine the scalar inequalities with the endpoint comparisons
of Sections~\ref{subsec:jacobi} and~\ref{subsec:joint} to establish
\eqref{eq:curved-condition}. Comparison I suffices for $3\leq n\leq7$
(Sections~\ref{sec:G} and~\ref{sec:four-six-seven}), whereas
comparison II is used for $n=8,9$ (Section~\ref{subsec:integrand-89}).

\subsection{Dimensions 3 and 5}\label{sec:G}\label{sec:calibrations}\label{sec:flat-remainder}

We first record a decomposition valid in every dimension.
By \eqref{eq:curved-D}, with $H=n-1$, and since
$\partial_v c=-2bv$, the associated integrand satisfies
\begin{equation}\label{eq:curved-exact-4}
\tilde F=F+r\big(c(r,v)\tr_\Gamma\cE_p+c(r,u)\tr_\Gamma\cE_q\big)
-\Big(1-\frac{r^{n-1}}{J}\Big)\big(v\phi(u)+u\phi(v)\big),
\end{equation}
where
$$
F\coloneqq L_0-4buvw+v\phi(u)+u\phi(v),
$$
\begin{gather}\label{eq:L}
L_0(r,u,v)\coloneqq{}\kappa r^{n-1}
+r\big(\partial_r c(r,u)(1-v^2)+\partial_r c(r,v)(1-u^2)\big)\\ \notag
+(n-1)\big(v(v-r)c(r,u)+u(u-r)c(r,v)\big).
\end{gather}
In particular, $\tilde F=F$ in $\R^n$. Now we choose the calibrations
\begin{align*}
\psi(r)&=2\log r,&\phi&=0,&\kappa&=2\qquad(n=3),\\
\psi(r)&=2\log r-\frac2{r^2},&\phi(t)&=8t^3,&\kappa&=5\qquad(n=5).
\end{align*}
The mass condition \eqref{eq:mass-general} follows from
$C_\psi=2$, $I_\phi=0$ for $n=3$, and $C_\psi=4$, $I_\phi=2/3$
for $n=5$.  For $n=3$, positivity is given by
$F=2\big((r-u-v)^2+(u-v)^2\big)$.  For $n=5$, putting
$\sigma\coloneqq u+v$ and $\tau\coloneqq u-v$ gives
$$
F=(r-\sigma)^2(5r^2+2r\sigma+\sigma^2+8)
+\tau^2(2r^2+8-\tau^2).
$$
Since $|\tau|\leq2$, in both dimensions $F\geq0$, with equality
exactly when $u=v=r/2$.
It remains to verify \eqref{eq:curved-condition}.  Assume $H=n-1$,
let $\tilde F$ be as in \eqref{eq:curved-pointwise}, and put
$\theta\coloneqq1-r^{n-1}/J\in[0,1)$.  For $n=3$,
\eqref{eq:curved-exact-4}, with $c=2$ and $\phi=0$, gives
$\tilde F\geq F\geq0$.  Equality
forces $u=v=r/2$ and both traces to vanish; then $uv>0$, and
Proposition~\ref{lem:jacobi-endpoint} together with
\eqref{eq:B-lower-bound} gives $J=r^2$.
For $n=5$, since $c=2(r^2+2)$, \eqref{eq:curved-exact-4} gives
$$
\tilde F
=F+2r(r^2+2)(\tr_\Gamma\cE_p+\tr_\Gamma\cE_q)
-8uv(u^2+v^2)\theta.
$$
If $uv\leq0$, then $\tilde F\geq F\geq0$; equality would force
$F=0$, hence $u=v=r/2$ and thus $uv>0$, a contradiction.  Hence
$\tilde F>0$ in this case.
Suppose $uv>0$.  Proposition~\ref{lem:jacobi-endpoint} gives
$
\tr_\Gamma\cE_p+\tr_\Gamma\cE_q\geq\frac{2uv}{r}\theta,
$
and therefore, with $\beta\coloneqq2(u^2+v^2)-r^2-2$,
$
\tilde F\geq F-4uv\theta\beta.
$
If $\beta\leq0$, this is at least $F$.  Equality forces $F=0$, hence
$u=v=r/2$; then $\beta=-2$, so $\theta=0$ and $J=r^4$.  The exact
formula above then forces both traces to vanish.
Finally, if $\beta>0$, then $r^2<2$, since $u^2+v^2\leq2$.  As
$\theta<1$,
$$
\tilde F>
F-4uv\beta
=(r^2+6)\tau^2
+(3r^2+10)\Big(\sigma-\frac{4r(r^2+2)}{3r^2+10}\Big)^2
+\frac{r^2(16+10r^2-r^4)}{3r^2+10}>0.
$$
Thus these choices define calibrations in dimensions $3$ and $5$.

\subsection{Dimensions 4, 6, and 7}\label{sec:four-six-seven}

For these dimensions we use the lower bound
$$
L\coloneqq L_0-4b|uv|h_0,\qquad
F_0\coloneqq L+v\phi(u)+u\phi(v).
$$
Since $b\geq0$, the estimate $|w|\leq h_0$ in
\eqref{eq:zeta-bound-flat} gives $F\geq F_0$. For the choices below,
$F_0$ is polynomial on each radial branch and angular sign sector.
Put $c_p\coloneqq c(r,u)$, $c_q\coloneqq c(r,v)$, and
\begin{equation}\label{eq:G-def}
G\coloneqq L+\frac{4uvc_pc_q}{c_p+c_q}.
\end{equation}
Take the calibrations given in Sections~\ref{app:four-calibration},
\ref{app:six-calibration}, and~\ref{app:seven-calibration}.
For these, Appendix~\ref{app:calibrations} verifies
\eqref{eq:mass-general}, $F_0\geq0$ with equality exactly when
$u=v=r/2$, and $G>0$ whenever $u,v>0$; moreover $\phi\geq0$
and $\phi=0$ on $[-1,0]$.
It remains to verify \eqref{eq:curved-condition}. Assume $H=n-1$.
By \eqref{eq:curved-exact-4} and $F\geq F_0$,
\begin{equation}\label{eq:curved-bound}
\tilde F\geq
L+r\big(c_q\tr_\Gamma\cE_p+c_p\tr_\Gamma\cE_q\big)
+\frac{r^{n-1}}J\big(v\phi(u)+u\phi(v)\big).
\end{equation}
If $v\phi(u)+u\phi(v)\leq0$, then
$
\tilde F\geq
F_0+r\big(c_q\tr_\Gamma\cE_p+c_p\tr_\Gamma\cE_q\big)\geq0.
$
Equality forces $u=v=r/2$ and both traces to vanish; since $uv>0$,
Proposition~\ref{lem:jacobi-endpoint} and \eqref{eq:B-lower-bound}
then give $J=r^{n-1}$.
If $v\phi(u)+u\phi(v)>0$, then $u,v>0$, and
Proposition~\ref{lem:jacobi-endpoint} gives
$$
r\big(c_q\tr_\Gamma\cE_p+c_p\tr_\Gamma\cE_q\big)
\geq\frac{4uvc_pc_q}{c_p+c_q}
       \Big(1-\frac{r^{n-1}}J\Big),\quad
\tilde F
\geq\frac{r^{n-1}}J F_0+
\Big(1-\frac{r^{n-1}}J\Big)G\geq0.
$$
Since $G>0$, equality forces $J=r^{n-1}$ and $F_0=0$;
hence $u=v=r/2$, and \eqref{eq:curved-bound} then forces both
traces to vanish. Thus these choices define calibrations in dimensions
$4$, $6$, and $7$.

\subsection{Dimensions 8 and 9}\label{subsec:integrand-89}

Here we retain the sharper bound $|w|\leq h$. Since $b\geq0$ and
$h\leq h_0$, we have
\begin{equation}\label{eq:F-zero}
F\geq F_1\geq F_0,\qquad
F_1\coloneqq L_0-4b|uv|h+v\phi(u)+u\phi(v).
\end{equation}
Thus $F_1$ depends only on $r,u,v$. Both inequalities are equalities
when $b=0$ and on unit-sphere chords, where $w=h=h_0=1-t^2$.
A sufficient condition for the pointwise requirement in $\R^n$ is
\begin{equation}\label{eq:scalar-conditions-F}
F_1\geq0,\qquad
F_1=0\ \Longrightarrow\ u=v=\frac r2,
\end{equation}
for $r>0$ and $-1\leq u,v\leq1$.
For $u,v>0$ let
\begin{equation}\label{eq:J-def}
\mathcal J_1\coloneqq L+4buvh_0+\chi uv,\qquad
\mathcal J_2\coloneqq L+\chi\Big(1-\frac{(u-v)^2}{2}\Big),
\end{equation}
where $\chi\coloneqq2\min\{c_p,c_q\}$ for $n=8$ and
$\chi\coloneqq4c_pc_q/(c_p+c_q)$ for $n=9$; in both cases
$\chi\leq2\sqrt{c_pc_q}$.
Take $a,b,\phi,\kappa$ as in
Sections~\ref{app:eight-calibration} and~\ref{app:nine-calibration}.
As verified there, \eqref{eq:mass-general} and
\eqref{eq:scalar-conditions-F} hold, $a\geq2b$, $\phi\geq0$ with
$\phi=0$ on $[-1,0]$, and $\mathcal J_1,\mathcal J_2>0$ for
$r>0$ and $u,v>0$. It
remains to
verify \eqref{eq:curved-condition}. Assume $H=n-1$ and let
$\tilde F$ be as in \eqref{eq:curved-pointwise}. By
\eqref{eq:curved-exact-4}, with $L_0$ as in \eqref{eq:L},
\begin{equation}\label{eq:curved-exact}
\tilde F=L_0-4buvw+r\big(c(r,v)\tr_\Gamma\cE_p+c(r,u)\tr_\Gamma\cE_q\big)
+\frac{r^{n-1}}{J}\big(v\phi(u)+u\phi(v)\big),
\end{equation}
where now $w=\langle \tilde A\tilde\nu_p,\tilde\nu_q\rangle$, with
$|w|\leq h$ by \eqref{eq:zeta-bound-flat}. We show that
$\tilde F\geq0$, with equality only if $u=v=r/2$, $J=r^{n-1}$,
and $\tr_\Gamma\cE_p=\tr_\Gamma\cE_q=0$.
If $u,v$ are not both positive, then $v\phi(u)+u\phi(v)\leq0$, and \eqref{eq:curved-exact} with $|w|\leq h$, $0<r^{n-1}/J\leq1$, and $\cE_p,\cE_q\geq0$ gives $\tilde F\geq F_1\geq0$, with strict inequality since $u=v=r/2$ is impossible here. Suppose $u,v>0$. Regard $\tilde\nu_p,\tilde\nu_q$ as vectors in $\R^{n-1}$ through the parallel frame, put $w_0\coloneqq\langle\tilde\nu_p,\tilde\nu_q\rangle$, so that $|w_0|\leq h$, and apply Proposition~\ref{thm:joint} with
$$
\CC\coloneqq\begin{pmatrix}
c_q(I-\tilde\nu_p\tilde\nu_p^T)&2buv\,\tilde\nu_p\tilde\nu_q^T\\
2buv\,\tilde\nu_q\tilde\nu_p^T&c_p(I-\tilde\nu_q\tilde\nu_q^T)
\end{pmatrix}.
$$
Since $|\tilde\nu_p|^2=1-u^2$ and $|\tilde\nu_q|^2=1-v^2$, the quadratic form of $\CC$ at $(s,s')$ is $c_q(|s|^2-\langle\tilde\nu_p,s\rangle^2)+c_p(|s'|^2-\langle\tilde\nu_q,s'\rangle^2)+4buv\langle\tilde\nu_p,s\rangle\langle\tilde\nu_q,s'\rangle$, and $\CC\geq0$ if and only if $c_pc_q\geq4b^2(1-u^2)(1-v^2)$. The difference of the two sides is increasing in $a$, and at $a=2b$ equals $b^2(2u^2+2v^2-3u^2v^2)\geq0$; so $\CC\geq0$. By \eqref{eq:M-block} and $\tr_\Gamma\cE_p=\tr\bcE_p-\langle\bcE_p\tilde\nu_p,\tilde\nu_p\rangle$,
$$
\tr(\CC\M)=r\big(c_q\tr_\Gamma\cE_p+c_p\tr_\Gamma\cE_q\big)+4buv(w_0-w).
$$
Hence \eqref{eq:curved-exact} and \eqref{eq:joint} give
\begin{multline*}
\tilde F=L_0-4buvw_0+\tr(\CC\M)+\frac{r^{n-1}}{J}\big(v\phi(u)+u\phi(v)\big)\\
\geq\frac{r^{n-1}}{J}F_1+\Big(1-\frac{r^{n-1}}{J}\Big)\big(L_0-4buvw_0+\lambda(\CC)\big),
\end{multline*}
where we used $-4buvw_0\geq-4buvh$ in the term multiplied by
$r^{n-1}/J$. To bound $L_0-4buvw_0+\lambda(\CC)$, fix a unit
vector $\xi$ and let $\xi_p,\xi_q$ be the norms of the components of $\tilde\nu_p,\tilde\nu_q$ orthogonal to $\xi$, so that $\langle U\xi,\xi\rangle=c_q(u^2+\xi_p^2)$, $\langle V\xi,\xi\rangle=c_p(v^2+\xi_q^2)$, $\langle W\xi,\xi\rangle=2buv\langle\tilde\nu_p,\xi\rangle\langle\tilde\nu_q,\xi\rangle$, and $w_0-\langle\tilde\nu_p,\xi\rangle\langle\tilde\nu_q,\xi\rangle\leq\xi_p\xi_q\leq h\leq h_0$. Since $\sqrt{(u^2+\xi_p^2)(v^2+\xi_q^2)}\geq uv+\xi_p\xi_q$, the quantity in \eqref{eq:lambda-C} minus $4buvw_0$ is at least
$$
2\sqrt{c_pc_q}\,uv+\big(2\sqrt{c_pc_q}-4buv\big)\xi_p\xi_q
\geq\min\big\{\chi uv,\ \chi(uv+h_0)-4buvh_0\big\},
$$
for any $0<\chi\leq2\sqrt{c_pc_q}$, by minimizing the affine function of $\xi_p\xi_q\in[0,h_0]$. As $uv+h_0=1-(u-v)^2/2$ and $L_0-4buvh_0=L$, we conclude, with $\chi$ as in \eqref{eq:J-def},
\begin{equation}\label{eq:joint-scalars}
\tilde F\geq\frac{r^{n-1}}{J}F_1+\Big(1-\frac{r^{n-1}}{J}\Big)\min\{\mathcal J_1,\mathcal J_2\}.
\end{equation}
Since $\mathcal J_1,\mathcal J_2>0$, $\tilde F\geq0$, and equality forces $J=r^{n-1}$ and $F_1=0$, hence $u=v=r/2$; then \eqref{eq:curved-exact} with $|w|\leq h$ gives $\tilde F\geq F_1+r(c_q\tr_\Gamma\cE_p+c_p\tr_\Gamma\cE_q)$, and both traces vanish. Thus these choices define calibrations in dimensions $8$ and
$9$, which completes the proof of Proposition~\ref{prop:existence},
and hence, by Theorem~\ref{thm:general-reduction}, of
Theorems~\ref{thm:main} and~\ref{thm:CMC}.

\appendix
\section{Explicit Calibrations}\label{app:calibrations}
Here we list the calibrations used in the proof of Proposition~\ref{prop:existence} for $n\in\{4,6,7,8,9\}$. 
For every calibration, $F$ must vanish on the chords of
$\Sph^{n-1}$. Indeed, \eqref{eq:flat-strategy} and
\eqref{eq:mass-general} give
$\int_{\Sph^{n-1}\times\Sph^{n-1}}F/r^{n-1}=0$,
while \eqref{eq:curved-condition} gives $F\geq0$.
On these chords, $r=2t$, $u=v=t$, and $w=1-t^2$.
With $\tilde c(t)\coloneqq c(2t,t)$, this vanishing condition is
equivalent to
\begin{equation}\label{eq:sphere-Q}
\phi(t)=(n-1)t\tilde c(t)-(1-t^2)\tilde c'(t)
-2^{n-2}\kappa\,t^{n-2},
\qquad0<t\leq1 .
\end{equation}
So $a,b$ determine $c$ and all the dependence of $\tilde F$ on
$\psi$, since only $\partial_r\psi=r^{2-n}c$ enters the construction.
Once $\kappa$ is chosen, \eqref{eq:sphere-Q} determines $\phi$ on
$(0,1]$. The remaining choice is its extension to $[-1,0)$.
The mass condition \eqref{eq:mass-general} then follows automatically.
Indeed, multiplying \eqref{eq:sphere-Q} by
$(1-t^2)^{(n-3)/2}$ and rearranging gives
$
((1-t^2)^{(n-1)/2}\tilde c(t))'
=-(1-t^2)^{(n-3)/2}
\big(\phi(t)+2^{n-2}\kappa t^{n-2}\big).
$
Since $\tilde c(0)=C_\psi$ and
$(1-t^2)^{(n-1)/2}\tilde c(t)\to0$ as $t\uparrow1$, integration gives
$$
C_\psi-I_\phi
=2^{n-2}\kappa\int_0^1
t^{n-2}(1-t^2)^{(n-3)/2}\,dt
=\frac{\kappa|\Sph^{n-1}|}{2|\Sph^{n-2}|},
$$
which is \eqref{eq:mass-general}. 
Thus constructing
a calibration reduces to choosing $a,b,\phi,\kappa$ satisfying
\eqref{eq:sphere-Q} and \eqref{eq:curved-condition}. The weight
$\psi$ is then obtained from \eqref{eq:angular-potential}, and the
mass condition requires no separate verification.

\subsection{Exact polynomial verification}\label{app:exact-certificates}
Some of the inequalities below reduce to polynomial inequalities on compact
boxes. A polynomial whose Bernstein coefficients on a box are all
nonnegative is nonnegative on that box; the boxes, obtained by
recursive subdivision when needed, together with their coefficient
lists, form a \emph{certificate of positivity}, which can be checked
directly \cite{powers2021,boudaoud-caruso-roy2008}. The accompanying
Mathematica notebook
\cite{chen-ghomi-wang-calibrations} reconstructs the polynomials
directly from the weights displayed below and verifies all $75$
certificates in dimensions $4,6,7,8,9$ in exact rational arithmetic.
It also records the coordinate charts, denominator clearings, and
coefficient data used in the certificates. No numerical tolerance
is involved.

\subsection{Two comparison lemmas}\label{app:comparison-lemmas}
For the calibration computations below, put $c_p=c(r,u)$ and
$c_q=c(r,v)$, and let $G(r,u,v)$ be as in \eqref{eq:G-def},
now for $u,v\geq0$. The first lemma reduces
the weighted inequality to equal angles; the second extends the
short-chord inequalities to all chord lengths.

\begin{lemma}\label{lem:equal-angle-comparison}
Let $n\geq3$ and fix $r>0$. Suppose that $a>b\geq0$ and $(n-1)b-rb'>0$,
where all coefficients are evaluated at $r$. If $\delta\in\R$ satisfies
$$
\frac{(n-2)a-ra'}2+b-\frac{rb'}2-\frac{nb-rb'}8
-\frac{(n-1)^2b^2r^2}{16((n-1)b-rb')}
-\frac{2b^2}{a-b}\geq\delta,
$$
then, for $0\leq u,v\leq1$,
$$
G(r,u,v)\geq G\left(r,\frac{u+v}{2},\frac{u+v}{2}\right)
+\delta(u-v)^2.
$$
\end{lemma}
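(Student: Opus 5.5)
The plan is to pass to mean and half-difference variables and show that every piece of $G(r,u,v)-G(r,s,s)$ is $d^2$ times a quantity bounded below by $4\delta$. Put $s\coloneqq(u+v)/2$ and $d\coloneqq(u-v)/2$, so that $u=s+d$, $v=s-d$, $(u-v)^2=4d^2$, and $0\le s\le1$, $|d|\le s$, $|d|\le 1-s$. Since $u,v\ge0$, we have $|uv|=uv$ and $L=L_0-4buvh_0$. Then $G$ is the sum of the symmetric polynomial $L$ in $(u,v)$ and the harmonic-mean term $4uvc_pc_q/(c_p+c_q)$, with $c_p=a-bu^2$ and $c_q=a-bv^2$. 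I will treat the two parts separately at fixed $r$ and $s$, as functions of $d$. Note that $G(r,s,s)$ is the value at $d=0$.

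\emph{Step 1: the non-polynomial term.} Use the identity
$$
\frac{4c_pc_q}{c_p+c_q}=(c_p+c_q)-\frac{(c_p-c_q)^2}{c_p+c_q}.
$$
Here $c_p-c_q=-b(u^2-v^2)=-4bsd$ and $c_p+c_q\ge2(a-b)>0$. Since $uv\le1$ and $s\le1$, this gives
$$
\frac{4uvc_pc_q}{c_p+c_q}\ge uv(c_p+c_q)-\frac{8b^2}{a-b}\,d^2
=uv(c_p+c_q)-\frac{2b^2}{a-b}(u-v)^2.
$$
This produces the $-2b^2/(a-b)$ entry of the hypothesis. At $d=0$ the inequality is an equality, because $c_p=c_q$ there.

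\emph{Step 2: the polynomial remainder.} Set $P(u,v)\coloneqq L+uv(c_p+c_q)$. Then $P$ is a symmetric polynomial, hence a polynomial in $s$ and $d^2$. I will expand $P(s+d,s-d)-P(s,s)=d^2\big(\alpha(s)+\beta d^2\big)$ using $uv=s^2-d^2$, $u^2+v^2=2s^2+2d^2$, and $h_0=1-s^2-d^2$, and collect the coefficients from the terms $r\partial_rc$, $(n-1)(v(v-r)c_p+u(u-r)c_q)$, $-4buvh_0$, and $uv(c_p+c_q)$. I expect $\alpha(s)$ to be a quadratic in $s$ of the form
$$
\alpha(s)=2\big((n-2)a-ra'\big)+4b-2rb'-4\big((n-1)b-rb'\big)s^2+(\text{linear term in } s \text{ proportional to } (n-1)br),
$$
up to bookkeeping of constants. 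The linear term comes from the $-(n-1)r(vc_p+uc_q)$ part. I also expect $\beta$ to be a multiple of $nb-rb'$, arising from the quartic terms.

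\emph{Step 3: the lower bound.} The $s$-dependent part of $\alpha$ is a concave or convex quadratic in $s$. The hypothesis $(n-1)b-rb'>0$ is exactly what is needed to complete the square in $s$, rewriting that part as a negative multiple of $((n-1)b-rb')\big(s-\tfrac{(n-1)br}{8((n-1)b-rb')}\big)^2$ minus a remainder. I would then check that the minimum of the resulting expression over the admissible range produces the term $-\frac{(n-1)^2b^2r^2}{16((n-1)b-rb')}$ appearing in the hypothesis, possibly after using $s\le1$ to control the square. The quartic part $\beta d^2$ is handled with $d^2\le 1/4$ (from $|d|\le\min\{s,1-s\}$), which yields the $-(nb-rb')/8$ term. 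Dividing the total by $(u-v)^2=4d^2$, the hypothesis gives $P(s+d,s-d)-P(s,s)\ge\big(\delta+\tfrac{2b^2}{a-b}\big)(u-v)^2$. Combining this with Step 1 proves the claim.

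The main obstacle is Step 2. The coefficient bookkeeping must reproduce the constants of the hypothesis exactly: the factors $\tfrac12$, $\tfrac18$, and $\tfrac1{16}$, and the sign with which the $s$-quadratic and the $d^2$ term enter. If a bound such as $s\le1$ or $d^2\le1/4$ has to be used in the opposite direction for some sign pattern of $a',b'$, I would instead bound those terms by the extreme values of $s$ and $d$ on the triangle $|d|\le\min\{s,1-s\}$.
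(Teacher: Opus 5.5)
Your route is the paper's: split off the harmonic-mean term via $4c_pc_q/(c_p+c_q)=(c_p+c_q)-(c_p-c_q)^2/(c_p+c_q)$, then expand the symmetric polynomial remainder in mean and difference variables, complete the square in the mean, and bound the quartic term using $(u-v)^2\le1$. Your Step 1 is correct and gives exactly the $-2b^2/(a-b)$ entry.

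The gap is that Step 2, which carries the whole content of the lemma, is only conjectured, and the conjecture is wrong where it matters. You predict a coefficient $-4((n-1)b-rb')$ for $s^2$, and then describe Step 3 as rewriting the $s$-part as a \emph{negative} multiple of a square. With that sign the $s$-part is concave, so completing the square gives an upper bound, not a lower bound. Its minimum over $0\le s\le1$ would then sit at an endpoint. It could never produce $-\frac{(n-1)^2b^2r^2}{16((n-1)b-rb')}$, which is the vertex value of a \emph{convex} quadratic. Your fallback of evaluating at extreme points of the triangle would not reproduce the stated hypothesis either.

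Carrying out the expansion of the terms $r\,\partial_r c$, $(n-1)(v(v-r)c_p+u(u-r)c_q)$, $-4buvh_0$ and $uv(c_p+c_q)$, with $uv=s^2-d^2$ and $u^2+v^2=2s^2+2d^2$, gives
\[
P(s+d,s-d)-P(s,s)=d^2\Big(2\big((n-2)a-ra'\big)+4b-2rb'+4\big((n-1)b-rb'\big)s^2-2(n-1)br\,s-2(nb-rb')d^2\Big).
\]
After dividing by $(u-v)^2=4d^2$, the $s$-part is $((n-1)b-rb')s^2-\tfrac{(n-1)br}{2}s$. This is convex precisely because $(n-1)b-rb'>0$. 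Its minimum over all real $s$ is $-\frac{(n-1)^2b^2r^2}{16((n-1)b-rb')}$, attained at $s=\frac{(n-1)br}{4((n-1)b-rb')}$, so no use of $s\le1$ is needed.

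For the quartic term you also need the sign $nb-rb'=((n-1)b-rb')+b>0$, which follows from the hypotheses. Only then can you apply $d^2\le\frac14$ to get $-\frac{nb-rb'}{2}d^2\ge-\frac{nb-rb'}{8}$.

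With these corrections your Steps 1--3 combine to the stated inequality. The result is exactly the paper's proof, written in $(s,d)$ instead of $(\sigma,\tau)=(2s,2d)$.
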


\begin{proof}
Put $\sigma=u+v$ and $\tau=u-v$. The identity
$$
\frac{4uvc_pc_q}{c_p+c_q}
=uv\left(2a-b(u^2+v^2)
-\frac{b^2(u^2-v^2)^2}{2a-b(u^2+v^2)}\right)
$$
and expansion of $L$ give
\begin{multline*}
G(r,u,v)-G(r,\sigma/2,\sigma/2)=\tau^2\Bigg(\frac{(n-2)a-ra'}2+b-\frac{rb'}2\\
+\frac{(n-1)b-rb'}4\sigma^2
-\frac{(n-1)br}{4}\sigma-\frac{nb-rb'}8\tau^2
-\frac{b^2uv\sigma^2}{2a-b(u^2+v^2)}\Bigg).
\end{multline*}
Completing the square in $\sigma$ bounds its two terms below by
$-(n-1)^2b^2r^2/(16((n-1)b-rb'))$. Also $nb-rb'>0$,
$\tau^2\leq1$, $uv\sigma^2\leq4$, and
$2a-b(u^2+v^2)\geq2(a-b)$. These bounds and the hypothesis prove
the assertion, including $\tau=0$.
\end{proof}

\begin{lemma}\label{lem:tangent-continuation}
Let $n\geq3$, and extend $a,b\in \C^1([0,2])$ by their tangent lines
at $2$. Suppose that
\begin{gather*}
a(2)>b(2)\geq0,\qquad a'(2)\geq b'(2)\geq0,
\quad\kappa2^{n-3}=a(2)-b(2)+a'(2)-b'(2),\\ 
2a'(2)+(n-1)a(2)\geq3(n-1)b(2)+6n b'(2),\quad
\kappa(n-2)2^{n-4}>2a'(2).
\end{gather*}
Put $\eta\coloneqq(n-1)(\kappa(n-2)2^{n-4}-2a'(2))>0$.
Then, for $r\geq2$ and $|u|,|v|\leq1$,
$$
F_0(r,u,v)\geq F_0(2,u,v)+\eta(r-2)^2.
$$
For $0\leq u,v\leq1$, one also has
$$
G(r,u,v)\geq G(2,u,v)+\eta(r-2)^2.
$$
\end{lemma}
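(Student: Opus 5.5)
The plan is to fix $u,v$ and study $g(r)\coloneqq F_0(r,u,v)-F_0(2,u,v)$ as a one-variable function on $[2,\infty)$, proving $g(2)=0$, $g'(2)\geq0$, and $g''\geq2\eta$. Taylor's formula with integral remainder then gives $g(r)\geq\eta(r-2)^2$.

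\emph{Structure for $r\geq2$.} On $[2,\infty)$ we have $a(r)=a(2)+a'(2)(r-2)$ and $b(r)=b(2)+b'(2)(r-2)$. So $c(r,t)=a-bt^2$ is affine in $r$ and $\partial_rc(r,t)=a'(2)-b'(2)t^2\eqqcolon c'_t$ is constant. The angular weight $\phi$ does not depend on $r$. Inspecting \eqref{eq:L}, $L_0-\kappa r^{n-1}$ is therefore a polynomial of degree at most $2$ in $r$. Its only quadratic contribution is $-(n-1)r\,(v\,c(r,u)+u\,c(r,v))$. The term $-4b|uv|h_0$ is affine in $r$. Hence
$$
g''(r)=\kappa(n-1)(n-2)r^{n-3}-2(n-1)\big(v\,c'_u+u\,c'_v\big).
$$
Since $a'(2)\geq b'(2)\geq0$, we have $0\leq c'_t\leq a'(2)$ for $|t|\leq1$, so $|v\,c'_u+u\,c'_v|\leq2a'(2)$. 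Using $r^{n-3}\geq2^{n-3}$ then gives
$$
g''\geq(n-1)\big(\kappa(n-2)2^{n-3}-4a'(2)\big)=2\eta,
$$
which is positive by the last hypothesis.

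\emph{First derivative at $r=2$.} Differentiating gives
\begin{multline*}
g'(2)=\kappa(n-1)2^{n-2}+c'_u(1-v^2)+c'_v(1-u^2)+(n-1)(v^2c'_u+u^2c'_v)\\
-(n-1)\big(v\,c(2,u)+u\,c(2,v)\big)-2(n-1)(v\,c'_u+u\,c'_v)-4b'(2)|uv|h_0 .
\end{multline*}
At $u=v=1$ this vanishes exactly because of the normalization $\kappa2^{n-3}=a(2)-b(2)+a'(2)-b'(2)$; this is the expected extremal configuration, since the inequality is tight along the continuation of the sphere chords. I would prove $g'(2)\geq0$ on $[-1,1]^2$ as follows. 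First I would substitute the value of $\kappa$, so that $g'(2)$ becomes a polynomial in $u,v$ (of degree at most $4$, with $h_0=1-(u^2+v^2)/2$) whose coefficients are linear in $a(2),b(2),a'(2),b'(2)$. Next I would observe that the terms odd in $u$ or $v$ are minimized when $u,v\geq0$, since the negative terms carry factors $v$ or $u$; this reduces the problem to $0\leq u,v\leq1$. On that square I would write $u=1-s$, $v=1-s'$ and expand around the corner $(1,1)$. The linear coefficients in $s,s'$ should be $\tfrac12\big(2a'+(n-1)a-3(n-1)b-6nb'\big)$ up to a positive factor, which is nonnegative by the displayed hypothesis. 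The higher-order terms would then be controlled using $a>b\geq0$ and $a'\geq b'\geq0$. This verification is the step I expect to be the main obstacle. If the direct expansion does not close, I would fall back on monotonicity in each of $u$ and $v$ separately on $[0,1]$, using concavity in each variable.

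\emph{The inequality for $G$.} Here $G=L+4uvc_pc_q/(c_p+c_q)$ with $u,v\geq0$. The new term $2uv\cdot\frac{2c_pc_q}{c_p+c_q}$ is a harmonic mean of two positive affine functions of $r$. It is concave, with second derivative
$$
-\frac{4uv\,(c'_uc_q-c'_vc_p)^2}{(c_p+c_q)^3}.
$$
I would repeat the argument above. The value and first derivative at $r=2$ are treated exactly as for $F_0$: at $u=v=1$ the harmonic-mean term reduces to $2c$, which replaces the $\phi$ contribution, and the same corner expansion applies. For the second derivative, the numerator $(c'_uc_q-c'_vc_p)^2$ involves only $b'(2)$, $b$, and differences $u^2-v^2$. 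I would bound the concavity defect below by a multiple of $b'(2)^2$ times quantities controlled through $c_p,c_q\geq a-b>0$, and show that the surplus in the $F_0$-type estimate, coming from $c'_t<a'(2)$ whenever $b'>0$, absorbs it. This leaves $G''\geq2\eta$ and the same Taylor conclusion.
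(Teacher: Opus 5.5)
Your plan for $G$ does not work: the bound $\partial_r^2G\geq2\eta$ you intend to prove is false under the hypotheses of the lemma. On the affine branch, the harmonic-mean term $f\coloneqq4uv\,c_pc_q/(c_p+c_q)$ satisfies
\[
\partial_r^2f=-\frac{8uv\,\big(c_q\,\partial_rc_p-c_p\,\partial_rc_q\big)^2}{(c_p+c_q)^3},
\qquad
c_q\,\partial_rc_p-c_p\,\partial_rc_q=(u^2-v^2)\big(a'(2)b(2)-a(2)b'(2)\big).
\]
So the defect is not governed by $b'(2)$ alone, and it survives when $b'(2)=0$.

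For example, take $n=6$, $a(2)=1+\delta$, $b(2)=1$, $a'(2)=5$, $b'(2)=0$, $\kappa=(5+\delta)/8$. These satisfy every hypothesis, with $\eta=10\delta$. Yet at $r=2$, $u=1$, $v=\tfrac12$ one has $L_{rr}=25+20\delta$, while $\partial_r^2f\to-\tfrac{400}{3}$ as $\delta\to0$. Hence $G_{rr}<0$ there, and no surplus can absorb the defect.

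The idea you are missing is that only monotonicity of $f$ is needed. For $r\geq2$, $c_p$ and $c_q$ are positive with slopes $a'(2)-b'(2)t^2\geq0$, and $xy/(x+y)$ is increasing in each positive argument. So $f$ is nondecreasing in $r$ when $u,v\geq0$. Since $\phi$ is independent of $r$, this gives directly $G(r,u,v)-G(2,u,v)\geq L(r,u,v)-L(2,u,v)=F_0(r,u,v)-F_0(2,u,v)\geq\eta(r-2)^2$. No second derivative of $G$ enters; this is how the paper argues.

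For $F_0$, your skeleton ($g(2)=0$, $g'(2)\geq0$, $g''\geq2\eta$, integrate twice) is the paper's. Your computation of $g''$ and its lower bound are exactly right, and the parity reduction of $g'(2)\geq0$ to $u,v\geq0$ is valid. But $g'(2)\geq0$ itself is not established. A corner expansion at $(1,1)$ is only local; its linear coefficient is actually $2a'+(n-1)a-3(n-1)b-(2n+4)b'$, all evaluated at $2$. The fallback via concavity also fails: for $u,v\geq0$, $\partial_u^2L_r(2,u,v)$ contains $2(n-2)\partial_rc(2,v)+2(n-1)b(2)v$ and is nonnegative, so $L_r(2,\cdot,v)$ is convex, not concave.

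What closes the step is a uniform bound on $\partial_uL_r(2,u,v)$ on each sign sector, built term by term:
\begin{itemize}
\item the terms containing $\partial_u\partial_rc(2,u)=-2b'u$ combine to $-2b'u\,[1-v^2+(n-1)v(v-2)]$, of size at most $6(n-1)b'$;
\item the terms containing $\partial_rc(2,v)$ combine to $\partial_rc(2,v)\,\big(2(n-2)u-2(n-1)\big)\leq-2(a'-b')$;
\item the terms from $c(2,\cdot)$ give $-(n-1)a+(n-1)b(2uv+v^2)\leq-(n-1)a+3(n-1)b$;
\item the $|uv|h_0$ term contributes at most $4b'$.
\end{itemize}
Hence $\partial_uL_r(2,u,v)\leq-2a'-(n-1)a+3(n-1)b+6nb'\leq0$ on $[-1,1]^2$; this is why the hypothesis carries the constant $6n$. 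The same holds in $v$, and therefore $L_r(2,u,v)\geq L_r(2,1,1)=0$.
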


\begin{proof}
The continuation preserves $b\geq0$, $a-b>0$, and
$0\leq \partial_r c(r,u)=a'(2)-b'(2)u^2\leq a'(2)$ for $|u|\leq1$.
Since the degree term is independent of $r$, it suffices to estimate
$L$. Radial derivatives at $2$ are taken from the affine branch.
The second hypothesis gives $L_r(2,1,1)=0$. On each angular sign
sector, differentiate $L_r(2,u,v)$ and use
$$
\left|1-v^2+(n-1)v(v-2)\right|\leq3(n-1),\qquad
\left|\partial_u\left[|uv|\left(1-\frac{u^2+v^2}{2}\right)\right]\right|\leq1.
$$
Together with $a'(2)\geq b'(2)\geq0$, these give
$$
\partial_u L_r(2,u,v)
\leq-2a'(2)-(n-1)a(2)+3(n-1)b(2)+6n b'(2)\leq0.
$$
The same holds in $v$. Continuity across the coordinate axes then
implies $L_r(2,u,v)\geq L_r(2,1,1)=0$ on $[-1,1]^2$.
Since the continued weights are affine,
$$
L_{rr}=\kappa(n-1)(n-2)r^{n-3}
-2(n-1)\big(v\,\partial_r c(2,u)+u\,\partial_r c(2,v)\big)\geq2\eta
\qquad(r\geq2).
$$
Integrating twice proves the assertion for $F_0$. Finally,
$4xy/(x+y)$ is increasing in each positive argument, and
$c(r,u),c(r,v)$ are nondecreasing in $r$. Its contribution to $G$
is therefore nondecreasing for $u,v\geq0$, proving the second assertion.
\end{proof}

\subsection{Dimension 4}\label{app:four-calibration}
Take $\kappa=17/16$, 
$$
\begin{aligned}
a(r)&=1+\frac38r^2-\frac{19}{64}r^3+\frac7{15}r^4
             -\frac{45}{128}r^5+\frac{119}{1440}r^6,\\
b(r)&=\frac{92}{225}-\frac{23}{16}r+\frac{153}{50}r^2
             -\frac{69}{32}r^3+\frac{17}{36}r^4,
\end{aligned}
\qquad 0\leq r\leq2,
$$
$$
a(r)=\frac{441r+292}{720},\qquad
b(r)=\frac{139r+8}{3600},\qquad r\geq2,
$$
and
$$
\phi(t)=\frac{184}{225}t_+-\frac{23}{4}t_+^2+\frac{11047}{450}t_+^3
 -27t_+^4-\frac{1486}{75}t_+^5+48t_+^6-\frac{102}{5}t_+^7,
$$
where $t_+\coloneqq\max\{t,0\}$.
The long branches are the tangent-line continuations at $r=2$, so
$a,b\in \C^1([0,\infty))$. Direct substitution of the data above into
\eqref{eq:sphere-Q}, with $n=4$, verifies it exactly, so
\eqref{eq:mass-general} follows.
The certificates, together with
positivity of the values and slopes of the long branches, give
$$
b(r)>0,\qquad a(r)-b(r)>0\quad(r\geq0),\qquad
\frac{\phi(t)}t>0\quad(0\leq t\leq1),
$$
where the quotient at $0$ means its polynomial extension.  Hence
$c(r,t)>0$ for $|t|\leq1$, while $\phi\geq0$ and $\phi=0$ on
$[-1,0]$.
For $0<r\leq2$ and $0\leq u,v\leq1$, the certificates give
\begin{equation*}
F_0(r,u,v)\geq\frac13(r-u-v)^2+\frac1{64}(u-v)^2,
\end{equation*}
so the positive short-chord sector has precisely the spherical equality set.
They also give, for nonnegative $u,v$,
$$
F_0(r,u,-v),\ F_0(r,-u,-v)
\geq \frac1{100}\max\{r/2,u,v\}^2,
$$
so the mixed and nonpositive short-chord sectors are strictly positive.
For the weighted inequality, the notebook verifies the hypotheses of
Lemma~\ref{lem:equal-angle-comparison} with $\delta=1/2$ and gives
$
G(r,t,t)\geq\frac{41}{225}\max\{r/2,t\}^2,
$
for $0<r\leq2,\ 0\leq t\leq1$.
Consequently,
$$
G(r,u,v)\geq\frac{41}{225}\max\{r/2,(u+v)/2\}^2
+\frac12(u-v)^2>0.
$$

For $r\geq2$, the displayed endpoint data satisfy
Lemma~\ref{lem:tangent-continuation}. The stationarity relation holds
exactly, the difference between the two sides of its third hypothesis
is $179/40$, and $\eta=27/10$. Thus the short-chord inequalities
extend to all $r\geq2$, with strict positivity of $F_0$ for $r>2$ and
of $G$ on the nonnegative angle sector. This proves
\eqref{eq:scalar-conditions-F} and the positivity of $G$ for $n=4$.

\subsection{Dimension 6}\label{app:six-calibration}
Take $\kappa=5/4$,
$$
\begin{aligned}
a(r)&=1+\frac58r^2-\frac5{24}r^3+\frac{81}{76}r^4
                    -\frac56r^5+\frac{243}{1216}r^6,\\
b(r)&=\frac6{19}-\frac52r+\frac{27}{4}r^2
                    -5r^3+\frac98r^4,
\end{aligned}
\qquad 0\leq r\leq2,
$$
$$
a(r)=\frac{331}{57}r-\frac{251}{38},\qquad
b(r)=\frac r2-\frac{13}{19},\qquad r\geq2,
$$
$$
\phi(t)=\frac{12}{19}t_+-10t_+^2+\frac{2093}{38}t_+^3-60t_+^4
 -\frac{1107}{19}t_+^5+\frac{400}{3}t_+^6-\frac{1089}{19}t_+^7.
$$
The long branches are the tangent-line continuations at $r=2$, so
$a,b\in \C^1([0,\infty))$. Direct substitution, with $n=6$, verifies
\eqref{eq:sphere-Q}, so \eqref{eq:mass-general} follows.
The certificates, together with
positivity of the values and slopes of the long branches, give
\begin{equation*}
b(r)>0,\qquad a(r)-b(r)>0\quad(r\geq0),\qquad
\frac{\phi(t)}t>0\quad(0\leq t\leq1).
\end{equation*}
At $t=0$ the last quotient means its polynomial extension.  Hence
$c(r,t)>0$ for $|t|\leq1$, while $\phi\geq0$ and $\phi=0$ on
$[-1,0]$.
For $0<r\leq2$ and $0\leq u,v\leq1$, the certificates give
\begin{equation*}
F_0(r,u,v)\geq\frac53(r-u-v)^2+\frac{121}{4560}(u-v)^2,
\end{equation*}
so the positive short-chord sector has precisely the spherical equality set.
The sign-sector charts and their certificates are
recorded in the notebook; they show that the mixed and nonpositive
short-chord sectors are strictly positive.

For the weighted inequality, the notebook verifies the hypotheses of
Lemma~\ref{lem:equal-angle-comparison} with $\delta=1$ and gives
$
G(r,t,t)\geq\frac{41}{76608}\max\{r/2,t\}^2
$
for $0<r\leq2,\ 0\leq t\leq1$.
Consequently,
$$
G(r,u,v)\geq\frac{41}{76608}\max\{r/2,(u+v)/2\}^2
+(u-v)^2>0.
$$
For $r\geq2$, apply Lemma~\ref{lem:tangent-continuation}.
The endpoint data satisfy the stationarity relation exactly;
the difference between the two sides of its third hypothesis is
$529/38$, and $\eta=2390/57$. Thus $F_0$ is strictly positive for
$r>2$ in every sign sector, and $G>0$ for $r\geq2$, $u,v\geq0$.
At $r=2$ the only zero of $F_0$ is $u=v=1$, already determined by
the short-chord argument. This proves
\eqref{eq:scalar-conditions-F} and the positivity of $G$ for $n=6$.

\subsection{Dimension 7}\label{app:seven-calibration}
Take $\kappa=165/128$ and
$$
a(r)=1+\frac34r^2+\frac{43}{128}r^4,\qquad
b(r)=\frac{r^2+2}{4},\qquad
\phi(t)=t_++\frac52t_+^3+\frac52t_+^5.
$$
Here $a,b$ are single polynomials, so no long-chord continuation is needed, and
$
a-b>0.
$
Direct substitution into \eqref{eq:sphere-Q}, with $n=7$, verifies
\eqref{eq:mass-general}. We record only the main reductions below; the
explicit expansions and positivity checks are given in the accompanying
Mathematica notebook \cite{chen-ghomi-wang-calibrations}.
For $u,v\geq0$, put $\sigma=u+v$ and $\tau=u-v$. Direct expansion gives
$$
128F_0=(r-\sigma)^2U_0+\tau^2V_0,
$$
where the explicit polynomials $U_0,V_0$ recorded in the notebook satisfy
$U_0>0$ and $V_0\geq270$. Thus $F_0\geq0$, with equality precisely when
$r=\sigma$ and $\tau=0$. In the mixed and nonpositive sign sectors, the
decompositions recorded there give $F_0>0$. Hence
\eqref{eq:scalar-conditions-F} holds.
For the weighted inequality, $6b-rb'=3+r^2>0$, and the elementary bounds
$$
\frac{2b^2}{a-b}\leq1,\qquad
\frac{(r^2+2)^2}{r^2+3}\leq r^2+\frac43
$$
inserted in Lemma~\ref{lem:equal-angle-comparison} give
$$
G(r,u,v)\geq G\left(r,\frac{u+v}{2},\frac{u+v}{2}\right)
+\left(\frac{25}{16}+\frac{25}{32}r^2+\frac7{256}r^4\right)(u-v)^2.
$$
It remains to consider equal angles. For $t=0$,
$G(r,0,0)=\kappa r^6+2ra'(r)>0$. For $t>0$, put $s=r/t$; then
\begin{equation*}
128G(r,t,t)=t^2\bigl(P_0(s)+t^2P_1(s)+t^4P_2(s)\bigr),
\end{equation*}
where
\begin{gather*}
P_0(s)=384(s-2)^2,\qquad
P_1(s)=8(s-2)(43s^3-58s^2-28s+40),\\
P_2(s)=3s^2q_4(s),\qquad
q_4(s)=55s^4-172s^3+86s^2+128s-64.
\end{gather*}
The notebook verifies that
$P_0(s)+t^2P_1(s)+t^4P_2(s)>0$
for all $s\geq0$ and $0<t\leq1$: elementary estimates
handle $0\leq s\leq3/5$ and $s\geq2$, while on $[3/5,2)$
positivity reduces to $q_4>0$ and
$
4P_0P_2-P_1^2=64(s-2)^2d_6(s)>0,
$
with $d_6$ and its certificate recorded there. Thus $G>0$
for $n=7$.

\subsection{Dimension 8}\label{app:eight-calibration}
Take $\kappa=1369/1000$,
$$
\begin{aligned}
a(r)&=1+\frac78r^2-\frac{483}{1480}r^3+\frac{20667}{11600}r^4
      -\frac{5724}{4625}r^5+\frac{231}{500}r^6,\\
b(r)&=\frac{1706}{3625}-\frac{1449}{370}r+\frac{10959}{1000}r^2
      -\frac{443}{50}r^3+\frac{322}{125}r^4,
\end{aligned}
\qquad 0\leq r\leq2,
$$
$$
\begin{aligned}
a(r)&=\frac{11491073}{214600}r^2-\frac{45037947}{268250}r
      +\frac{19040983}{134125},\\
b(r)&=\frac{19623}{1000}r^2-\frac{577757}{9250}r+\frac{38598}{725},
\end{aligned}
\qquad r\geq2,
$$
which are the quadratic Taylor continuations at $2$, and
$$
\phi(t)={}\frac{3412}{3625}t_+-\frac{2898}{185}t_+^2
 +\frac{642231}{7250}t_+^3-\frac{96352}{925}t_+^4
-\frac{715863}{7250}t_+^5+\frac{53224}{185}t_+^6-\frac{18928}{125}t_+^7.
$$
Thus $a,b\in \C^2([0,\infty))$. Direct substitution into
\eqref{eq:sphere-Q}, with $n=8$, verifies
it, so \eqref{eq:mass-general} follows, and 
$I_\phi=1-1369\pi/6400$. The certificates give
$$
\begin{gathered}
b>0,\qquad a-2b>0,\qquad a-b>\tfrac1{10}\quad(0\leq r\leq2),\\
\frac{\phi(t)}t>0,\qquad \phi(t)<100\quad(0\leq t\leq1).
\end{gathered}
$$
The notebook also records positive-coefficient expansions of the long
branches, which extend $b>0$ and $a>2b$ to $[0,\infty)$. Thus
$c(r,t)>0$ for $|t|\leq1$, and $\phi\geq0$ with $\phi=0$ on $[-1,0]$.
Throughout this subsection and the next, $u,v\geq0$ unless stated
otherwise, and we write
$$
S_0\coloneqq2-u^2-v^2=2h_0,\qquad
\Delta\coloneqq u^2-v^2,\qquad
D\coloneqq(r-u-v)^2+(u-v)^2,
$$
so that $h=\tfrac12\sqrt{S_0^2-\Delta^2}$ and $|\Delta|\leq S_0$.
Since $\sqrt{1-\zeta}\leq1-\zeta/2-\zeta^2/8$ for $0\leq\zeta\leq1$
(the square of the right side exceeds $1-\zeta$ by $\zeta^3/8+\zeta^4/64$),
\begin{equation*}
h\leq h_0-\frac{\Delta^2}{4S_0}-\frac{\Delta^4}{16S_0^3}\qquad(S_0>0),
\end{equation*}
where both corrections tend to zero as $S_0\to0$. Hence, by \eqref{eq:F-zero} and \eqref{eq:joint-scalars}, with
$\eps\coloneqq10^{-5}$, clearing the denominators as detailed in the
accompanying notebook reduces the required estimates to polynomial
inequalities on compact boxes. The certificates give
\begin{equation}\label{eq:reserve-eight}
F_1\geq\eps D,\qquad
\mathcal J_1\geq\eps(r^2+u^2+v^2),\qquad
\mathcal J_2\geq\eps
\qquad(0<r\leq5,\ 0\leq u,v\leq1).
\end{equation}
The first inequality extends continuously to $u=v=1$. In particular $F_1$
vanishes only on unit-sphere chords, and $\mathcal J_1,\mathcal J_2>0$ for
$u,v>0$.

For the mixed sector write $u=x\geq0$, $v=-y\leq0$. Then $h\leq h_0$
gives $F_1(r,x,-y)\geq F_0(r,x,-y)$, and the accompanying notebook certifies
$F_0(r,x,-y)\geq\eps(r^2+x^2+y^2)$ on $0\leq r\leq5$, $x,y\in[0,1]$;
reversal of the endpoints covers the other mixed sector. For
$u=-x$, $v=-y$ the degree term vanishes, and the same tests give
$F_0(r,-x,-y)\geq\eps(r^2+x^2+y^2)$. Hence $F_1>0$ for $0<r\leq5$ off
the unit-sphere chords.

It remains to treat $r\geq5$, where the long branches apply. The long-branch coefficient expansions recorded in the notebook show
$a'(r)>0$ and $(a-b)'(r)>0$ for
$r\geq5$, hence $\partial_r c(r,t)>0$ for $|t|\leq1$, so the radial derivative
terms of $L$ are nonnegative, as are the terms $(n-1)(v^2c_p+u^2c_q)$.
Since $0<c\leq a$, the remaining terms of $L_0$ are at least
$\kappa r^7-14ra$. Moreover $4|uv|h\leq1$ and $4uvh_0\leq1$, by
$2|t|\sqrt{1-t^2}\leq1$ and $u^2+v^2\geq2uv$, while
$|v\phi(u)+u\phi(v)|\leq200$. Therefore $F_1$, $\mathcal J_1$, and
$\mathcal J_2$ are all bounded below by $\kappa r^7-14ra-b-200$.
Writing $a=A_2r^2+A_1r+A_0$ and $b=B_2r^2+B_1r+B_0$ on the long
branch, where $A_1,B_1<0$ and the other coefficients are positive,
division by $r^7$ gives, for $r\geq5$,
\begin{equation*}
\frac{\kappa r^7-14ra-b-200}{r^7}
\geq\kappa-14\Big(\frac{A_2}{5^4}+\frac{A_0}{5^6}\Big)
-\frac{B_2}{5^5}-\frac{B_0}{5^7}-\frac{200}{5^7}
>\frac3{100}.
\end{equation*}
Together with \eqref{eq:reserve-eight}, this proves
\eqref{eq:scalar-conditions-F} for $F_1$, and $\mathcal J_1,\mathcal J_2>0$ for $n=8$.

\subsection{Dimension 9}\label{app:nine-calibration}
Take
$$
a(r)=\frac{143r^6+279r^4+432r^2+432}{432},\qquad
b(r)=\frac{r^2(13r^2+16)}{24},\qquad
\kappa=\frac{1261}{864},
$$
$$
\phi(t)=\frac49t_+^3\big(21+38t_+^2-26t_+^4\big).
$$
Here $a,b$ are single polynomials. The accompanying exact-arithmetic
notebook verifies $a>2b>0$ for $r>0$; moreover $a'$ and $(a-b)'$ have
positive coefficients, so $\partial_r c(r,t)>0$ for $|t|\leq1$, $r>0$.
Direct substitution into \eqref{eq:sphere-Q}, with $n=9$, verifies it, so
\eqref{eq:mass-general} follows, and explicitly $I_\phi=629/1890$.
Also $\phi\geq0$, and $\phi(t)/t\leq44/3$ on $[0,1]$: with $q=t^2$, the
derivative of $q(21+38q-26q^2)$ is $21+76q-78q^2\geq19$, so its maximum is
attained at $q=1$.
With $S_0,\Delta,D$ as in Section~\ref{app:eight-calibration}, the
first-order bound $\sqrt{1-\zeta}\leq1-\zeta/2$ gives
\begin{equation*}
h\leq h_0-\frac{\Delta^2}{4S_0}\qquad(S_0>0).
\end{equation*}
With $\eps\coloneqq10^{-4}$, this bound and the
harmonic-mean choice in \eqref{eq:J-def}, after the denominator clearings
recorded in the accompanying notebook, reduce the required estimates to
polynomial inequalities on compact boxes. The certificates give
\begin{equation*}
F_1\geq\eps D,\qquad
\mathcal J_1\geq\eps\big[D+(u+v)^2\big],\qquad
\mathcal J_2\geq\eps\qquad(0<r\leq5,\ 0\leq u,v\leq1),
\end{equation*}
with continuous extension to $u=v=1$ in the first inequality.
For the mixed sector, the accompanying notebook certifies
$F_0(r,x,-y)\geq\eps[(r-x)^2+x^2]$ for $0\leq y\leq x\leq1$ and
$0\leq r\leq5$, which is positive for $r>0$. For $x\leq y$, the terms
of $L$ linear in $r$ add to $8r(y-x)(a+bxy)\geq0$, the radial derivative
terms are nonnegative, and with $q=xy$, $S=x^2+y^2\geq2q$, and
$\phi(x)\leq(44/3)x$, the remaining terms satisfy
$$
8aS-16bq^2-4bq\Big(1-\frac S2\Big)-y\phi(x)
\geq\Big(16(a-b)-\frac{44}3\Big)q\geq\frac43q\geq0 ,
$$
so that $F_0(r,x,-y)\geq\kappa r^8>0$. For $u=-x$, $v=-y$ the degree
term vanishes and the radial terms are nonnegative, so the same
estimate applies. Reversal covers the other mixed sector. Hence $F_1>0$
for $0<r\leq5$ off the unit-sphere chords.
For the tail estimate, note that
$$
v\phi(u)+u\phi(v)\geq-\frac{44}{3}
\qquad(-1\leq u,v\leq1).
$$
The term is nonnegative when $u,v\geq0$ and vanishes when
$u,v\leq0$; in a mixed sector its negative part is at most
$(44/3)|uv|\leq44/3$.
For $r\geq5$, the elementary bounds of
Section~\ref{app:eight-calibration}, now with $16ra$ in place of $14ra$
and $44/3$ in place of $200$, bound $F_1$, $\mathcal J_1$, and
$\mathcal J_2$ below by $\kappa r^8-16ra-b-44/3$. All coefficients of
$a,b$ are positive, so replacing each negative power of $r$ by its value
at $5$ gives
$$
\frac{\kappa r^8-16ra-b-44/3}{r^8}
>\frac3{10}\qquad(r\geq5).
$$
This proves \eqref{eq:scalar-conditions-F} for $F_1$, and $\mathcal J_1,\mathcal J_2>0$ for $n=9$.

\section*{Acknowledgement}
The AI tools Claude (Anthropic) and ChatGPT (OpenAI) were used in the
preparation of this manuscript. The authors have reviewed all AI-assisted
content and take full responsibility for the final manuscript.

\bibliography{references}

\end{document}